\documentclass[10pt,twoside]{amsart}
\usepackage{amsmath}
\usepackage{amsthm}
\usepackage{amsfonts}
\usepackage{amssymb}
\usepackage{latexsym}
\usepackage{mathrsfs}
\usepackage{amsmath}
\usepackage{amsthm}
\usepackage{amsfonts}
\usepackage{amssymb}
\usepackage{latexsym}
\usepackage{geometry}
\usepackage{dsfont}
\usepackage{enumitem}
\usepackage[dvips]{graphicx}
\usepackage{color}
\usepackage[all]{xy}
\usepackage[pagebackref]{hyperref}
\renewcommand*{\backref}[1]{}\renewcommand*{\backrefalt}[4]%{\ifcase #1 (\tt not cited)\or (\tt cited on page~#2)}

\theoremstyle{plain}
\newtheorem{theorem}{Theorem}[section]
\newtheorem{proposition}[theorem]{Proposition}
\newtheorem{lemma}[theorem]{Lemma}
\newtheorem{corollary}[theorem]{Corollary}
\theoremstyle{definition}
\newtheorem{example}[theorem]{Example}
\newtheorem{definition}[theorem]{Definition}

\theoremstyle{definition}

\theoremstyle{remark}
\newtheorem{remark}[theorem]{Remark}

\def\fkm{{\mathfrak m}}

\def\Max{{\rm Max}}
\def\hom{\operatorname{hom}}
\def\Hom{\operatorname{Hom}}

\def\Ext{{\rm Ext}}
\def\Supp{{\rm Supp}}
\def\Ann{{\rm Ann}}
\def\ann{{\rm Ann}}
\def\Mod{{\rm Mod}}
\def\GV{{\rm GV}}
\def\tor{{\rm tor}}
\def\tr{{\rm tr}}

\def\Im{{\rm Im}}
\def\coker{{\rm Coker}}
\def\E{{\rm E}}
\def\Spec{{\rm Spec}}

\begin{document}

\title[Relative Faithful Exact Functors]{Relative Faithful Exact Functors and Their Applications to Homological Modules}

\author [Zhang] {Xiaolei Zhang}
\address{(Zhang) School of Mathematics and Statistics,	Tianshui Normal University, Tianshui 741001, China}
\email{zxlrghj@163.com}
\author[Qiao]{Lei Qiao}
\address{(Qiao)  School of Mathematical Sciences, Sichuan Normal University,
	Chengdu, 610066, China}
\email{lqiao@sicnu.edu.cn}
\author [Kim] {Hwankoo Kim$^{\dag}$}
\address{(Kim) Division of Computer \& Information Engineering, Hoseo University, Asan 31499, Republic of Korea}
\email{hkkim@hoseo.edu}

\thanks{Key Words: $w$-faithfully exact functor; $w$-faithfully projective module; $w$-faithfully flat module; $w$-faithfully injective module.}

%\thanks{This research was supported by the National Natural Science Foundation of China (Grant No. 12101515).}

\thanks{$2020$ Mathematics Subject Classification: 18A22, 13C10, 13C11.}

\thanks{$\dag$ Corresponding Author}

\date{\today}

\begin{abstract}
The notions of faithfully projective, faithfully flat, and faithfully injective modules--defined as modules for which the three classical homological functors are both faithful and exact--play fundamental roles across various areas of algebra. In this paper, we extend these notions to the setting of $w$-operation theory. By introducing the concept of $w$-faithfully exact functors, we define and investigate the notions of $w$-faithfully projective, $w$-faithfully flat, and $w$-faithfully injective modules. We establish their fundamental properties and demonstrate their effectiveness in generalizing classical results.
\end{abstract}

\maketitle
			
\section{introduction}

Throughout this paper, all rings are commutative with identity and all modules are unitary.

The concept of a faithfully exact functor, introduced and systematically studied by Ishikawa in his seminal 1964 paper \cite{I64}, stands as a cornerstone of homological algebra. Ishikawa's work provided a unified framework for understanding various classes of modules--projective, flat, and injective--by characterizing them via the behavior of the associated $\Hom$ and tensor functors. A functor is said to be faithfully exact if it is exact and reflects exact sequences, a condition equivalent to being both exact and faithful (that is, it detects nonzero objects and nonzero morphisms).

Faithfully projective modules are projective modules $P$ over a ring $R$ for which the functor $\Hom_R(P,-)$ is faithful. They play a central role in Morita theory (see \cite{AF92}). Similarly, faithfully flat modules are flat modules $F$ over a ring $R$ such that the functor $F\otimes_R-$ is faithful. Faithfully flat descent has become indispensable in Gorenstein homological algebra, algebraic $K$-theory, and algebraic geometry (see \cite{CKL17,L25,M24}). 

Dually, faithfully injective modules are injective modules $E$ for which the contravariant functor $\Hom_R(-,E)$ is faithful. Such modules play a fundamental role in duality theories. For instance, over a Noetherian ring, Matlis duality establishes a contravariant equivalence between certain categories of modules via the $\Hom$ functor into a faithfully injective cogenerator (see \cite{EJ11}). For broader generalizations and further applications, we refer the reader to \cite{CH25}.

The $w$-operation, introduced by Wang and McCasland \cite{WM97}, represents a significant development in multiplicative ideal theory and its applications to module theory and homological algebra. As Zafrullah observed in \cite{Z00}, ``What is so nice about the $w$-operation is that it is smoother than the $t$-operation,'' and ``Smoothness of this sort can only mean one thing: that one can bring in much more homological algebra than with other star operations.'' 

The notions of generalized projective, flat, and injective modules within $w$-operation theory have been extensively introduced and studied by Kim and Wang. In 2014, they \cite{KM14-lcm} defined $w$-flat modules to be $R$-modules $F$ such that $w$-exact sequences are preserved under the tensor functor $-\otimes_R F$. Subsequently, in \cite{WK14}, they defined $w$-injective modules to be $R$-modules $E$ for which $w$-exact sequences are preserved under the functor $\Hom_R(-,L(E))$. In 2015, they \cite{WK15} introduced the notion of \textit{$w$-projective modules}, namely $R$-modules $P$ such that $\Ext^1_R(L(P),N)$ is $\GV$-torsion for all torsion-free $w$-modules $N$. 

A substantial body of literature shows that these homological notions in $w$-operation theory play central roles in the study of important classes of rings, such as Krull domains, P$v$MDs, $w$-Noetherian rings, and $w$-coherent rings (see \cite{BKW14,WK14,wq15,WQ19,fq19,WZ18,xw18,XW18,zxl20,zw21}).

Parallel to the development of faithfully exact functors, the theory of torsion theories and quotient categories has provided powerful tools for the study of module categories. The $w$-operation induces a closure operation on modules that is closely connected to the notion of $\GV$-ideals and to a hereditary torsion theory of finite type, denoted by $\tau_w=(\mathcal{T}(R),\mathcal{F}(R))$. Here, $\mathcal{T}(R)$ denotes the class of $\GV$-torsion modules and $\mathcal{F}(R)$ the class of $\GV$-torsion-free modules. 

This torsion theory gives rise to a reflexive full subcategory $(R,w)\text{-mod}$ consisting of all $w$-modules. This category is not only a Grothendieck category but also the heart of the quotient category 
\[
\mathsf{Q}_w(R)=\mathrm{Mod}(R)/\mathcal{T}(R),
\]
which is equivalent to $(R,w)\text{-mod}$. The reflector 
\[
L:\mathrm{Mod}(R)\longrightarrow (R,w)\text{-mod},
\quad 
L(M)=(M/\mathrm{tor}_{\GV}(M))_w,
\]
is exact and annihilates $\GV$-torsion modules.

The main motivation of this paper is to transplant Ishikawa's theory of faithfully exact functors from the category of all $R$-modules to the more refined setting of the $w$-module category. The central innovation is the introduction of the notion of a $w$-faithfully exact functor (Definition~\ref{def:w-faithful-functor}). A functor 
\[
G: \mathrm{Mod}(R) \longrightarrow \mathcal{A}
\]
into an abelian category $\mathcal{A}$ is said to be $w$-faithfully exact if it is exact, annihilates the class $\mathcal{T}(R)$ of $\GV$-torsion modules, and the induced functor 
\[
\overline{G}: \mathsf{Q}_w(R) \longrightarrow \mathcal{A}
\]
on the Serre quotient is faithfully exact. This definition captures the idea of exactness and faithfulness ``up to $\GV$-torsion,'' thereby making precise how such functors interact with the underlying $w$-structure.

The core contribution of the paper is the systematic development of the theory of $w$-faithfully exact functors and its application to the definition and characterization of three fundamental classes of modules within the $w$-framework:
\begin{itemize}
    \item \textbf{$w$-Faithfully Projective Modules (Section~4).}
    An $R$-module $P$ is defined to be $w$-faithfully projective if the functor 
    \[
    H_w(-)=\mathrm{hom}_w(P,-)=\mathrm{Hom}_R(L(P),L(-))
    \]
    is $w$-faithfully exact. We establish a comprehensive list of equivalent conditions, culminating in a $w$-analogue of Ishikawa's \cite[Theorem~1.1]{I64} (Theorem~\ref{thm:w-Ishi-22}). Among the key criteria are the nonvanishing of $\mathrm{hom}_w(P,R/\mathfrak m)$ for every maximal $w$-ideal $\mathfrak m$ and the local condition $P_{\mathfrak m}\neq 0$ for all $\mathfrak m\in w\text{-Max}(R)$. In particular, a finitely generated $w$-projective module $P$ is $w$-faithfully projective if and only if it satisfies these support conditions. Proposition~\ref{prop:w-Ishikawa24} provides a structural characterization: $P$ is $w$-faithfully projective if and only if $R$ is a $w$-direct summand of a finite direct sum of copies of $L(P)$. This yields a precise $w$-analogue of the classical characterization of a progenerator.

    \item \textbf{$w$-Faithfully Flat Modules (Section~5).}
    Using the $w$-tensor product $M\otimes_R^w N=L(M\otimes_R N)$, an $R$-module $M$ is defined to be $w$-faithfully flat if the functor 
    \[
    U_w(-)=(-)\otimes_R^w M
    \]
    is $w$-faithfully exact. Theorem~\ref{thm:w-def21-plus-full} establishes several equivalent characterizations paralleling the classical theory. Most notably, we obtain the following local criterion: a $w$-flat module $M$ is $w$-faithfully flat if and only if $M_{\mathfrak m}$ is faithfully flat over the local ring $R_{\mathfrak m}$ for every maximal $w$-ideal $\mathfrak m$. Thus, the global $w$-property reduces to a familiar local condition.

    \item \textbf{$w$-Faithfully Injective Modules (Section~6).}
    An $R$-module $E$ is called $w$-faithfully injective if the contravariant functor 
    \[
    V_w(-)=\mathrm{hom}_w(-,E)
    \]
    is $w$-faithfully exact. Theorem~\ref{thm:w-ishi-31} provides several equivalent characterizations, including criteria involving simple modules and residue fields. A particularly strong result (Corollary~\ref{cor:w-Ishikawa33}) shows that a $w$-injective module $E$ is $w$-faithfully injective if and only if its $w$-envelope $L(E)$ contains an isomorphic copy of every $\tau_w$-simple module. This is further equivalent to the condition that $L(E)$ has a direct summand isomorphic to the canonical module 
    \[
   Q=E_R\big(\bigoplus_{\mathfrak{m}} R/\mathfrak{m}\big).
    \]
    This connects the abstract functorial definition with concrete structural embeddings. Furthermore, Proposition~\ref{prop:w-Ishikawa36} establishes a perfect $w$-analogue of Ishikawa duality: an $R$-module $M$ is $w$-faithfully flat if and only if $\mathrm{hom}_w(M,E)$ is $w$-faithfully injective for every $w$-faithfully injective module $E$, and conversely.
\end{itemize}

\section{New Perspectives on $w$-Operation Theory}

Let $R$ be a commutative ring. A finitely generated ideal $J$ of $R$ is called a 
\emph{Glaz--Vasconcelos ideal} (briefly, a \emph{$\GV$-ideal}), denoted by 
$J\in \GV(R)$, if the canonical homomorphism
\[
\varphi_J:\ R\longrightarrow \Hom_R(J,R), 
\qquad 
r\longmapsto (j\mapsto rj),
\]
is an isomorphism.

For an $R$-module $M$, the \emph{$\GV$-torsion submodule} of $M$ is defined by
\[
\tor_{\GV}(M)
:=\{\,x\in M \mid \exists\,J\in \GV(R)\ \text{such that } Jx=0\,\}.
\]
Set
\[
F(M):=M/\tor_{\GV}(M).
\]
Then there is a short exact sequence of $R$-modules
\[
0\longrightarrow \tor_{\GV}(M)
\longrightarrow M
\longrightarrow F(M)
\longrightarrow 0.
\]

\begin{definition}\label{def:two-closures}
Let $R$ be a commutative ring.

\medskip
\noindent\textup{(1) (\cite[Definition 6.2.1]{WK24})}
If $M$ is $\GV$-torsion-free, we define its \emph{$w$-envelope} (or \emph{$w$-closure}) 
inside an injective hull by
\[
M_w
:=\big\{\,x\in \E(M)\ \bigm|\ \exists\,J\in \GV(R)\ 
\text{ such that } Jx\subseteq M\,\big\},
\]
where $\E(M)$ denotes the injective hull of $M$ as an $R$-module. 
This agrees with the standard definition of the $w$-envelope for $\GV$-torsion-free modules in the literature.

\medskip
\noindent\textup{(2)}
For an arbitrary $R$-module $M$, we define its \emph{extended $w$-envelope} by
\[
L(M)
:=\big\{\,x\in \E(F(M))\ \bigm|\ \exists\,J\in \GV(R)\ 
\text{ such that } Jx\subseteq F(M)\,\big\}
\subseteq \E(F(M)).
\]
Equivalently,
\[
L(M)=(F(M))_w.
\]
In particular, if $M$ is $\GV$-torsion-free, then $L(M)=M_w$.
\end{definition}

A $\GV$-torsion-free $R$-module $M$ is called a \emph{$w$-module} if $M_w=M$.
An ideal $I\lhd R$ is said to be a \emph{$w$-ideal} if it is a $w$-module as an $R$-module.
A ring $R$ is called a \emph{DW-ring} if every ideal of $R$ is a $w$-ideal; 
equivalently, $\GV(R)=\{R\}$.
The set of all maximal $w$-ideals, denoted by $w\text{-}\Max(R)$, 
is a subset of $\Spec(R)$.

It is straightforward to verify that a finitely generated ideal $J$ of $R$ 
is a $\GV$-ideal if and only if
\[
J\nsubseteq \mathfrak m 
\qquad \text{for every } \mathfrak m\in w\text{-}\Max(R).
\]

\begin{proposition}
\label{prop:envelope-equiv}
Let $M$ be a $\GV$-torsion-free $R$-module. For $x\in \E(M)$ the following
conditions are equivalent:
\begin{enumerate}\setlength\itemsep{0.35em}
\item[\textup{(1)}] $x\in M_w$, that is, there exists $J\in \GV(R)$ such that $Jx\subseteq M$.
\item[\textup{(2)}] For every $\mathfrak m\in w\text{-}\Max(R)$ one has
$x/1\in M_{\mathfrak m}\subseteq \E(M)_{\mathfrak m}$.
\item[\textup{(3)}] There exist $s_1,\dots,s_n\in R$ such that
$J:=(s_1,\dots,s_n)\in \GV(R)$ and $s_i x\in M$ for all $i=1,\dots,n$.
\end{enumerate}
In particular, via the diagonal embedding
\[
\E(M)\hookrightarrow \prod_{\mathfrak m\in w\text{-}\Max(R)}\E(M)_{\mathfrak m},
\]
one has
\[
M_w
=
\E(M)\ \cap\ \prod_{\mathfrak m\in w\text{-}\Max(R)} M_{\mathfrak m}
\subseteq
\prod_{\mathfrak m\in w\text{-}\Max(R)} \E(M)_{\mathfrak m}.
\]
\end{proposition}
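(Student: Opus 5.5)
The plan is to prove the cycle (1)$\Rightarrow$(2)$\Rightarrow$(3)$\Rightarrow$(1), and then read off the displayed description of $M_w$.

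\textbf{(1)$\Rightarrow$(2).} Let $J\in\GV(R)$ with $Jx\subseteq M$, and fix $\fkm\in w\text{-}\Max(R)$. By the criterion stated just before the proposition, $J\nsubseteq\fkm$. Choose $s\in J\setminus\fkm$. Then $sx\in M$, and in $\E(M)_{\fkm}$ we get $x/1=(sx)/s\in M_{\fkm}$. Here we use that localization is exact, so $M_{\fkm}\subseteq\E(M)_{\fkm}$.

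\textbf{(2)$\Rightarrow$(3).} This is the step carrying the content. Consider the ideal
\[
I:=(M:_R x)=\{r\in R\mid rx\in M\}.
\]
For each $\fkm\in w\text{-}\Max(R)$, the hypothesis $x/1\in M_{\fkm}$ gives $x/1=m/t$ with $t\notin\fkm$. Hence there is $u\notin\fkm$ with $u(tx-m)=0$ in $\E(M)$, so $ut\,x=um\in M$. Thus $ut\in I\setminus\fkm$, and $I\nsubseteq\fkm$ for every maximal $w$-ideal $\fkm$.

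The main obstacle is passing from this to a \emph{finitely generated} subideal avoiding all maximal $w$-ideals. I would use the standard fact that every proper ideal not contained in any maximal $w$-ideal has $I_w=R$. Concretely, one takes the $w$-closure: if $I_w\neq R$, then $I_w$ is a proper $w$-ideal and so lies in some maximal $w$-ideal, which is a contradiction. Then $1\in I_w$, which by definition of $w$-closure of an ideal means $J\cdot1\subseteq I$ for some $J\in\GV(R)$. Alternatively, one uses the finite-type nature: $I_w=\bigcup\{(K)_w : K\subseteq I \text{ f.g.}\}$, and the $t$/$w$ maximal ideals are of finite type. Either way we obtain $J=(s_1,\dots,s_n)\subseteq I$ with $J\in\GV(R)$, and each $s_ix\in M$.

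\textbf{(3)$\Rightarrow$(1)} is immediate, since $Jx\subseteq M$ follows from $s_ix\in M$ for all generators.

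For the final statement, the diagonal map $\E(M)\to\prod_{\fkm}\E(M)_{\fkm}$ is injective. Indeed, if $y\mapsto0$, then $\Ann(y)$ lies in no maximal $w$-ideal. By the same argument as above, $\Ann(y)$ contains a $\GV$-ideal, so $y\in\tor_{\GV}(\E(M))$. This torsion is zero, because $\E(M)$ is an essential extension of the $\GV$-torsion-free module $M$: a nonzero torsion element would have a nonzero multiple in $M$ that is again $\GV$-torsion. Given injectivity, the equality $M_w=\E(M)\cap\prod_{\fkm}M_{\fkm}$ is exactly (1)$\Leftrightarrow$(2).
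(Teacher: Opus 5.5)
Your proof is correct and follows the same cycle of implications as the paper. The one real difference is in the finiteness step of (2)$\Rightarrow$(3).

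\textbf{The paper's route.} For each $\mathfrak m\in w\text{-}\Max(R)$ it picks one element $s_{\mathfrak m}\notin\mathfrak m$ with $s_{\mathfrak m}x\in M$. It then invokes quasi-compactness of $w\text{-}\Max(R)$ with respect to the sets $D(s)\cap w\text{-}\Max(R)$ to extract finitely many $s_i$. Finally, it concludes $(s_1,\dots,s_n)\in\GV(R)$ from the criterion that a finitely generated ideal is a $\GV$-ideal iff it lies in no maximal $w$-ideal.

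\textbf{Your route.} You collect all such elements into the conductor $(M:_R x)$. You then use that an ideal lying in no maximal $w$-ideal has $w$-closure $R$, and therefore contains a $\GV$-ideal, which is finitely generated by definition.

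\textbf{Comparison.} Both arguments rest on the same fact. The paper states the quasi-compactness of $w\text{-}\Max(R)$ without proof, and it would be proved by exactly your argument. Your version is therefore more self-contained. It needs only that every proper $w$-ideal lies in a maximal $w$-ideal, together with the description $I_w=\{r\in R\mid Jr\subseteq I \text{ for some } J\in\GV(R)\}$. It also produces the $\GV$-ideal directly, rather than passing through the finitely-generated criterion.

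You additionally check that the diagonal map $\E(M)\to\prod_{\mathfrak m}\E(M)_{\mathfrak m}$ is injective, using $\tor_{\GV}(\E(M))=0$ by essentiality over the $\GV$-torsion-free module $M$. The paper takes this for granted at this point; the corresponding kernel computation appears only later, as a cited lemma.
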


\begin{proof}
\textup{(1)$\Rightarrow$(2).}
Assume $x\in M_w$, so there exists $J\in \GV(R)$ such that $Jx\subseteq M$.
Let $\mathfrak m\in w\text{-}\Max(R)$. Since $J\nsubseteq\mathfrak m$,
we may choose $s\in J\setminus\mathfrak m$.
Then $s/1$ is a unit in $R_{\mathfrak m}$, and in $\E(M)_{\mathfrak m}$ we have
\[
(s/1)(x/1)=(sx)/1\in M_{\mathfrak m}.
\]
Because $s/1$ is invertible, it follows that $x/1\in M_{\mathfrak m}$.

\medskip
\noindent
\textup{(2)$\Rightarrow$(3).}
Fix $\mathfrak m\in w\text{-}\Max(R)$. By assumption, $x/1\in M_{\mathfrak m}$,
so there exists $s_{\mathfrak m}\in R\setminus\mathfrak m$ such that
$s_{\mathfrak m}x\in M$.
For each $\mathfrak m$, consider the basic open subset
\[
D(s_{\mathfrak m})\cap w\text{-}\Max(R)
=
\{\mathfrak n\in w\text{-}\Max(R)\mid s_{\mathfrak m}\notin\mathfrak n\}.
\]
These open sets cover $w\text{-}\Max(R)$.
Since the $w$-operation is of finite type, the space $w\text{-}\Max(R)$
is quasi-compact with respect to this basis.
Hence there exist $\mathfrak m_1,\dots,\mathfrak m_n\in w\text{-}\Max(R)$ such that
\[
w\text{-}\Max(R)
\subseteq
D(s_{\mathfrak m_1})\cup\cdots\cup D(s_{\mathfrak m_n}).
\]
Set $s_i:=s_{\mathfrak m_i}$ and $J:=(s_1,\dots,s_n)$.
Then $J\nsubseteq\mathfrak m$ for every $\mathfrak m\in w\text{-}\Max(R)$,
so $J\in \GV(R)$.
By construction, $s_i x\in M$ for all $i=1,\dots,n$.

\medskip
\noindent
\textup{(3)$\Rightarrow$(1).}
This follows immediately from the definition of $M_w$, since $Jx\subseteq M$.

\medskip
\noindent
Finally, the set-theoretic identity follows from the equivalence of
\textup{(1)} and \textup{(2)}, together with the diagonal embedding of $\E(M)$
into the product of its localizations.
\end{proof}

\begin{proposition}\label{prop:envelope-basic}
Let $M$ be a $\GV$-torsion-free $R$-module, and let $M_w$ denote its $w$-envelope
inside the injective hull $\E(M)$.
\begin{enumerate}\setlength\itemsep{0.35em}
\item[\textup{(1)}] \textup{(\cite[Proposition 6.2.4(2)]{WK24})}
$M\subseteq M_w\subseteq \E(M)$ and $M_w/M$ is $\GV$-torsion.

\item[\textup{(2)}]  \textup{(\cite[Theorem 6.2.2(4)]{WK24})}
\emph{Idempotence:} $(M_w)_w=M_w$.

\item[\textup{(3)}] \textup{(\cite[Theorem 6.2.2(4)]{WK24})}
\emph{Minimality:} if $M\subseteq N\subseteq \E(M)$ and $N/M$ is $\GV$-torsion, then
$M_w\subseteq N$.
Equivalently, $M_w$ is the smallest submodule of $\E(M)$ containing $M$
whose quotient modulo $M$ is $\GV$-torsion.

\item[\textup{(4)}]
\emph{Functoriality:} any $R$-linear map $f:M\to N$ between $\GV$-torsion-free modules
extends uniquely to an $R$-linear map
$f_w:M_w\to N_w$ satisfying $f_w|_M=f$.
\end{enumerate}
\end{proposition}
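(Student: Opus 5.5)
Parts (1)--(3) are quoted from \cite{WK24}, so the work is in (4), functoriality. I will reduce it to the extension property of injective hulls together with Proposition~\ref{prop:envelope-equiv}.

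\textbf{Existence.} Compose $f$ with the inclusion $N\subseteq N_w\subseteq \E(N)$. Since $\E(N)$ is injective and $M\subseteq \E(M)$, the composite extends to a homomorphism $g:\E(M)\to \E(N)$ with $g|_M=f$. We show $g(M_w)\subseteq N_w$. Take $x\in M_w$ and $J\in\GV(R)$ with $Jx\subseteq M$. Then
\[
Jg(x)=g(Jx)=f(Jx)\subseteq N .
\]
So $g(x)\in\E(N)$ satisfies condition (1) of Proposition~\ref{prop:envelope-equiv}, and hence $g(x)\in N_w$. Setting $f_w:=g|_{M_w}:M_w\to N_w$ gives the required extension.

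\textbf{Uniqueness.} Let $h,h':M_w\to N_w$ both restrict to $f$, and put $d=h-h'$. Then $d$ vanishes on $M$, so it factors through $M_w/M$. That quotient is $\GV$-torsion by part (1), so $d(M_w)$ is a $\GV$-torsion submodule of $N_w$. But $N_w$ is $\GV$-torsion-free: it is an essential extension of the $\GV$-torsion-free module $N$ inside $\E(N)$. Hence $d=0$.

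To make the torsion-freeness explicit, suppose $y\in N_w$, $J\in\GV(R)$ and $Jy=0$. If $y\neq 0$, essentiality gives $r\in R$ with $0\neq ry\in N$. Then $J(ry)=0$, so $ry$ is a nonzero $\GV$-torsion element of $N$, a contradiction.

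\textbf{Main obstacle.} The key step is this torsion-freeness of $N_w$ (equivalently of $\E(N)$). It is the one place where the ambient injective hull, rather than only $N$, matters. If it is not taken as standard, the essential-extension argument above settles it directly. Everything else is a formal consequence of injectivity and the characterization in Proposition~\ref{prop:envelope-equiv}.
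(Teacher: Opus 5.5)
Your proof of (4) is correct. The existence half is essentially the paper's argument: extend $f$ to $g:\E(M)\to\E(N)$ and note that $Jg(x)=f(Jx)\subseteq N$.

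For uniqueness you take a different route, and yours is the sound one. The paper only observes that $M$ is essential in $M_w$ and concludes $g-f_w=0$. That reason alone does not suffice, since a map vanishing on an essential submodule need not vanish (take $\mathbb{Z}\subseteq\mathbb{Q}$ and the projection $\mathbb{Q}\to\mathbb{Q}/\mathbb{Z}$). What actually forces $d=0$ is exactly what you use:
\begin{itemize}
\item $d(M_w)$ is $\GV$-torsion, because $M_w/M$ is $\GV$-torsion;
\item $N_w$ is $\GV$-torsion-free, because $N$ is $\GV$-torsion-free and essential in $\E(N)$.
\end{itemize}
So the essentiality that matters is that of $N$ on the target side, not that of $M$ in $M_w$. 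Your version supplies the step the paper's argument is missing.

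A caution about taking (1)--(3) on citation. Parts (1) and (2) are fine; the paper proves (2) via the local criterion of Proposition~\ref{prop:envelope-equiv}. But (3) as printed is false, so no citation can cover it.
\begin{itemize}
\item The choice $N=M$ satisfies the hypotheses of (3), and (3) would then force $M_w=M$ for every $\GV$-torsion-free module $M$.
\item Over a non-DW ring this fails. A proper $\GV$-ideal $J$ (e.g.\ $(x,y)\subseteq k[x,y]$) has zero annihilator, so $J$ is essential in $R$. Hence $R\subseteq\E(J)$, and $1\in J_w\setminus J$.
\end{itemize}
The true statement is the reverse inclusion $N\subseteq M_w$, which follows in one line since each $x\in N$ has $Jx\subseteq M$ for some $J\in\GV(R)$. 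Thus $M_w$ is the \emph{largest} such submodule, and also the smallest $w$-submodule of $\E(M)$ containing $M$. The paper's own proof of (3) breaks at the claim that $N/M$ being $\GV$-torsion forces $N_w=N$.
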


\begin{proof}
\textup{(1)}
By definition, $M\subseteq M_w\subseteq \E(M)$.
Let $x\in M_w$. Then there exists $J\in \GV(R)$ such that $Jx\subseteq M$.
Hence $J(x+M)=0$ in $M_w/M$, so $M_w/M$ is $\GV$-torsion.

\medskip
\noindent
\textup{(2)}
Let $x\in (M_w)_w$. By Proposition~\ref{prop:envelope-equiv}\textup{(2)},
\[
x/1\in (M_w)_{\mathfrak m}
\quad\text{for all }\mathfrak m\in w\text{-}\Max(R).
\]
By \textup{(1)}, the quotient $M_w/M$ is $\GV$-torsion. Hence
\[
(M_w)_{\mathfrak m}=M_{\mathfrak m}
\quad\text{for all }\mathfrak m\in w\text{-}\Max(R).
\]
Therefore $x/1\in M_{\mathfrak m}$ for every $\mathfrak m$.
Applying Proposition~\ref{prop:envelope-equiv}\textup{(2)} (with $M$ in place of $M_w$),
we obtain $x\in M_w$. Thus $(M_w)_w\subseteq M_w$, while the reverse inclusion
is immediate. Hence $(M_w)_w=M_w$.

\medskip
\noindent
\textup{(3)}
Let $M\subseteq N\subseteq \E(M)$ and assume that $N/M$ is $\GV$-torsion.
Since localization kills $\GV$-torsion, we have
\[
N_{\mathfrak m}=M_{\mathfrak m}
\quad\text{for all }\mathfrak m\in w\text{-}\Max(R).
\]
Let $x\in M_w$. By Proposition~\ref{prop:envelope-equiv}\textup{(2)},
\[
x/1\in M_{\mathfrak m}
\quad\text{for all }\mathfrak m\in w\text{-}\Max(R).
\]
Hence $x/1\in N_{\mathfrak m}$ for all such $\mathfrak m$.
Applying Proposition~\ref{prop:envelope-equiv}\textup{(2)} again,
now with $N$ in place of $M$, yields $x\in N_w$.
But $N/M$ is $\GV$-torsion, so $N$ is already $w$-closed inside $\E(M)$,
that is, $N_w=N$.
Therefore $x\in N$, and hence $M_w\subseteq N$.

\medskip
\noindent
\textup{(4)}
Let $f:M\to N$ be an $R$-linear map between $\GV$-torsion-free modules.
Let $f^{E}:\E(M)\to \E(N)$ be an extension to injective hulls.
If $x\in M_w$, choose $J\in \GV(R)$ such that $Jx\subseteq M$.
Then
\[
Jf^{E}(x)=f(Jx)\subseteq f(M)\subseteq N,
\]
so $f^{E}(x)\in N_w$.
Thus $f^{E}$ restricts to a homomorphism
\[
f_w:M_w\to N_w
\]
with $f_w|_M=f$.

For uniqueness, suppose $g:M_w\to N_w$ is another extension with $g|_M=f$.
Then $(g-f_w)|_M=0$.
Since $M$ is an essential submodule of $\E(M)$ and
$M\subseteq M_w\subseteq \E(M)$, it follows that $M$ is essential in $M_w$.
Hence $g-f_w=0$, and therefore $g=f_w$.
\end{proof}

\begin{remark}\label{rem:reconcile}
For a general $R$-module $X$, the categorical reflector associated to the
$\GV$-torsion theory is
\[
F(X)=X/\tor_{\GV}(X),
\]
which is $\GV$-torsion-free and functorial.
Thus $F(-)$ removes $\GV$-torsion by passing to a quotient and is characterized
by its universal property with respect to the full subcategory of
$\GV$-torsion-free modules.

If $X$ is already $\GV$-torsion-free, the $w$-envelope
\[
X_w\subseteq \E(X)
\]
is typically \emph{larger} than $X$; indeed, by
Proposition~\ref{prop:envelope-basic}\textup{(1)}, the quotient
$X_w/X$ is $\GV$-torsion.
In this sense, the constructions
\[
F(-)
\qquad\text{and}\qquad
(-)_w
\]
are complementary: the reflector $F(-)$ \emph{removes} $\GV$-torsion,
whereas the $w$-envelope $(-)_w$ \emph{adds} the minimal enlargement
inside the injective hull whose quotient modulo the original module
is $\GV$-torsion.

Moreover, Proposition~\ref{prop:envelope-basic} shows that on the class
of $\GV$-torsion-free modules the assignment $M\mapsto M_w$ is a closure
operator: it is extensive ($M\subseteq M_w$), idempotent
($ (M_w)_w=M_w$), functorial, and minimal with respect to having
$\GV$-torsion quotient.

In particular, for every $\GV$-torsion-free module $M$,
\[
F(M)=M,
\qquad
M\subseteq M_w\subseteq \E(M),
\qquad
M_w/M\ \text{is $\GV$-torsion}.
\]
\end{remark}

\begin{lemma}
\label{lem:kernel=GV}
Let $R$ be a commutative ring, and let $w$ denote the $w$-operation on $R$.
For any $R$-module $M$, consider the diagonal homomorphism
\[
\delta_M:\ M\longrightarrow 
\prod_{\mathfrak m\in w\text{-}\Max(R)} M_{\mathfrak m},
\qquad
x\longmapsto ((x/1)_{\mathfrak m}).
\]
Then
\[
\ker(\delta_M)=\tor_{\GV}(M).
\]
\end{lemma}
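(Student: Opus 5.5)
We prove the two inclusions separately, using the characterization of $\GV$-ideals recorded above: a finitely generated ideal $J$ lies in $\GV(R)$ if and only if $J\nsubseteq\mathfrak m$ for every $\mathfrak m\in w\text{-}\Max(R)$. The inclusion $\tor_{\GV}(M)\subseteq\ker(\delta_M)$ is the same argument as (1)$\Rightarrow$(2) in Proposition~\ref{prop:envelope-equiv}. Suppose $Jx=0$ with $J\in\GV(R)$, and fix $\mathfrak m\in w\text{-}\Max(R)$. Choose $s\in J\setminus\mathfrak m$. Then $sx=0$ with $s/1$ a unit in $R_{\mathfrak m}$, so $x/1=0$ in $M_{\mathfrak m}$. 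As $\mathfrak m$ was arbitrary, $\delta_M(x)=0$.

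For the reverse inclusion, let $x\in\ker(\delta_M)$. For each $\mathfrak m\in w\text{-}\Max(R)$ the condition $x/1=0$ in $M_{\mathfrak m}$ gives some $s_{\mathfrak m}\in R\setminus\mathfrak m$ with $s_{\mathfrak m}x=0$. Equivalently, the ideal $\Ann_R(x)$ is contained in no maximal $w$-ideal. We now need a \emph{finitely generated} subideal of $\Ann_R(x)$ with the same property. We get it exactly as in (2)$\Rightarrow$(3) of Proposition~\ref{prop:envelope-equiv}. The sets $D(s_{\mathfrak m})\cap w\text{-}\Max(R)$ cover $w\text{-}\Max(R)$, and $w\text{-}\Max(R)$ is quasi-compact, so finitely many suffice, say for $s_1,\dots,s_n$. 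Put $J=(s_1,\dots,s_n)$. Then $J$ is finitely generated and $J\nsubseteq\mathfrak m$ for every $\mathfrak m\in w\text{-}\Max(R)$, so $J\in\GV(R)$. Moreover $Jx=0$, since each $s_ix=0$. Hence $x\in\tor_{\GV}(M)$.

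The only nontrivial point is the passage from ``$\Ann_R(x)$ avoids every maximal $w$-ideal'' to ``$\Ann_R(x)$ contains a $\GV$-ideal''. This needs the quasi-compactness of $w\text{-}\Max(R)$, which the paper already invoked in the proof of Proposition~\ref{prop:envelope-equiv}.

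A more intrinsic justification is available and can replace the compactness argument. Suppose $I$ is an ideal with $I\nsubseteq\mathfrak m$ for all $\mathfrak m\in w\text{-}\Max(R)$. Then $I_w=R$, since otherwise $I_w$ would be a proper $w$-ideal and hence contained in some maximal $w$-ideal. Because the $w$-operation is of finite type, $1\in I_w$ means there is $J\in\GV(R)$ with $J\subseteq I$. Applying this with $I=\Ann_R(x)$ finishes the proof in either formulation.
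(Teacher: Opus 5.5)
Your proof is correct. It takes a different, more self-contained route than the paper, which gives no argument of its own: it applies a general localization lemma of Fuchs--Salce (Chapter~IX, Lemma~6.3) with $\mathcal P=w\text{-}\Max(R)$.

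A lemma phrased for an arbitrary set $\mathcal P$ of primes can only describe $\ker(M\to\prod_{P\in\mathcal P}M_P)$ in terms of $\mathcal P$ itself: elements vanishing in every $M_P$, i.e.\ elements killed by some ideal contained in no $P\in\mathcal P$. That matches the paper's localization-based definition of $\mathcal T(R)$ in Section~3. The identification with $\tor_{\GV}(M)$, which is defined through finitely generated $\GV$-ideals, is left implicit there.

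Your second inclusion supplies exactly that bridge. Turning the pointwise witnesses $s_{\mathfrak m}\in\Ann_R(x)\setminus\mathfrak m$ into one finitely generated $\GV$-ideal inside $\Ann_R(x)$ is the only place where the finite type of $w$ is needed. You do it either through the quasi-compactness of $w\text{-}\Max(R)$, already invoked in Proposition~\ref{prop:envelope-equiv}, or through your intrinsic version via $(\Ann_R(x))_w=R$. The latter is really the proof of that quasi-compactness in disguise. In that version the finiteness is consumed by the existence of a maximal $w$-ideal above the proper $w$-ideal $I_w$; the step from $1\in I_w$ to a $\GV$-ideal inside $I$ is just the definition of $I_w$.

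So your argument uses only facts already on the page and works verbatim over any commutative ring, and it makes the key finiteness step visible. The paper's citation buys brevity and places the lemma in the general theory of localizing at a set of primes, at the price of hiding that step.
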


\begin{proof}
Apply \cite[Chapter~IX, Lemma~6.3]{FS01} with 
$\mathcal{P}=w\text{-}\Max(R)$.
\end{proof}

\begin{corollary}
\label{cor:w-closure=quot}
For every $R$-module $M$, the diagonal homomorphism
\[
\delta_M:\ M\longrightarrow 
\prod_{\mathfrak m\in w\text{-}\Max(R)} M_{\mathfrak m}
\]
factors canonically as
\[
M\ \twoheadrightarrow\ M/\tor_{\GV}(M)
\ \xrightarrow{\ \overline{\delta}_M\ }\ 
\prod_{\mathfrak m\in w\text{-}\Max(R)} M_{\mathfrak m},
\]
where $\overline{\delta}_M$ is injective. In particular, there is a natural
isomorphism
\[
\Im(\delta_M)\ \cong\ M/\tor_{\GV}(M).
\]
\end{corollary}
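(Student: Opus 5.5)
This follows directly from Lemma~\ref{lem:kernel=GV} together with the universal property of quotients. By Lemma~\ref{lem:kernel=GV}, $\ker(\delta_M)=\tor_{\GV}(M)$. In particular $\delta_M$ vanishes on $\tor_{\GV}(M)$. Hence, by the universal property of the cokernel of the inclusion $\tor_{\GV}(M)\hookrightarrow M$, there is a unique $R$-linear map
\[
\overline{\delta}_M:\ M/\tor_{\GV}(M)\longrightarrow \prod_{\mathfrak m\in w\text{-}\Max(R)} M_{\mathfrak m},
\qquad x+\tor_{\GV}(M)\longmapsto ((x/1)_{\mathfrak m}),
\]
such that $\delta_M=\overline{\delta}_M\circ\pi$, where $\pi$ is the canonical projection. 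This gives the factorization.

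For injectivity, suppose $\overline{\delta}_M(x+\tor_{\GV}(M))=0$. Then $\delta_M(x)=0$, so $x\in\ker(\delta_M)=\tor_{\GV}(M)$, using the full equality of Lemma~\ref{lem:kernel=GV} rather than just one inclusion. Hence $x+\tor_{\GV}(M)=0$.

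Since $\pi$ is surjective, $\Im(\delta_M)=\Im(\overline{\delta}_M)$. As $\overline{\delta}_M$ is injective, it induces an isomorphism $M/\tor_{\GV}(M)\cong\Im(\delta_M)$. This is the first isomorphism theorem applied to $\delta_M$.

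For naturality, take $f:M\to N$. Then $f(\tor_{\GV}(M))\subseteq\tor_{\GV}(N)$: if $Jx=0$ with $J\in\GV(R)$, then $Jf(x)=0$. Also $f$ commutes with localization at each $\mathfrak m$. Together these give commutativity of the relevant squares for $\delta$, $\pi$ and $\overline{\delta}$.

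There is no real obstacle here. All the substance lies in the equality $\ker(\delta_M)=\tor_{\GV}(M)$, which is taken from Lemma~\ref{lem:kernel=GV}.
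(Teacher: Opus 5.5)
Your proof is correct and follows the paper's own argument: both deduce the factorization and the injectivity of $\overline{\delta}_M$ directly from the equality $\ker(\delta_M)=\tor_{\GV}(M)$ of Lemma~\ref{lem:kernel=GV}, and then identify the image by the first isomorphism theorem. Your explicit check of naturality, using that homomorphisms preserve $\GV$-torsion and commute with localization, supplies a detail the paper leaves implicit.
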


\begin{proof}
By Lemma~\ref{lem:kernel=GV}, we have
$\ker(\delta_M)=\tor_{\GV}(M)$.
Hence $\delta_M$ induces an injective homomorphism
\[
\overline{\delta}_M:\ 
M/\tor_{\GV}(M)\hookrightarrow
\prod_{\mathfrak m\in w\text{-}\Max(R)} M_{\mathfrak m}.
\]
By construction, the image of $\overline{\delta}_M$ coincides with
$\Im(\delta_M)$, yielding the claimed identification.
\end{proof}

\section{The category $(R,w)\text{-}\mathrm{mod}$ and $w$-faithfully exact functors}

Let $R$ be a commutative ring and let $w$ denote the $w$-operation on $R$. 
Set
\[
\mathcal T(R)
:=\bigl\{\,M\in\Mod(R)\mid 
M_{\mathfrak m}=0
\text{ for all }\mathfrak m\in w\text{-}\Max(R)\,\bigr\},
\]
the class of $\GV$-torsion $R$-modules, and
\[
\mathcal F(R)
:=\{\,M\in\Mod(R)\mid \tor_{\GV}(M)=0\,\},
\]
the class of $\GV$-torsion-free $R$-modules.
Then $\tau_w=(\mathcal T(R),\mathcal F(R))$ is a hereditary torsion theory of
\emph{finite type} on $\Mod(R)$ (see \cite{S75}).

\begin{definition}
\label{def:Rw-mod}
We define $(R,w)\text{-}\mathrm{mod}$ to be the full subcategory of 
$\Mod(R)$ consisting of all $w$-modules.
\end{definition}

We record the following basic properties of $(R,w)\text{-}\mathrm{mod}$:

\begin{enumerate}
\item $(R,w)\text{-}\mathrm{mod}$ is reflective; that is, the inclusion
\[
i:(R,w)\text{-}\mathrm{mod}\hookrightarrow \Mod(R)
\]
admits a left adjoint
\[
L:\Mod(R)\longrightarrow (R,w)\text{-}\mathrm{mod}.
\]

\item $(R,w)\text{-}\mathrm{mod}$ is a Grothendieck category.

\item The reflector $L$ is exact and annihilates $\mathcal T(R)$.

\item The functor $L$ is universal among exact functors on $\Mod(R)$
that annihilate $\mathcal T(R)$. More precisely, given a diagram
\[
\xymatrix{
\Mod(R)\ar[rr]^G\ar[rd]_L && \mathcal A\\
& (R,w)\text{-}\mathrm{mod}\ar@{-->}[ru]_F &
}
\]
where $G$ is exact and annihilates $\mathcal T(R)$,
there exists a unique (up to natural isomorphism) exact functor
$F:(R,w)\text{-}\mathrm{mod}\to\mathcal A$
such that $F \circ L$ is naturally isomorphic to $G$.
In fact, one may take $F=G|_{(R,w)\text{-}\mathrm{mod}}$.
\end{enumerate}

\begin{remark}
The exact structure on $(R,w)\text{-}\mathrm{mod}$ is inherited from
$\Mod(R)$ via the reflector. A sequence
\[
0\longrightarrow A'\longrightarrow A\longrightarrow A''\longrightarrow 0
\]
of $w$-modules is exact in $(R,w)\text{-}\mathrm{mod}$
if and only if it becomes exact after applying the quotient functor $T$,
equivalently, after localizing at every $\mathfrak m\in w\text{-}\Max(R)$.
\end{remark}

We briefly recall the construction of the Serre quotient
(see, for example, \cite[p.~42]{S68}).
The reflector $L:\Mod(R)\to (R,w)\text{-}\mathrm{mod}$
induces an equivalence between $(R,w)\text{-}\mathrm{mod}$
and the Serre quotient category
\[
\Mod(R)/\mathcal T(R).
\]
We denote this quotient by
\[
\mathsf Q_w(R):=\Mod(R)/\mathcal T(R),
\]
and call it the \emph{$w$-quotient category} of $\Mod(R)$.
Its objects coincide with those of $\Mod(R)$, and
\[
\Hom_{\mathsf Q_w(R)}(M,N)
=
\Hom_{(R,w)\text{-}\mathrm{mod}}(L(M),L(N))
=
\Hom_R(L(M),L(N)).
\]

We have a commutative diagram
\[
\xymatrix{
\Mod(R)\ar[rr]^L\ar[rd]_T && (R,w)\text{-}\mathrm{mod}\\
& \mathsf Q_w(R)\ar[ru]_S &
}
\]
where $S$ is an equivalence, $T$ is the canonical quotient functor,
and $S \circ T=L$.
Explicitly, $T$ acts as the identity on objects, and on morphisms
\[
T:\Hom_R(M,N)\longrightarrow
\Hom_{\mathsf Q_w(R)}(M,N)
=
\Hom_R(L(M),L(N)).
\]

Moreover:

\begin{enumerate}
\item $T$ is exact and annihilates $\mathcal T(R)$.
\item $\mathsf Q_w(R)$ is an abelian category.
\item The functor $T:\Mod(R)\to\mathsf Q_w(R)$
is universal among exact functors on $\Mod(R)$
that annihilate $\mathcal T(R)$.
That is, if $G:\Mod(R)\to\mathcal A$ is exact and
annihilates $\mathcal T(R)$, then there exists a unique exact functor
\[
\overline G:\mathsf Q_w(R)\to\mathcal A
\]
such that $\overline G \circ T=G$.
\end{enumerate}

For $M,N\in\Mod(R)$, define
\[
\hom_w(M,N)
:=
\Hom_{\mathsf Q_w(R)}\!\big(T(M),T(N)\big).
\]

\begin{remark}[On $T(M)$ versus $L(M)$]\label{rem:TM-vs-LM}
For every $M\in\Mod(R)$, there is a canonical isomorphism
\[
T(c_M):T(M)\xrightarrow{\ \sim\ }T\!\big(L(M)\big),
\]
induced by the canonical morphism
\[
c_M:\ 
M\twoheadrightarrow M/\tor_{\GV}(M)
\hookrightarrow (M/\tor_{\GV}(M))_w.
\]
Indeed, $\ker(c_M)=\tor_{\GV}(M)\in\mathcal T(R)$ and
$\coker(c_M)\in\mathcal T(R)$, so $T(c_M)$ is an isomorphism
in $\mathsf Q_w(R)$.
\end{remark}

Recall from \cite{I64} that a covariant or contravariant additive functor $G$
between abelian categories is called \emph{faithfully exact}
(resp.\ \emph{exact}) if, for every composable pair
$A\xrightarrow{f}B\xrightarrow{g}C$, the sequence
\[
G(A)\longrightarrow G(B)\longrightarrow G(C)
\]
is exact if and only if (resp.\ if)
\[
A\longrightarrow B\longrightarrow C
\]
is exact.

In an abelian category, an additive functor is faithfully exact if and only if
it is exact and faithful (i.e.\ injective on $\Hom$-sets, or equivalently,
it detects nonzero objects).

\begin{definition}
\label{def:w-faithful-functor}
Keep the notation as above. Let $\mathcal A$ be an abelian category.
A covariant or contravariant additive functor
\[
G:\Mod(R)\longrightarrow\mathcal A
\]
is called \emph{$w$-faithfully exact} if the following conditions hold:
\begin{enumerate}\setlength\itemsep{0.25em}
\item[\textup{(i)}] $G$ is exact and annihilates $\mathcal T(R)$;
\item[\textup{(ii)}] the induced functor
\[
\overline G:\mathsf Q_w(R)\longrightarrow\mathcal A
\]
is faithfully exact.
\end{enumerate}
\end{definition}

\begin{remark}
\label{rem:faithfully-exact-vs-w}
There is no conceptual difference between the two notions:
a $w$-faithfully exact functor is precisely an Ishikawa faithfully exact
functor formulated inside the $w$-quotient category $\mathsf Q_w(R)$.
\end{remark}

\begin{proposition}
\label{prop:w-faithful-equivalences}
Let $G:\Mod(R)\to\mathcal A$ be an exact functor that annihilates
$\mathcal T(R)$. The following conditions are equivalent:
\begin{enumerate}\setlength\itemsep{0.35em}
\item[\textup{(1)}]
$G$ is $w$-faithfully exact.

\item[\textup{(2)}]
The restriction
\[
G|_{(R,w)\text{-}\mathrm{mod}}:
(R,w)\text{-}\mathrm{mod}\longrightarrow \mathcal A
\]
is faithfully exact.

\item[\textup{(3)}]
The composite functor
\[
G|_{(R,w)\text{-}\mathrm{mod}}\circ S:
\mathsf Q_w(R)\longrightarrow \mathcal A,
\]
where $S:\mathsf Q_w(R)\to (R,w)\text{-}\mathrm{mod}$
is the equivalence, is faithfully exact.
\end{enumerate}
\end{proposition}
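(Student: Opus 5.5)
The plan is to reduce everything to a comparison of three functors out of $\mathsf Q_w(R)$ or $(R,w)\text{-}\mathrm{mod}$, all naturally isomorphic up to an equivalence. Faithful exactness is preserved under composition with equivalences and under natural isomorphism. Indeed, a sequence $A\to B\to C$ in $\mathcal A$ is exact if and only if an isomorphic sequence is exact. Likewise, an equivalence of abelian categories both preserves and reflects exactness of composable pairs.

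\textbf{Step 1: (1)$\Leftrightarrow$(3).} By the universal property of $T$ recorded above, $\overline G$ is the unique exact functor with $\overline G\circ T=G$. I will show that $G|_{(R,w)\text{-}\mathrm{mod}}\circ S$ is exact and satisfies $(G|\circ S)\circ T=G|\circ L\cong G$. Then uniqueness gives $\overline G\cong G|\circ S$, and the equivalence (1)$\Leftrightarrow$(3) follows by transporting faithful exactness along this natural isomorphism.

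For the isomorphism $G|\circ L\cong G$, apply $G$ to the canonical morphism $c_M\colon M\to L(M)$ of Remark~\ref{rem:TM-vs-LM}. Its kernel and cokernel lie in $\mathcal T(R)$ and $G$ kills them. Since $G$ is exact, $G(c_M)$ is an isomorphism, and naturality in $M$ follows from the naturality of $c_M$.

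Exactness of $G|\circ S$ needs care, because exact sequences in $(R,w)\text{-}\mathrm{mod}$ are exact in $\Mod(R)$ only up to $\GV$-torsion. Let $A'\to A\to A''$ be exact in $(R,w)\text{-}\mathrm{mod}$. Then the homology of this sequence computed in $\Mod(R)$ lies in $\mathcal T(R)$, so exactness of $G$ together with $G(\mathcal T(R))=0$ gives exactness of the image sequence. To make this rigorous, I would split the sequence into short exact pieces via images and argue as in the $c_M$ step.

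\textbf{Step 2: (2)$\Leftrightarrow$(3).} Since $S$ is an equivalence of abelian categories, it preserves and reflects exact sequences. Hence $G|\circ S$ is faithfully exact if and only if $G|$ is; for the converse, compose with a quasi-inverse $S^{-1}$ and use $G|\circ S\circ S^{-1}\cong G|$.

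\textbf{Main obstacle.} The delicate point is the mismatch between exactness in $(R,w)\text{-}\mathrm{mod}$ and in $\Mod(R)$. This appears in Step 1, where exactness of $G|\circ S$ needs the homology to be torsion and $G$ to kill it, and in making sure that the "faithfully exact" in (2) refers to the abelian structure of $(R,w)\text{-}\mathrm{mod}$ given in the Remark. I will handle both uniformly by checking exactness at $\mathfrak m$-localizations, as the Remark permits, and by using that an exact functor killing $\mathcal T(R)$ turns morphisms with torsion kernel and cokernel into isomorphisms.
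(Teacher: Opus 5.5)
Your proposal is correct and follows essentially the paper's route. Both arguments identify $\overline G$ with $G|_{(R,w)\text{-}\mathrm{mod}}\circ S$ up to natural isomorphism: the paper does this by an explicit diagram chase through the isomorphisms $T(c_M)$, you via the uniqueness clause of the universal property of $T$ together with $G(c_M)\colon G\cong G|\circ L$. Both then transport faithful exactness along that isomorphism and get (2)$\Leftrightarrow$(3) from $S$ being an equivalence.

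One small point to tighten: the uniqueness clause as stated requires strict equality $\overline G\circ T=G$, whereas you only have $(G|\circ S)\circ T\cong G$. Since $T$ is the identity on objects, you can fix this by conjugation. Set $F'(M)=G(M)$ and $F'(f)=G(c_N)^{-1}\circ (G|\circ S)(f)\circ G(c_M)$. Naturality of $c$ gives $F'\circ T=G$ on the nose, and $F'$ is exact because it is isomorphic to $G|\circ S$, so uniqueness yields $F'=\overline G$.
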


\begin{proof}
(1)$\Leftrightarrow$(3).
Since faithful exactness is preserved under natural isomorphisms of functors, it suffices to show that the induced functor
\[
\overline G:\mathsf Q_w(R)\to\mathcal A
\]
is naturally isomorphic to the composite functor
\[
G|_{(R,w)\text{-}\mathrm{mod}}\circ S:\mathsf Q_w(R)\longrightarrow \mathcal A.
\]

Let $f\in\Hom_{\mathsf Q_w(R)}(M,N)=\Hom_R(L(M),L(N))$. 
Then in $\Mod(R)$ we have
\[
\begin{array}{l}
	\xymatrix{
		M\ar[d]_{c_M} & N\ar[d]^{c_N} \\
		L(M)\ar[r]_{f} & L(N)
	}
\end{array}
\eqno{\textcircled{1}}
\]

Applying $T$ to \textcircled{1} yields
\[
\begin{array}{l}
	\xymatrix{
		T(M)\ar[r]^{f}\ar[d]_{T(c_M)}^{\cong} & T(N)\ar[d]^{T(c_N)}_{\cong} \\
		T \circ L(M)\ar[r]_{T(f)} & T \circ L(N)
	}
\end{array}
\eqno{\textcircled{2}}
\]
in $\mathsf Q_w(R)$, where $T(c_M)$ and $T(c_N)$ are isomorphisms.

Applying $\overline G$ and $G|_{(R,w)\text{-}\mathrm{mod}}\circ S$ to \textcircled{2} gives

\[
\begin{array}{l}
	\xymatrix@C=2cm{
		\overline{G} \circ T(M)\ar[r]^{\overline{G}(f)}\ar[d]_{\overline{G} \circ T(c_M)}^{\cong} 
		& \overline{G} \circ T(N)\ar[d]^{\overline{G} \circ T(c_N)}_{\cong} \\
		\overline{G} \circ T \circ L(M)\ar[r]_{\overline{G} \circ T(f)} & \overline{G} \circ T \circ L(N)
	}
\end{array}
\eqno{\textcircled{3}}
\]

and

\[
\begin{array}{l}
	\xymatrix@C=3cm{
		G|_{(R,w)\text{-}\mathrm{mod}} \circ S \circ T(M)
		\ar[r]^{G|_{(R,w)\text{-}\mathrm{mod}} \circ S(f)}
		\ar[d]_{G|_{(R,w)\text{-}\mathrm{mod}} \circ S \circ T(c_M)}^{\cong} 
		& G|_{(R,w)\text{-}\mathrm{mod}} \circ S \circ T(N)
		\ar[d]^{G|_{(R,w)\text{-}\mathrm{mod}} \circ S \circ T(c_N)}_{\cong} \\
		G|_{(R,w)\text{-}\mathrm{mod}} \circ S \circ T \circ L(M)
		\ar[r]_{G|_{(R,w)\text{-}\mathrm{mod}} \circ S \circ T(f)} 
		& G|_{(R,w)\text{-}\mathrm{mod}} \circ S \circ T \circ L(N)
	}
\end{array}
\eqno{\textcircled{4}}
\]
in $\mathcal A$.

Note that $G$ is naturally isomorphic to $G|_{(R,w)\text{-}\mathrm{mod}}\circ L$, 
that $G=\overline G\circ T$, and that $L=S\circ T$. 
Hence $\overline G\circ T$ is naturally isomorphic to 
$G|_{(R,w)\text{-}\mathrm{mod}}\circ S\circ T$. 
Therefore we obtain

\[
\begin{array}{l}
	\xymatrix@C=3cm{
		\overline{G} \circ T \circ L(M)\ar[r]^{\overline{G} \circ T(f)}\ar[d]^{\cong} 
		& \overline{G} \circ T \circ L(N)\ar[d]_{\cong} \\
		G|_{(R,w)\text{-}\mathrm{mod}} \circ S \circ T \circ L(M)
		\ar[r]_{G|_{(R,w)\text{-}\mathrm{mod}} \circ S \circ T(f)} 
		& G|_{(R,w)\text{-}\mathrm{mod}} \circ S \circ T \circ L(N)
	}
\end{array}
\eqno{\textcircled{5}}
\]

Combining \textcircled{3}, \textcircled{4}, and \textcircled{5}, we obtain

\[
\xymatrix@C=3cm{
	\overline{G} \circ T(M)\ar[r]^{\overline{G}(f)}\ar[d]^{\cong} 
	& \overline{G} \circ T(N)\ar[d]_{\cong} \\
	G|_{(R,w)\text{-}\mathrm{mod}} \circ S \circ T(M)
	\ar[r]_{G|_{(R,w)\text{-}\mathrm{mod}} \circ S(f)} 
	& G|_{(R,w)\text{-}\mathrm{mod}} \circ S \circ T(N)
}
\]

Since $T(M)=M$ and $T(N)=N$, this becomes

\[
\xymatrix@C=3cm{
	\overline{G}(M)\ar[r]^{\overline{G}(f)}\ar[d]^{\cong} 
	& \overline{G}(N)\ar[d]_{\cong} \\
	G|_{(R,w)\text{-}\mathrm{mod}} \circ S(M)
	\ar[r]_{G|_{(R,w)\text{-}\mathrm{mod}} \circ S(f)} 
	& G|_{(R,w)\text{-}\mathrm{mod}} \circ S(N)
}
\]

Thus $\overline G$ is naturally isomorphic to 
$G|_{(R,w)\text{-}\mathrm{mod}}\circ S$, as required.

\smallskip
\noindent
(2)$\Leftrightarrow$(3).
This is immediate since $S$ is an equivalence of categories.
\end{proof}

\noindent\textbf{Working criterion (abelian setting).}
For additive functors between abelian categories,
\[
\text{faithfully exact}
\quad\Longleftrightarrow\quad
\text{exact and faithful}.
\]
Hence, in the $w$-context, a functor $G$ is $w$-faithfully exact if and only if
its restriction to $(R,w)\text{-}\mathrm{mod}$ is exact and detects nonzero
objects and nonzero morphisms there.

\begin{theorem}$($$w$-version of Ishikawa's \cite[Theorem~1.1]{I64}$)$
\label{thm:w-Ishi-11}
Let $\mathcal A$ be an abelian category and let
$G:\Mod(R)\to\mathcal A$ be an exact covariant functor
that annihilates $\mathcal T(R)$.
Then the following conditions are equivalent:
\begin{enumerate}\setlength\itemsep{0.35em}
\item[\textup{(1)}] $G$ is $w$-faithfully exact.
\item[\textup{(2)}] $G(A)\neq 0$ for every nonzero $w$-module $A$.
\item[\textup{(3)}] $G(\varphi)\neq 0$ for every nonzero morphism
$\varphi$ in $(R,w)\text{-}\mathrm{mod}$.
\item[\textup{(4)}] $G(B)\neq 0$ for every nonzero $\GV$-torsion-free
$R$-module $B$.
\item[\textup{(5)}] $G(R/I)\neq 0$ for every proper $w$-ideal $I$ of $R$.
\item[\textup{(6)}] $G(R/\mathfrak m)\neq 0$ for every
$\mathfrak m\in w\text{-}\Max(R)$.
\end{enumerate}
For a contravariant exact functor, the same equivalences hold with the
obvious reversal of arrow directions.
\end{theorem}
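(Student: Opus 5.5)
The plan is to reduce everything to the classical criterion via Proposition~\ref{prop:w-faithful-equivalences}: $G$ is $w$-faithfully exact if and only if the restriction $G|_{(R,w)\text{-}\mathrm{mod}}$ is exact and faithful. I will prove the cycle (1)$\Rightarrow$(3)$\Rightarrow$(2)$\Rightarrow$(4)$\Rightarrow$(5)$\Rightarrow$(6)$\Rightarrow$(2)$\Rightarrow$(1).

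\textbf{The formal implications.} (1)$\Rightarrow$(3) is faithfulness on morphisms. (3)$\Rightarrow$(2) follows by applying (3) to $\mathrm{id}_A$. For (2)$\Rightarrow$(4), take $B$ nonzero and $\GV$-torsion-free. Then $c_B$ has $\GV$-torsion kernel and cokernel, so exactness and the vanishing of $G$ on $\mathcal T(R)$ give $G(B)\cong G(B_w)$. Since $B\subseteq B_w$, the module $B_w$ is nonzero, hence $G(B)\neq0$. For (4)$\Rightarrow$(5), note that for a proper $w$-ideal $I$ the module $R/I$ is $\GV$-torsion-free: if $J\in\GV(R)$ and $Jx\subseteq I$, then $x\in I_w=I$. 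Also $R/I\ne0$. For (5)$\Rightarrow$(6), observe that maximal $w$-ideals are proper $w$-ideals.

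\textbf{(6)$\Rightarrow$(2).} This is the key step. Let $A$ be a nonzero $w$-module and pick $0\ne x\in A$. Since $A$ is $\GV$-torsion-free, Lemma~\ref{lem:kernel=GV} gives some $\mathfrak m\in w\text{-}\Max(R)$ with $x/1\ne0$ in $A_{\mathfrak m}$, so $\Ann(x)\subseteq\mathfrak m$. Exactness applied to $Rx\hookrightarrow A$ shows $G(Rx)$ embeds in $G(A)$, so it suffices to show $G(R/\Ann(x))\ne0$. The surjection $R/\Ann(x)\twoheadrightarrow R/\mathfrak m$ yields by exactness a surjection $G(R/\Ann(x))\twoheadrightarrow G(R/\mathfrak m)\ne0$. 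Hence $G(Rx)\ne0$ and therefore $G(A)\ne0$. In the contravariant case the arrows reverse: the monomorphism becomes an epimorphism $G(A)\to G(Rx)$ and the epimorphism becomes a monomorphism $G(R/\mathfrak m)\to G(R/\Ann(x))$, and the same conclusion follows.

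\textbf{(2)$\Rightarrow$(1).} This is the step I expect to need the most care, since kernels and images in $(R,w)\text{-}\mathrm{mod}$ differ from those in $\Mod(R)$. By Proposition~\ref{prop:w-faithful-equivalences} it suffices to show that $G|_{(R,w)\text{-}\mathrm{mod}}$ is exact and faithful. Exactness holds because that restriction is naturally isomorphic to $\overline G\circ S^{-1}$, and $\overline G$ is exact by the universal property of $T$. For faithfulness, let $\varphi:A\to B$ be a nonzero map of $w$-modules. Its image $\Im\varphi\subseteq B$ is a nonzero $\GV$-torsion-free module. Factor $\varphi$ as $A\twoheadrightarrow\Im\varphi\hookrightarrow B$ in $\Mod(R)$. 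Exactness of $G$ makes $G(\varphi)$ the composite of an epi and a mono through $G(\Im\varphi)$. Since $G(\Im\varphi)\cong G((\Im\varphi)_w)\ne0$ by the argument in (2)$\Rightarrow$(4), we get $G(\varphi)\ne0$. In an abelian category, an exact functor that kills no nonzero morphism is faithful, hence faithfully exact. The contravariant version is identical with arrows reversed.
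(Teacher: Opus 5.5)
Your proof is correct and follows the paper's route. You reduce to $G|_{(R,w)\text{-}\mathrm{mod}}$ via Proposition~\ref{prop:w-faithful-equivalences}, use the same chain (2)$\Rightarrow$(4)$\Rightarrow$(5)$\Rightarrow$(6), and use the same cyclic-submodule argument for (6)$\Rightarrow$(2); there you find $\mathfrak m\supseteq\Ann(x)$ through Lemma~\ref{lem:kernel=GV}, whereas the paper observes that $\Ann(x)$ is a proper $w$-ideal. The one real difference is that you prove (2)$\Rightarrow$(1) directly, by checking exactness of the restriction and factoring $\varphi$ through its $\GV$-torsion-free image in $\Mod(R)$, where the paper only cites Sublemma~1.15 of [S68]; this makes your version self-contained and states explicitly the exactness of the restriction that the citation takes for granted.
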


\begin{proof}
(1)$\Leftrightarrow$(2)$\Leftrightarrow$(3).
By Proposition~\ref{prop:w-faithful-equivalences},
$G$ is $w$-faithfully exact if and only if its restriction
\[
G|_{(R,w)\text{-}\mathrm{mod}}:
(R,w)\text{-}\mathrm{mod}\longrightarrow \mathcal A
\]
is faithfully exact.
The equivalence of \textup{(1)}--\textup{(3)} therefore follows from
\cite[Sublemma~1.15]{S68}.

\medskip
\noindent
(2)$\Rightarrow$(4).
Assume \textup{(2)} and let $B$ be a nonzero $\GV$-torsion-free
$R$-module.
Then the sequence
\[
0\longrightarrow B\longrightarrow B_w
\longrightarrow B_w/B\longrightarrow 0
\]
is exact, with $B_w\neq 0$ and $B_w/B\in\mathcal T(R)$.
Since $G$ is exact and annihilates $\mathcal T(R)$, we obtain an exact
sequence
\[
0\longrightarrow G(B)\longrightarrow G(B_w)\longrightarrow 0
\]
in $\mathcal A$.
Thus $G(B)\cong G(B_w)$.
By \textup{(2)}, $G(B_w)\neq 0$, hence $G(B)\neq 0$.

\medskip
\noindent
(4)$\Rightarrow$(5).
If $I$ is a proper $w$-ideal, then $R/I$ is $\GV$-torsion-free and
nonzero. Hence $G(R/I)\neq 0$ by \textup{(4)}.

\medskip
\noindent
(5)$\Rightarrow$(6).
Every $\mathfrak m\in w\text{-}\Max(R)$ is a proper $w$-ideal.
Thus \textup{(6)} follows immediately from \textup{(5)}.

\medskip
\noindent
(6)$\Rightarrow$(2).
Assume \textup{(6)} and let $A$ be a $w$-module with $G(A)=0$.
We show that $A=0$.

For any $a\in A$, consider the cyclic submodule $Ra$.
Since $G$ is exact, $G(Ra)=0$.
Observe that $Ra\cong R/\ann_R(a)$.
Because $A$ is a $w$-module, $\ann_R(a)$ is a $w$-ideal.

If $\ann_R(a)\neq R$, then there exists
$\mathfrak m\in w\text{-}\Max(R)$ with
$\ann_R(a)\subseteq \mathfrak m$.
Hence we have a surjection
\[
R/\ann_R(a)\twoheadrightarrow R/\mathfrak m.
\]
Applying $G$ yields an exact sequence
\[
0=G(R/\ann_R(a))
\longrightarrow G(R/\mathfrak m)
\longrightarrow 0,
\]
so $G(R/\mathfrak m)=0$, contradicting \textup{(6)}.
Therefore $\ann_R(a)=R$, so $a=0$.
Thus $A=0$.
\end{proof}

\section{$w$-faithfully projective modules}

Fix the $w$-operation on a commutative ring $R$ and work in the Grothendieck category
$(R,w)\text{-}\mathrm{mod}$
of $w$-modules. Let
\[
L(-):=((-)/\tor_{\GV}(-))_w
\]
denote the reflector onto $(R,w)\text{-}\mathrm{mod}$.

For a fixed $R$-module $P$, define a covariant functor on
$\Mod(R)$ by
\[
H_w(X)
:=\hom_w(P,X)
=\Hom_R\!\big(L(P),\,L(X)\big),
\qquad X\in\Mod(R).
\]
Thus we obtain a left exact functor
\[
H_w(-):
\Mod(R)\xrightarrow{L(-)}
(R,w)\text{-}\mathrm{mod}
\xrightarrow{\Hom_R(L(P),-)}
(R,w)\text{-}\mathrm{mod},
\]
which annihilates $\mathcal T(R)$.

\medskip

Let
\[
\xi:\ 0 \longrightarrow A
\xrightarrow{f}
B
\xrightarrow{g}
C
\longrightarrow 0
\]
be a short exact sequence of $R$-modules.
Recall from \cite{WQ20} that $\xi$ is said to be \emph{$w$-split}
if there exist $J=(d_1,\dots,d_n)\in\GV(R)$ and
homomorphisms $h_1,\dots,h_n\in\Hom_R(C,B)$ such that
\[
d_k\cdot 1_C = g h_k,
\qquad k=1,\dots,n.
\]
In this case, $J$ is called a $\GV$-ideal associated with the
$w$-split exact sequence $\xi$.

An $R$-module $M$ is called \emph{$w$-split}
if there exist a projective module $F$ and an epimorphism
$g:F\to M$ such that the exact sequence
\[
0\longrightarrow \ker(g)
\longrightarrow F
\longrightarrow M
\longrightarrow 0
\]
is $w$-split.

Let $M$ and $N$ be $R$-modules and let $f:M\to N$ be an $R$-homomorphism. 
We say that $f$ is a \emph{$w$-isomorphism} if both $\ker(f)$ and $\coker(f)$ are $\GV$-torsion modules. 

Equivalently, $f$ is a $w$-isomorphism if the induced map
\[
L(f):L(M)\longrightarrow L(N)
\]
is an isomorphism in $(R,w)\text{-}\mathrm{mod}$ (equivalently, in $\Mod(R)$).

We now present necessary and sufficient conditions for the exactness
of the functor $H_w(-)$.

\begin{lemma}\label{lem:w-pro-w-split}
Let $M$ be an $R$-module. Then $M$ is $w$-projective
if and only if $L(M)$ is $w$-split.
\end{lemma}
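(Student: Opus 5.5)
We prove both implications through the $\Ext$ characterization of $w$-projectivity from \cite{WK15}: $M$ is $w$-projective exactly when $\Ext^1_R(L(M),N)$ is $\GV$-torsion for every $\GV$-torsion-free $w$-module $N$. We also use the known fact that $\Ext^1_R(L(M),N)$ is itself GV-torsion-free when $N$ is a $w$-module. Set $P:=L(M)$, a $w$-module, and fix an epimorphism $g:F\to P$ with $F$ free and kernel $K$. The argument then converts between the \emph{$w$-split} condition on $0\to K\to F\to P\to 0$ and the vanishing, up to $\GV$-torsion, of the class of this sequence in $\Ext^1_R(P,K)$.

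\emph{($\Leftarrow$).} Suppose the sequence $0\to K\to F\xrightarrow{g} P\to 0$ is $w$-split, with $J=(d_1,\dots,d_n)\in\GV(R)$ and maps $h_k:P\to F$ satisfying $gh_k=d_k1_P$. Let $N$ be a $w$-module and apply $\Hom_R(-,N)$. We get the exact sequence
\[
\Hom_R(F,N)\xrightarrow{\;g^*\;}\Hom_R(K,N)\xrightarrow{\;\partial\;}\Ext^1_R(P,N)\longrightarrow 0 .
\]
Multiplication by $d_k$ on $\Ext^1_R(P,N)$ is the map induced by $d_k1_P=gh_k$, so it factors through $g^*:\Ext^1_R(P,N)\to\Ext^1_R(F,N)=0$. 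Hence $J\cdot\Ext^1_R(P,N)=0$, so $\Ext^1_R(L(M),N)$ is $\GV$-torsion and $M$ is $w$-projective.

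\emph{($\Rightarrow$).} This direction is the main obstacle, since the $w$-projectivity hypothesis applies only to $w$-modules $N$, while $K$ need not be a $w$-module. First, $K\subseteq F$ is $\GV$-torsion-free. Consider $K\hookrightarrow K_w$, whose cokernel is $\GV$-torsion. Pushing out the sequence along this map gives
\[
0\to K_w\to F'\to P\to 0 .
\]
By hypothesis, $\Ext^1_R(P,K_w)$ is $\GV$-torsion, so there is $J=(d_1,\dots,d_n)\in\GV(R)$ with $d_k$ killing the class of this pushout. This yields maps $h'_k:P\to F'$ with $g'h'_k=d_k1_P$.

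The remaining step is to pull these maps back to $F$. The quotient $F'/F\cong K_w/K$ is $\GV$-torsion, which suggests replacing $F$ by $L(F)=F_w$. Since $P$ is a $w$-module, $F'\subseteq F_w$ after identification, because $K_w\subseteq F_w$. However, we need the sequence with a projective middle term, and $F_w$ need not be projective.

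To avoid this, I would aim for the alternative route of showing that $\Ext^1_R(P,K)$ is GV-torsion directly. Take the exact sequence $\Hom_R(P,K_w/K)\to\Ext^1_R(P,K)\to\Ext^1_R(P,K_w)$. The right-hand term is GV-torsion by hypothesis. The left-hand term is GV-torsion as a module of maps into a GV-torsion module; this requires $P$ finitely generated, or else I would argue elementwise on the image of a given extension class. This gives $J\in\GV(R)$ killing the class of $0\to K\to F\to P\to 0$ in $\Ext^1_R(P,K)$. Once $d_k\xi=0$, the pullback of $\xi$ along $d_k1_P$ splits, which gives $h_k:P\to F$ with $gh_k=d_k1_P$, so $L(M)$ is $w$-split.

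The delicate point is controlling $\Hom_R(P,K_w/K)$ for a non-finitely-generated $P$. My fallback is to cite the argument of \cite{WQ20}, where $w$-split modules were shown to coincide with this $\Ext$ condition.
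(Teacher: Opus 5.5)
\textbf{Verdict: there is a genuine gap in ($\Rightarrow$).} Your ($\Leftarrow$) argument is correct. The ($\Rightarrow$) direction is left unfinished: you stop at an unresolved ``delicate point'' with a vague fallback citation. As written, your route through $\Hom_R(P,K_w/K)\to\Ext^1_R(P,K)\to\Ext^1_R(P,K_w)$ only handles finitely generated $P$. The ``elementwise'' variant does not repair this, because in general a single homomorphism $P\to K_w/K$ need not be killed by one $\GV$-ideal.

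The missing idea is that the obstacle you identify is not there: $K=\ker(g)$ \emph{is} a $\GV$-torsion-free $w$-module.
\begin{itemize}
\item A free module $F$ is a $w$-module. Indeed, $R$ is one, and $w$-modules are closed under direct sums because $\GV$-ideals are finitely generated, so $\Ext^1_R(R/J,-)$ commutes with direct sums.
\item Hence $K_w\subseteq F_w=F$. Suppose $x\in K_w$ with $Jx\subseteq K$ for some $J\in\GV(R)$. Then $J\,g(x)=g(Jx)=0$, so $g(x)\in\tor_{\GV}(P)=0$, since $P=L(M)$ is $\GV$-torsion-free. Thus $x\in K$, and $K_w=K$.
\end{itemize}

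With this, the argument closes without any finiteness assumption on $P$.
\begin{itemize}
\item $\Ext^1_R(P,K)$ is $\GV$-torsion directly from the definition of $w$-projectivity.
\item So the class $\xi$ of $0\to K\to F\to P\to 0$ is killed by some $J=(d_1,\dots,d_n)\in\GV(R)$.
\item Your own final step, via exactness of $\Hom_R(P,F)\to\Hom_R(P,P)\to\Ext^1_R(P,K)$, then gives maps $h_k$ with $gh_k=d_k1_P$.
\end{itemize}
In your own terms, $K_w/K=0$, so the troublesome term $\Hom_R(P,K_w/K)$ vanishes and your pushout $F'$ is just $F$.

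Once fixed, your proof is a self-contained verification from the $\Ext$-definition. The paper instead cites \cite[Proposition~2.3(1)]{WK15} to pass from $M$ to $L(M)$, and \cite[Theorem~6.7.11]{WK24}, which says a $w$-module is $w$-projective if and only if it is $w$-split.

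Separately, delete the ``known fact'' that $\Ext^1_R(L(M),N)$ is $\GV$-torsion-free whenever $N$ is a $w$-module; it is false. Take $R=k[x,y,z]$ over a field $k$, and let $A=\ker\big(R^3\xrightarrow{(x,y,z)}R\big)$, which is a $w$-module by the kernel argument above. Then $\Ext^1_R(A,R)\cong R/(x,y,z)\neq 0$, and this module is annihilated by the $\GV$-ideal $(x,y,z)$. Fortunately your argument never actually uses that claim.
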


\begin{proof}
Since the canonical map $c_M:M\to L(M)$ is a $w$-isomorphism,
$M$ is $w$-projective if and only if $L(M)$ is $w$-projective
by \cite[Proposition~2.3(1)]{WK15}.
Moreover, $L(M)$ is a $w$-module, and
\cite[Theorem~6.7.11]{WK24} shows that a $w$-module is $w$-projective
if and only if it is $w$-split.
Hence $M$ is $w$-projective if and only if $L(M)$ is $w$-split.
\end{proof}

\begin{proposition}\label{prop:exactness of H_w}
If the functor
\[
H_w(-):\Mod(R)\longrightarrow (R,w)\text{-}\mathrm{mod}
\]
is exact, then $L(P)$ is $w$-split.
Equivalently, $P$ is $w$-projective.
\end{proposition}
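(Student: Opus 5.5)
Because of Lemma~\ref{lem:w-pro-w-split}, the two conclusions are equivalent, so the task is to show that exactness of $H_w(-)$ forces $L(P)$ to be $w$-split. The plan is to apply exactness to a free presentation of $P$ and produce a $\GV$-ideal together with local sections.

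Choose a free module $F$ and an epimorphism $g\colon F\to L(P)$, and put $K=\ker(g)$. The sequence $0\to K\to F\to L(P)\to 0$ is exact in $\Mod(R)$. Since $L$ is exact, applying $H_w$ gives the exact sequence
\[
0\to \Hom_R(L(P),L(K))\to \Hom_R(L(P),L(F))\xrightarrow{\ g_*\ } \Hom_R(L(P),L(L(P)))\to 0.
\]
The last term is $\Hom_R(L(P),L(P))$, because $L(P)$ is a $w$-module. Here exactness is taken in $(R,w)\text{-}\mathrm{mod}$, so the right-hand term means that $g_*$ is only surjective up to $\GV$-torsion: $\coker(g_*)$ is $\GV$-torsion. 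Applied to the element $1_{L(P)}$, this yields $J=(d_1,\dots,d_n)\in\GV(R)$ and maps $u_k\colon L(P)\to L(F)=F_w$ with $L(g)\circ u_k=d_k\cdot 1_{L(P)}$ for each $k$.

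The main obstacle is that the maps $u_k$ land in $F_w$, not in $F$, so they do not yet witness that $0\to K\to F\to L(P)\to 0$ is $w$-split in the sense of \cite{WQ20}. I would first try to avoid the issue by invoking the notion of $w$-split modules from \cite[Theorem~6.7.11]{WK24}, which is stated for $w$-modules and tolerates passing to $w$-envelopes. Concretely, it suffices that the sequence $0\to K_w\to F_w\to L(P)\to 0$ is $w$-split, and the maps $u_k$ show exactly that.

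If a genuine projective module is required, I would pull back as follows. The cokernel $F_w/F$ is $\GV$-torsion, and $L(P)$ is finitely generated only in special cases, so rather than factoring through $F$ I would replace $F$ by a free module $F'$ mapping onto $F_w$. I would then lift each $u_k$ through $F'\to F_w$ using a further $\GV$-ideal multiplier, obtained elementwise from Proposition~\ref{prop:envelope-equiv}(3); the product of two $\GV$-ideals is again a $\GV$-ideal. Lifting along the surjection $F'\to F_w$ is in fact direct only when $L(P)$ is projective, so the cleaner route is to conclude $w$-projectivity directly. For every torsion-free $w$-module $N$, the equations $d_k\cdot 1_{L(P)}=L(g)\circ u_k$ show that each $d_k$ annihilates $\Ext^1_R(L(P),N)$. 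The reason is that multiplication by $d_k$ on $\Ext^1_R(L(P),N)$ factors through $\Ext^1_R(F_w,N)$, and this group vanishes: $\Ext^1_R(F,N)=0$, and $\Ext^1_R(F_w/F,N)$ and $\Hom_R(F_w/F,N)$ are handled by $N$ being a torsion-free $w$-module, via the standard fact that $\Ext^1_R(T,N)=0$ for $\GV$-torsion $T$ and $w$-modules $N$. Hence $J\cdot\Ext^1_R(L(P),N)=0$, so $\Ext^1_R(L(P),N)$ is $\GV$-torsion, which means $P$ is $w$-projective. Lemma~\ref{lem:w-pro-w-split} then shows that $L(P)$ is $w$-split.
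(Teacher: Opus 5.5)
Your final argument is correct, but it runs in the opposite direction from the paper's. You and the paper both apply exactness of $H_w$ to $0\to K\to F\to L(P)\to 0$ and deduce that $\coker(g_*)$ is $\GV$-torsion.

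The paper then cites \cite[Proposition~2.4]{WQ20} to conclude that $L(P)$ is $w$-split. It uses Lemma~\ref{lem:w-pro-w-split} only for the ``equivalently'' clause.

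You go the other way:
\begin{itemize}
\item You extract $J=(d_1,\dots,d_n)$ and maps $u_k$ with $L(g)u_k=d_k\cdot 1_{L(P)}$.
\item You note that multiplication by $d_k$ on $\Ext^1_R(L(P),N)$ factors through $\Ext^1_R(F_w,N)$. That group vanishes because $\Ext^1_R(T,N)=0$ for $\GV$-torsion $T$ and $w$-modules $N$.
\item This gives $w$-projectivity straight from the definition.
\item You then use the lemma, i.e.\ \cite[Theorem~6.7.11]{WK24}, to get $w$-splitness.
\end{itemize}
This is sound. It never needs $L(F)=F$, and it works for every $w$-module $N$, not only torsion-free ones. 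The cost is that the $w$-split conclusion is outsourced to the deeper half of the lemma.

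The ``main obstacle'' that led you there does not exist. $R$ is a $w$-module, and direct sums of $w$-modules are $w$-modules because $\GV$-ideals are finitely generated. Hence $F_w=F$, $L(g)=g$, and $K$ is a $w$-module too. So your $u_k$ already land in $F$. The identities $g u_k=d_k\cdot 1_{L(P)}$ are then literally the definition of $0\to K\to F\to L(P)\to 0$ being $w$-split. That gives $L(P)$ $w$-split in one line, with no citation. This is what the paper's appeal to \cite{WQ20} encodes; note that it writes $\Hom_R(L(P),F)$ for $H_w(F)$.

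Your first fallback (``it suffices that $0\to K_w\to F_w\to L(P)\to 0$ is $w$-split'') is valid only because $F_w=F$ is projective, which you did not justify. You rightly abandon the second fallback. In a write-up, drop both and finish either with the one-line definitional argument or with the $\Ext$ computation alone.
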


\begin{proof}
By Lemma~\ref{lem:w-pro-w-split}, it suffices to show that
$L(P)$ is $w$-split.
Let $g:F\to L(P)$ be an epimorphism with $F$ projective.
The exactness of $H_w(-)$ applied to
\[
0\longrightarrow \ker(g)
\longrightarrow F
\longrightarrow L(P)
\longrightarrow 0
\]
yields an exact sequence
\[
\Hom_R\big(L(P),F\big)
\xrightarrow{g_*}
\Hom_R\big(L(P),L(P)\big)
\longrightarrow 0
\]
in $(R,w)\text{-}\mathrm{mod}$.
Thus $\coker(g_*)$ is $\GV$-torsion.
By \cite[Proposition~2.4]{WQ20},
this implies that $L(P)$ is $w$-split.
\end{proof}

\begin{remark}\label{rem:standing}
Let $R$ be a commutative ring and let $P$ be a $w$-projective
$R$-module of \emph{finite type}, that is, there exists a $w$-exact
sequence
\[
F\longrightarrow P\longrightarrow 0,
\]
where $F$ is finitely generated free.
Then \cite[Theorem~2.19]{WK15} shows that $P$ is of
\emph{finitely presented type}, meaning that there exists a
$w$-exact sequence
\[
F_1\longrightarrow F_0\longrightarrow P\longrightarrow 0,
\]
where $F_0$ and $F_1$ are finitely generated free modules.
Consequently, by \cite[Theorem~4.6]{QW15}, under these hypotheses the functor
\[
H_w(-)=\hom_w(P,-):
\Mod(R)\longrightarrow (R,w)\text{-}\mathrm{mod}
\]
is exact and annihilates $\mathcal T(R)$.
\end{remark}

\begin{proposition}
\label{prop:F(P)-proj-classical-1}
Let $R$ be a commutative ring and let
$P\in (R,w)\text{-}\mathrm{mod}$.
Then the following conditions are equivalent:
\begin{enumerate}
\item[\textup{(1)}]
$P$ is a projective object of $(R,w)\text{-}\mathrm{mod}$.

\item[\textup{(2)}]
For every morphism $f:B\to C$ in $\Mod(R)$
with $\coker(f)\in\mathcal T(R)$,
the induced map
\[
\Hom_R(P,L(B))
\longrightarrow
\Hom_R(P,L(C))
\]
is surjective in $(R,w)\text{-}\mathrm{mod}$.
Equivalently, every morphism $P\to L(C)$ lifts along $L(f)$.

\item[\textup{(3)}]
The functor
\[
H_w(-)=\Hom_R(P,L(-))
\]
is exact.
\end{enumerate}
\end{proposition}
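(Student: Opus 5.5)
We will prove the cycle (1)$\Rightarrow$(3)$\Rightarrow$(2)$\Rightarrow$(1). The proof rests on the adjunction $L\dashv i$: for $P$ a $w$-module and any $R$-module $X$,
\[
\Hom_R(P,L(X))\cong \Hom_{(R,w)\text{-}\mathrm{mod}}(P,L(X)),
\]
and a sequence in $(R,w)\text{-}\mathrm{mod}$ is exact there if and only if its $\Mod(R)$-homology is $\GV$-torsion. The basic fact we use is that $L$ is exact from $\Mod(R)$ to the abelian category $(R,w)\text{-}\mathrm{mod}$ (property (3) of the list above). So $L$ sends any exact sequence of $R$-modules to an exact sequence of $w$-modules, and it sends any map with $\GV$-torsion cokernel to an epimorphism of $(R,w)\text{-}\mathrm{mod}$.

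\textbf{(1)$\Rightarrow$(3).} If $P$ is projective in $(R,w)\text{-}\mathrm{mod}$, then $\Hom(P,-)$ is exact on $(R,w)\text{-}\mathrm{mod}$. Composing with the exact functor $L$ shows that $H_w$ sends exact sequences in $\Mod(R)$ to exact sequences in $(R,w)\text{-}\mathrm{mod}$.

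\textbf{(3)$\Rightarrow$(2).} Let $f:B\to C$ have $\coker(f)\in\mathcal T(R)$. Factor it as $B\twoheadrightarrow \Im(f)\hookrightarrow C$. Applying $H_w$ to the exact sequence $B\to C\to \coker(f)\to 0$ gives an exact sequence
\[
H_w(B)\to H_w(C)\to H_w(\coker f)=0.
\]
The last term vanishes because $L(\coker f)=0$. Hence $H_w(f)$ is an epimorphism in $(R,w)\text{-}\mathrm{mod}$. Here we must be careful: epimorphism in $(R,w)\text{-}\mathrm{mod}$ means $\GV$-torsion cokernel, not literal surjectivity. The statement of (2) says ``surjective in $(R,w)\text{-}\mathrm{mod}$'', so this is exactly what is required. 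The ``equivalently, lifts along $L(f)$'' clause should then be read in the $w$-sense, i.e.\ after this identification.

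\textbf{(2)$\Rightarrow$(1).} This is the main step. Let $\pi:X\to Y$ be an epimorphism in $(R,w)\text{-}\mathrm{mod}$, so $\coker(\pi)\in\mathcal T(R)$. Apply (2) with $B=X$, $C=Y$ and $f=\pi$. Since $L(X)=X$ and $L(Y)=Y$, we get that $\pi_*:\Hom_R(P,X)\to\Hom_R(P,Y)$ is an epimorphism in $(R,w)\text{-}\mathrm{mod}$. Therefore $\Hom(P,-)$ preserves epimorphisms, and it is left exact as a right adjoint-type Hom. Hence it is exact on $(R,w)\text{-}\mathrm{mod}$, i.e.\ $P$ is projective.

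The obstacle is the same in (3)$\Rightarrow$(2) and (2)$\Rightarrow$(1). Projectivity of $P$ in the categorical sense requires lifting individual morphisms $P\to Y$, which is surjectivity of $\pi_*$ on underlying sets of the Hom-groups. The argument above only yields surjectivity modulo $\GV$-torsion. The planned remedy is to pass to global Hom-sets. The Hom-group $\Hom_{(R,w)}(P,Y)$ is itself a $w$-module, and $\Hom_R(P,-)$ of the exact-in-$(R,w)$ sequence
\[
0\to K\to X\to Y\to 0
\]
has obstruction living in $\Ext^1$ computed in $(R,w)\text{-}\mathrm{mod}$. If literal lifting is needed, one would first try the $w$-split characterization (\cite[Proposition~2.4]{WQ20}, Lemma~\ref{lem:w-pro-w-split}) to promote ``cokernel $\GV$-torsion'' to genuine liftings. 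Failing that, the equivalence should be interpreted, as the statement suggests, with surjectivity taken in $(R,w)\text{-}\mathrm{mod}$, and projectivity understood accordingly. We will state this interpretation explicitly and verify each implication under it.
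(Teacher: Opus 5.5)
Your three implications are the same one-line arguments the paper gives: exactness of $L$, $L(X)=X$ for $w$-modules, and $L(\coker f)=0$. The paper just runs the cycle as (1)$\Rightarrow$(2)$\Rightarrow$(3)$\Rightarrow$(1).

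The subtlety you flag is real, and the paper does not resolve it either. Its step (3)$\Rightarrow$(1) deduces projectivity from the fact that $\Hom_R(P,B)\to\Hom_R(P,C)$ is an epimorphism \emph{in} $(R,w)\text{-}\mathrm{mod}$, i.e.\ has $\GV$-torsion cokernel. So what the paper actually proves is the $w$-reading of (1): $\Hom_R(P,-)$ is exact as a functor $(R,w)\text{-}\mathrm{mod}\to(R,w)\text{-}\mathrm{mod}$. The implications (1)$\Rightarrow$(3)$\Rightarrow$(2) hold under either reading. Only the return to (1), and the lifting clause in (2), need the $w$-reading.

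Do not pursue the remedy via $w$-splitting. No argument can upgrade to genuine lifts, because the literal reading is false. Here is a counterexample.
\begin{itemize}
\item Take $R=k[x,y]$ with $k$ a field. Then $J=(x,y)\in\GV(R)$, since $\Hom_R(J,R)\cong J^{-1}=R$.
\item Let $P=R$ and let $\pi:R^2\to R$ be $(a,b)\mapsto ax+by$.
\item Both $R^2$ and $R$ are $w$-modules, and $\coker\pi=R/J$ is $\GV$-torsion. Hence $\pi=L(\pi)$ is an epimorphism in $(R,w)\text{-}\mathrm{mod}$.
\item Yet $1_R$ does not lift along $\pi$, because $\pi(R^2)=J\neq R$.
\end{itemize}
So $R$ is not a projective object in the categorical sense, and the clause ``every morphism $P\to L(C)$ lifts along $L(f)$'' in (2) fails. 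Meanwhile $H_w=\Hom_R(R,L(-))\cong L$ is exact, so (3) holds. The $w$-split characterization you mention cannot help: it only ever yields relations $d_k\,g=\pi\circ h_k$ with $(d_1,\dots,d_n)\in\GV(R)$, never $g=\pi\circ h$.

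So state the interpretation explicitly: projectivity in (1) and surjectivity/lifting in (2) are meant in $(R,w)\text{-}\mathrm{mod}$, i.e.\ modulo $\GV$-torsion. Under that reading your cycle is complete. The left exactness you invoke in (2)$\Rightarrow$(1) holds because kernels of maps between $w$-modules are $w$-modules, so kernels in $(R,w)\text{-}\mathrm{mod}$ are computed in $\Mod(R)$.
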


\begin{proof}
\noindent
(1)$\Rightarrow$(2).
Let $f:B\to C$ be a morphism in $\Mod(R)$
with $\coker(f)\in\mathcal T(R)$.
Applying $L$ yields an exact sequence
\[
L(B)\longrightarrow L(C)\longrightarrow L(\coker(f))=0
\]
in $(R,w)\text{-}\mathrm{mod}$.
Since $P$ is projective in $(R,w)\text{-}\mathrm{mod}$,
the induced map
\[
\Hom_R(P,L(B))
\longrightarrow
\Hom_R(P,L(C))
\]
is surjective.

\medskip
\noindent
(2)$\Rightarrow$(3).
Let
\[
0\longrightarrow A
\longrightarrow B
\longrightarrow C
\longrightarrow 0
\]
be an exact sequence in $\Mod(R)$.
Applying $L$ yields an exact sequence in $(R,w)\text{-}\mathrm{mod}$.
By the lifting property, the induced sequence
\[
0\longrightarrow
\Hom_R(P,L(A))
\longrightarrow
\Hom_R(P,L(B))
\longrightarrow
\Hom_R(P,L(C))
\longrightarrow 0
\]
is exact.
Hence $H_w(-)$ is exact.

\medskip
\noindent
(3)$\Rightarrow$(1).
Let
\[
B\longrightarrow C\longrightarrow 0
\]
be an exact sequence in $(R,w)\text{-}\mathrm{mod}$.
Since $H_w(-)$ is exact on $\Mod(R)$,
its restriction to $(R,w)\text{-}\mathrm{mod}$
is also exact.
Thus
\[
\Hom_R(P,B)
\longrightarrow
\Hom_R(P,C)
\longrightarrow 0
\]
is exact in $(R,w)\text{-}\mathrm{mod}$,
so $P$ is projective in $(R,w)\text{-}\mathrm{mod}$.
\end{proof}

\begin{corollary}\label{cor:projective-obj}
Every $w$-projective $w$-module of finite type
(for example, a finitely generated projective $R$-module)
is a projective object in $(R,w)\text{-}\mathrm{mod}$.
\end{corollary}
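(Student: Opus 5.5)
The plan is to combine Remark~\ref{rem:standing} with Proposition~\ref{prop:F(P)-proj-classical-1}.

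Let $P$ be a $w$-projective $w$-module of finite type. Since $P$ is a $w$-module, it is $\GV$-torsion-free and $P=P_w$. Hence $L(P)=P$, so
\[
H_w(-)=\hom_w(P,-)=\Hom_R(P,L(-)).
\]
By Remark~\ref{rem:standing}, the finite-type hypothesis upgrades $P$ to finitely presented type (via \cite[Theorem~2.19]{WK15}), and then \cite[Theorem~4.6]{QW15} shows that $H_w(-)$ is exact on $\Mod(R)$ and annihilates $\mathcal T(R)$. This is exactly condition (3) of Proposition~\ref{prop:F(P)-proj-classical-1} for the object $P\in(R,w)\text{-}\mathrm{mod}$. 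The implication (3)$\Rightarrow$(1) of that proposition then gives that $P$ is projective in $(R,w)\text{-}\mathrm{mod}$.

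For the parenthetical example, let $P$ be a finitely generated projective $R$-module.
\begin{enumerate}
\item[(a)] $P$ is a direct summand of some $R^n$. Since $R$ is $\GV$-torsion-free and a $w$-module, so is $R^n$, and both properties pass to direct summands. Concretely, $(R^n)_w=R^n$ and the $w$-envelope is compatible with finite direct sums, so $P_w=P$. Thus $P$ is a $w$-module.
\item[(b)] $P$ is of finite type, because it is finitely generated: the surjection $R^n\to P$ is an honest exact sequence, hence $w$-exact.
\item[(c)] $P$ is $w$-projective: projective modules are $w$-projective, since $\Ext^1_R(L(P),N)=\Ext^1_R(P,N)=0$ for all $N$. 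Alternatively, $P$ is split, hence $w$-split, and Lemma~\ref{lem:w-pro-w-split} applies.
\end{enumerate}
The first part then applies to $P$.

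The main obstacle is that all the real content sits in the cited exactness result \cite[Theorem~4.6]{QW15}, which is invoked through Remark~\ref{rem:standing}. The one point to check in this proof is that the remark's hypotheses match the present situation. The remark assumes that $P$ is $w$-projective of finite type and concludes exactness of $\hom_w(P,-)=\Hom_R(L(P),L(-))$; with $L(P)=P$ this is the functor appearing in Proposition~\ref{prop:F(P)-proj-classical-1}(3).

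If one wanted to avoid the remark, the fallback would be a direct argument. By Lemma~\ref{lem:w-pro-w-split}, $P$ is $w$-split. Given an epimorphism $B\to C$ in $(R,w)\text{-}\mathrm{mod}$ and a map $P\to C$, one would build local lifts using the $\GV$-ideal $J=(d_1,\dots,d_n)$ attached to the $w$-split sequence. One would then show that the cokernel of $\Hom_R(P,B)\to\Hom_R(P,C)$ is $\GV$-torsion (finite type is what makes it $J$-torsion) as well as $\GV$-torsion-free, and therefore zero. I expect this cokernel argument to be the delicate step of the fallback route.
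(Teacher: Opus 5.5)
Your argument is exactly the derivation the paper intends. The corollary is stated without proof right after Proposition~\ref{prop:F(P)-proj-classical-1}, to be read off from Remark~\ref{rem:standing} together with the implication (3)$\Rightarrow$(1) of that proposition. The gap is that this implication is not valid, so the step you rely on fails.

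Remark~\ref{rem:standing} says that $H_w(-)$ is exact as a functor $\Mod(R)\to(R,w)\text{-}\mathrm{mod}$. Apply this to an epimorphism $B\to C$ in $(R,w)\text{-}\mathrm{mod}$, that is, a map of $w$-modules whose cokernel in $\Mod(R)$ is $\GV$-torsion. It only tells you that $\Hom_R(P,B)\to\Hom_R(P,C)$ is an epimorphism \emph{in $(R,w)\text{-}\mathrm{mod}$}, i.e.\ that its cokernel is $\GV$-torsion. Projectivity of $P$ as an object of $(R,w)\text{-}\mathrm{mod}$ needs this map to be genuinely surjective: every morphism $P\to C$ must lift. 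Your fallback paragraph correctly isolates this point. However, the claim that the cokernel is also $\GV$-torsion-free is exactly what breaks down. For $P=R$, the cokernel of $\Hom_R(R,B)\to\Hom_R(R,C)$ is $\coker(B\to C)$ itself, which is $\GV$-torsion and in general nonzero.

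In fact the statement cannot be proved as written: it fails over every ring that is not a DW-ring, already for the free module $R$. Choose $J=(d_1,\dots,d_n)\in\GV(R)$ with $J\neq R$, and let $\pi:R^n\to R$ send $(r_1,\dots,r_n)$ to $\sum_i r_id_i$.
\begin{itemize}
\item Both $R^n$ and $R$ are $w$-modules (Remark~\ref{rem:LR=R}, plus closure under finite direct sums).
\item $\pi$ is an epimorphism in $(R,w)\text{-}\mathrm{mod}$. If $X$ is a $w$-module and $g:R\to X$ satisfies $g\pi=0$, then $Jg(1)=0$, so $g(1)\in\tor_{\GV}(X)=0$.
\item If $R$ were projective in $(R,w)\text{-}\mathrm{mod}$, then $1_R=\pi\circ s$ for some $s:R\to R^n$. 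This gives $1=\pi(s(1))\in J$, a contradiction.
\end{itemize}
Concretely, take $R=k[x,y]$ and $J=(x,y)$. So $R$ is a projective object of $(R,w)\text{-}\mathrm{mod}$ only when $R$ is a DW-ring, in which case $(R,w)\text{-}\mathrm{mod}=\Mod(R)$.

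What your inputs do establish is a weaker ``$w$-lifting'' property. For every epimorphism $B\to C$ of $w$-modules, the cokernel of $\Hom_R(P,B)\to\Hom_R(P,C)$ is $\GV$-torsion. Equivalently, each $\varphi:P\to C$ becomes liftable after multiplying by the elements of some $\GV$-ideal. Your checks (a)--(c) on finitely generated projective modules are fine. The problem lies entirely in passing from this $w$-exactness to genuine projectivity.
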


\begin{definition}
\label{def:w-faith-proj}
Let $R$ be a commutative ring.
An $R$-module $P$ is called \emph{$w$-faithfully projective}
if the functor
\[
H_w(-)=\hom_w(P,-):
\Mod(R)\longrightarrow (R,w)\text{-}\mathrm{mod}
\]
is $w$-faithfully exact.
Equivalently, the object $L(P)$ is faithfully projective
in the Grothendieck category $(R,w)\text{-}\mathrm{mod}$.
\end{definition}

\begin{remark}\label{rem:w-faith-pro-closed-w-iso}
Since
\[
\hom_w(P,-)=\Hom_R(L(P),L(-)),
\]
the notion of $w$-faithfully projective modules depends only on the
$w$-envelope $L(P)$.
In particular, replacing $P$ by any module having the same image under
$L(-)$ does not affect this property.
For example, if $f:P\to P'$ is a $w$-isomorphism of $R$-modules,
then $L(P)\cong L(P')$,
and hence $P$ is $w$-faithfully projective if and only if
$P'$ is $w$-faithfully projective.
\end{remark}

\begin{proposition}
\label{prop:w-faith-projective-local}
Let $\mathfrak m$ range over $w\text{-}\Max(R)$ and let
$P$ be an $R$-module of finitely presented type.
Then $P$ is $w$-faithfully projective if and only if
\begin{enumerate}
\item[\textup{(i)}] $L(P)$ is projective in $(R,w)\text{-}\mathrm{mod}$, and
\item[\textup{(ii)}] $P_{\mathfrak m}\neq 0$ for every $\mathfrak m\in w\text{-}\Max(R)$.
\end{enumerate}
The latter condition is equivalent to
\[
\Hom_{R_{\mathfrak m}}\!\big(P_{\mathfrak m},
R_{\mathfrak m}/\mathfrak mR_{\mathfrak m}\big)\neq 0
\quad\text{for all }\mathfrak m\in w\text{-}\Max(R).
\]
In particular, it suffices that $P$ is $w$-projective as an $R$-module and
\[
\Supp_R(P)\supseteq w\text{-}\Max(R).
\]
\end{proposition}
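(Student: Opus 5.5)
The argument runs through Theorem~\ref{thm:w-Ishi-11} applied to $G=H_w(-)$, with Proposition~\ref{prop:F(P)-proj-classical-1} controlling exactness. For the forward direction, assume $P$ is $w$-faithfully projective. Then $H_w(-)$ is exact by definition, so Proposition~\ref{prop:F(P)-proj-classical-1} (applied to the $w$-module $L(P)$) shows that $L(P)$ is projective in $(R,w)\text{-}\mathrm{mod}$, which is (i). For (ii), Theorem~\ref{thm:w-Ishi-11}(6) gives $\Hom_R(L(P),L(R/\mathfrak m))\neq 0$ for every $\mathfrak m\in w\text{-}\Max(R)$. Since $R/\mathfrak m$ is a $\GV$-torsion-free $w$-module, $L(R/\mathfrak m)=R/\mathfrak m$, so there is a nonzero map $L(P)\to R/\mathfrak m$. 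I will then argue that $P_{\mathfrak m}=0$ would force $L(P)_{\mathfrak m}=0$, because $P\to L(P)$ is a $w$-isomorphism and localization at a maximal $w$-ideal kills $\GV$-torsion. Any map $L(P)\to R/\mathfrak m$ then localizes to zero; since $R/\mathfrak m\hookrightarrow (R/\mathfrak m)_{\mathfrak m}$ is injective ($R/\mathfrak m$ is a domain), the map is zero. This contradiction gives $P_{\mathfrak m}\neq 0$.

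For the converse, assume (i) and (ii). By Proposition~\ref{prop:F(P)-proj-classical-1}, (i) gives exactness of $H_w(-)=\Hom_R(L(P),L(-))$, and $H_w$ annihilates $\mathcal T(R)$ because $L$ does. Theorem~\ref{thm:w-Ishi-11} therefore reduces the claim to showing $\Hom_R(L(P),R/\mathfrak m)\neq 0$ for each $\mathfrak m$. Here finitely presented type is used. Take a $w$-exact sequence $F_1\to F_0\to P\to 0$ with $F_i$ finitely generated free; applying $L$ and the left exactness of $\Hom$ identifies $\Hom_R(L(P),N)$ with $\Hom_R(P,N)$ for every $w$-module $N$, up to $\GV$-torsion. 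Localizing at $\mathfrak m$ gives the standard isomorphism
\[
\Hom_R(L(P),N)_{\mathfrak m}\cong \Hom_{R_{\mathfrak m}}(P_{\mathfrak m},N_{\mathfrak m}),
\]
which is where \cite[Theorem~4.6]{QW15}-type results and Remark~\ref{rem:standing} enter. With $N=R/\mathfrak m$ we have $N_{\mathfrak m}=R_{\mathfrak m}/\mathfrak mR_{\mathfrak m}$. Since $P_{\mathfrak m}$ is a nonzero finitely presented module over a local ring, Nakayama gives $P_{\mathfrak m}/\mathfrak mP_{\mathfrak m}\neq 0$, so it has a nonzero linear functional to the residue field. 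The localization is therefore nonzero, and hence so is $\Hom_R(L(P),R/\mathfrak m)$.

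The same Nakayama argument gives the stated equivalence of (ii) with the nonvanishing of $\Hom_{R_{\mathfrak m}}(P_{\mathfrak m},R_{\mathfrak m}/\mathfrak mR_{\mathfrak m})$. The reverse implication is trivial, since a nonzero map out of $P_{\mathfrak m}$ forces $P_{\mathfrak m}\neq0$. For the final ``in particular'', $w$-projectivity together with finitely presented type makes $H_w$ exact by Remark~\ref{rem:standing}, so $L(P)$ is projective by Proposition~\ref{prop:F(P)-proj-classical-1}, giving (i). The support condition is literally (ii).

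The main obstacle is the localization isomorphism for $\Hom$ out of a module of finitely presented type only up to $w$-exactness. I would first replace $P$ by $\coker(F_1\to F_0)$, which is $w$-isomorphic to $P$ and hence has the same $L$ and the same localizations by Remark~\ref{rem:w-faith-pro-closed-w-iso}. Then I would use the classical isomorphism $\Hom_R(M,N)_{\mathfrak m}\cong\Hom_{R_{\mathfrak m}}(M_{\mathfrak m},N_{\mathfrak m})$ for finitely presented $M$. Finally I would compare $\Hom_R(M,N)$ with $\Hom_R(L(M),N)$ for a $w$-module $N$. These agree by the adjunction defining $L$ (the reflector is left adjoint to the inclusion), and that adjunction is what makes the argument work.
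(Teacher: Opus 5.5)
Your architecture matches the paper's: exactness via Proposition~\ref{prop:F(P)-proj-classical-1}, faithfulness via Theorem~\ref{thm:w-Ishi-11}(6), then localization and Nakayama. However, it rests on a false claim: that $R/\mathfrak m$ is a $w$-module, so that $L(R/\mathfrak m)=R/\mathfrak m$.

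For a maximal $w$-ideal this fails in general. Take $R=k[x,y]$ and $\mathfrak m=(x)\in w\text{-}\Max(R)$. Inside $k(y)\subseteq \E(R/\mathfrak m)$, the $\GV$-ideal $J=(x,y)$ satisfies $J\cdot y^{-1}\subseteq k[y]=R/\mathfrak m$. Hence $y^{-1}\in (R/\mathfrak m)_w\setminus R/\mathfrak m$; in fact $(R/\mathfrak m)_w=k(y)$. So the object Theorem~\ref{thm:w-Ishi-11}(6) concerns is $H_w(R/\mathfrak m)=\Hom_R(L(P),(R/\mathfrak m)_w)$, and this is the module the paper's proof works with.

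In your forward direction the slip is harmless. The map $(R/\mathfrak m)_w\to((R/\mathfrak m)_w)_{\mathfrak m}$ is still injective, because $(R/\mathfrak m)_w$ is $\GV$-torsion-free and vanishes at every other maximal $w$-ideal (Lemma~\ref{lem:kernel=GV}). With that substitution your argument goes through, and it needs no finiteness hypothesis on $P$.

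In the converse, however, the slip is load-bearing. The identification $\Hom_R(L(P),N)\cong\Hom_R(P,N)$ comes from the adjunction and requires $N$ to be a $w$-module. Your derivation of $\Hom_R(L(P),R/\mathfrak m)\neq 0$ uses only finitely presented type and (ii), never (i), so as written it proves something false. Over $k[x,y]$ with $\mathfrak m=(x)$, the finitely presented module $P=R/(x)$ has $P_{\mathfrak m}=k(y)\neq 0$. Yet $L(P)=k(y)$, and $\Hom_R(k(y),k[y])=0$.

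The fix is to run your argument with $N=(R/\mathfrak m)_w$. It is a $w$-module, and $N_{\mathfrak m}\cong R_{\mathfrak m}/\mathfrak mR_{\mathfrak m}$ because $N/(R/\mathfrak m)$ is $\GV$-torsion. You then reduce to a finitely presented $M$ with $L(M)\cong L(P)$ and apply the classical localization of $\Hom$ together with the adjunction. This yields
\[
\Hom_R\big(L(P),(R/\mathfrak m)_w\big)_{\mathfrak m}\cong\Hom_{R_{\mathfrak m}}\big(P_{\mathfrak m},R_{\mathfrak m}/\mathfrak mR_{\mathfrak m}\big),
\]
which is exactly the isomorphism the paper takes from \cite[Theorem~6.3.28(2)]{WK24}. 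Your Nakayama step then finishes the proof as in the paper.
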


\begin{proof}
Let
\[
H_w(-)=\hom_w(P,-):
\Mod(R)\longrightarrow (R,w)\text{-}\mathrm{mod}.
\]

\medskip
\noindent
\emph{Step 1: Exactness of $H_w$ and projectivity of $L(P)$.}

For $X\in (R,w)\text{-}\mathrm{mod}$ we have $L(X)=X$, hence
\[
H_w(X)=\hom_w(P,X)=\Hom_R\!\big(L(P),X\big).
\]
Thus $H_w$ is exact on $(R,w)\text{-}\mathrm{mod}$
if and only if the functor
$\Hom_R(L(P),-)$
is exact on $(R,w)\text{-}\mathrm{mod}$,
which holds if and only if $L(P)$ is projective in
$(R,w)\text{-}\mathrm{mod}$.

\medskip
\noindent
\emph{Step 2: Faithfulness of $H_w$ and testing on residue objects.}

Assume that $H_w$ is exact.
By Theorem~\ref{thm:w-Ishi-11},
$H_w$ is $w$-faithfully exact if and only if
\[
H_w(R/\mathfrak m)\neq 0
\quad\text{for all }\mathfrak m\in w\text{-}\Max(R).
\]
Using the description of $w$-Hom,
\[
H_w(R/\mathfrak m)
=\hom_w(P,R/\mathfrak m)
=\Hom_R\!\big(L(P),(R/\mathfrak m)_w\big).
\]

Since $(R/\mathfrak m)_w$ is supported only at $\mathfrak m$,
the above group is nonzero if and only if its localization at
$\mathfrak m$ is nonzero:
\[
\Hom_R\!\big(L(P),(R/\mathfrak m)_w\big)_{\mathfrak m}\neq 0.
\]

Because $P$ and $L(P)$ are $w$-isomorphic,
$L(P)$ is also of finitely presented type.
Moreover,
\[
L(P)_{\mathfrak m}\cong P_{\mathfrak m},
\qquad
\big((R/\mathfrak m)_w\big)_{\mathfrak m}
\cong R_{\mathfrak m}/\mathfrak mR_{\mathfrak m}.
\]
Hence, by \cite[Theorem~6.3.28(2)]{WK24},
\[
\Hom_R\!\big(L(P),(R/\mathfrak m)_w\big)_{\mathfrak m}
\cong
\Hom_{R_{\mathfrak m}}\!\big(
P_{\mathfrak m},
R_{\mathfrak m}/\mathfrak mR_{\mathfrak m}
\big).
\]

Therefore,
\[
H_w(R/\mathfrak m)\neq 0
\quad\Longleftrightarrow\quad
\Hom_{R_{\mathfrak m}}\!\big(
P_{\mathfrak m},
R_{\mathfrak m}/\mathfrak mR_{\mathfrak m}
\big)\neq 0.
\]

Since $R_{\mathfrak m}$ is local,
the latter condition holds if and only if
$P_{\mathfrak m}\neq 0$.
This establishes the equivalence.

\medskip
\noindent
\emph{Step 3: The ``in particular'' statement.}

If $P$ is $w$-projective as an $R$-module,
then by Remark~\ref{rem:standing} the functor $H_w(-)$ is exact.
Moreover,
\[
\mathfrak m\in\Supp_R(P)
\quad\Longleftrightarrow\quad
P_{\mathfrak m}\neq 0.
\]
Thus the condition
$\Supp_R(P)\supseteq w\text{-}\Max(R)$
implies that $P$ is $w$-faithfully projective.
\end{proof}

\begin{corollary}\label{cor:examples-of-w-faith-pro}
Every nonzero finitely generated free $R$-module is $w$-faithfully projective.
\end{corollary}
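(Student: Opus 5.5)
The plan is to apply Proposition~\ref{prop:w-faith-projective-local} with $P=R^n$, $n\geq 1$. The first task is to check that $P$ is of finitely presented type. This is immediate, since
\[
0\longrightarrow R^n\xrightarrow{\ \mathrm{id}\ }R^n\longrightarrow 0
\]
is an exact, hence $w$-exact, presentation by finitely generated free modules. Then, taking the conditions of Proposition~\ref{prop:w-faith-projective-local} in turn, we show that $L(P)$ is projective in $(R,w)\text{-}\mathrm{mod}$ and that $P_{\mathfrak m}\neq 0$ for every $\mathfrak m\in w\text{-}\Max(R)$.

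For condition (i), we use the ``in particular'' clause of Proposition~\ref{prop:w-faith-projective-local}, so it suffices that $P$ is $w$-projective. A free module is projective, so $\Ext^1_R(P,N)=0$ for all $N$. Since $R^n$ is torsion-free, $L(R^n)=(R^n)_w=(R_w)^n$, and $R_w=R$ because $R$ is a $w$-module (a standard fact from \cite{WK24}). Hence $\Ext^1_R(L(P),N)=0$, which is $\GV$-torsion, and $P$ is $w$-projective. Alternatively, Corollary~\ref{cor:projective-obj} already asserts that a finitely generated projective module is a projective object of $(R,w)\text{-}\mathrm{mod}$. Since $L(P)=P$ here, this gives (i) directly.

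For condition (ii), note that $P_{\mathfrak m}\cong R_{\mathfrak m}^n$. This is nonzero because $\mathfrak m$ is a proper ideal, so $R_{\mathfrak m}\neq 0$. Equivalently, $\Supp_R(R^n)=\Spec(R)\supseteq w\text{-}\Max(R)$.

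The only point that needs care is the identification $L(R^n)=R^n$, that is, $R$ being a $w$-module. If one prefers not to rely on it, there is a direct route. Use Theorem~\ref{thm:w-Ishi-11}(6) together with Remark~\ref{rem:standing}, which gives exactness of $H_w$. Then observe that
\[
\hom_w(R^n,R/\mathfrak m)\cong L(R/\mathfrak m)^n\supseteq (R/\mathfrak m)^n\neq 0,
\]
where $R/\mathfrak m$ is $\GV$-torsion-free because $\mathfrak m$ is a $w$-ideal. If $W\!\text{-}\Max(R)$ happens to be empty (so $R=0$), the statement is vacuous.
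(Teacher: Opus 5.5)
Your proposal is correct and takes essentially the same route as the paper. The paper also checks the two hypotheses of Proposition~\ref{prop:w-faith-projective-local} for $P=R^n$: projectivity of $L(P)=P$ in $(R,w)\text{-}\mathrm{mod}$ comes from Corollary~\ref{cor:projective-obj}, and the support condition from $\Supp_R(R^n)=\Spec(R)\supseteq w\text{-}\Max(R)$. Your alternative route through Theorem~\ref{thm:w-Ishi-11}(6) is also sound, provided you justify $\hom_w(R^n,X)\cong L(X)^n$ by the adjunction $L\dashv i$ (which does not need $L(R)=R$).
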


\begin{proof}
Let $P$ be a nonzero finitely generated free $R$-module.
Then $P$ is projective and of finite type, hence $w$-projective.
By Corollary~\ref{cor:projective-obj}, $P$ is a projective object in
$(R,w)\text{-}\mathrm{mod}$.

Moreover, since $P\cong R^n$ for some $n\ge 1$, we have
\[
\Supp_R(P)=\Spec(R).
\]
In particular,
\[
\Supp_R(P)\supseteq w\text{-}\Max(R).
\]
Therefore, by Proposition~\ref{prop:w-faith-projective-local},
$P$ is $w$-faithfully projective.
\end{proof}

\begin{theorem}\textup{($w$-version of Ishikawa's \cite[Theorem~2.2]{I64})}
\label{thm:w-Ishi-22}
Let $R$ be a commutative ring and let $P$ be a $w$-projective
$R$-module of finite type.
Then the following conditions are equivalent:
\begin{enumerate}
\item[\textup{(1)}]
$P$ is \emph{$w$-faithfully projective}.

\item[\textup{(2)}]
$\hom_w(P,A)\neq 0$ for every nonzero $w$-module $A$.

\item[\textup{(3)}]
For every nonzero morphism $\varphi:C\to D$
in $(R,w)\text{-}\mathrm{mod}$,
the induced map
\[
\Hom_R(L(P),C)\xrightarrow{\ \varphi\circ -\ }
\Hom_R(L(P),D)
\]
is nonzero; equivalently,
there exists $\psi\in\Hom_R(L(P),C)$
such that $\varphi\psi\neq 0$.

\item[\textup{(4)}]
$\hom_w(P,B)\neq 0$
for every nonzero $\GV$-torsion-free $R$-module $B$.

\item[\textup{(5)}]
$\hom_w(P,R/I)\neq 0$
for every proper $w$-ideal $I$ of $R$.

\item[\textup{(6)}]
$\hom_w(P,R/\mathfrak m)\neq 0$
for every $\mathfrak m\in w\text{-}\Max(R)$.

\item[\textup{(7)}]
\emph{(Local criterion)}
$P_{\mathfrak m}\neq 0$
for every $\mathfrak m\in w\text{-}\Max(R)$;
equivalently,
\[
w\text{-}\Max(R)\subseteq \Supp_R(P).
\]
\end{enumerate}
\end{theorem}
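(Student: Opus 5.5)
The plan is to reduce almost everything to Theorem~\ref{thm:w-Ishi-11}, applied to the covariant functor $G=H_w(-)=\hom_w(P,-)$, and then to add the local criterion (7) using Proposition~\ref{prop:w-faith-projective-local}.

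\textbf{Step 1: exactness of $H_w$.} Since $P$ is $w$-projective of finite type, Remark~\ref{rem:standing} gives that $P$ is of finitely presented type. The same remark shows that $H_w(-)$ is exact on $\Mod(R)$ and annihilates $\mathcal T(R)$. So $G=H_w$ satisfies the standing hypotheses of Theorem~\ref{thm:w-Ishi-11}, with $\mathcal A=(R,w)\text{-}\mathrm{mod}$, which is abelian (indeed Grothendieck).

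\textbf{Step 2: translating conditions (1)--(6).} Conditions (1), (2), (4), (5), (6) of the present theorem are, word for word, conditions (1), (2), (4), (5), (6) of Theorem~\ref{thm:w-Ishi-11} for $G=H_w$, after unwinding $H_w(X)=\hom_w(P,X)$ and Definition~\ref{def:w-faith-proj}. Condition (3) needs one more remark. For $C,D$ in $(R,w)\text{-}\mathrm{mod}$ we have $L(C)=C$ and $L(D)=D$, so $H_w(\varphi)$ is exactly the post-composition map
\[
\Hom_R(L(P),C)\xrightarrow{\ \varphi\circ -\ }\Hom_R(L(P),D).
\]
This map is nonzero precisely when some $\psi$ has $\varphi\psi\neq0$. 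That matches condition (3) of Theorem~\ref{thm:w-Ishi-11}, and so (1)--(6) are equivalent.

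\textbf{Step 3: the local criterion (7).} Because $P$ is of finitely presented type and $H_w$ is exact, Step~1 of the proof of Proposition~\ref{prop:w-faith-projective-local} shows that $L(P)$ is projective in $(R,w)\text{-}\mathrm{mod}$. Step~2 of that proof then gives
\[
H_w(R/\mathfrak m)\neq 0 \iff \Hom_{R_{\mathfrak m}}\big(P_{\mathfrak m},R_{\mathfrak m}/\mathfrak mR_{\mathfrak m}\big)\neq 0 \iff P_{\mathfrak m}\neq 0 .
\]
The last equivalence uses Nakayama's lemma: $P_{\mathfrak m}$ is finitely generated over the local ring $R_{\mathfrak m}$, so if it is nonzero it surjects onto the residue field. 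This yields (6)$\Leftrightarrow$(7). Alternatively, one can quote Proposition~\ref{prop:w-faith-projective-local} directly to get (1)$\Leftrightarrow$(7).

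\textbf{Expected obstacle.} The only delicate point is the identification
\[
\Hom_R\big(L(P),(R/\mathfrak m)_w\big)_{\mathfrak m}\cong \Hom_{R_{\mathfrak m}}\big(P_{\mathfrak m},R_{\mathfrak m}/\mathfrak mR_{\mathfrak m}\big).
\]
This requires $L(P)$ to be of finitely presented type, which is why the finite-type hypothesis together with \cite[Theorem~2.19]{WK15} is needed. It also requires that a nonzero $w$-module supported only at $\mathfrak m$ (such as the Hom module here) is detected by localizing at $\mathfrak m$; this follows from Lemma~\ref{lem:kernel=GV}, since a $\GV$-torsion-free module embeds in the product of its localizations at maximal $w$-ideals. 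Once these two facts are checked, the rest is formal.
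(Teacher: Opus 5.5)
Your proposal is correct and matches the paper's proof: exactness of $H_w$ comes from Remark~\ref{rem:standing}, Theorem~\ref{thm:w-Ishi-11} gives (1)--(6), and Proposition~\ref{prop:w-faith-projective-local} gives (6)$\Leftrightarrow$(7). You also supply two justifications the paper only asserts: finite generation of $P_{\mathfrak m}$ so that Nakayama applies, and detection of the $\GV$-torsion-free Hom module by localizing at $\mathfrak m$ via Lemma~\ref{lem:kernel=GV}.
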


\begin{proof}
By Remark~\ref{rem:standing},
the contravariant functor
\[
H_w(-)=\hom_w(P,-)
\]
is exact and annihilates $\mathcal T(R)$.
Hence the $w$-version of Ishikawa's Theorem~1.1
(Theorem~\ref{thm:w-Ishi-11})
applies to $H_w(-)$ and yields the equivalences
\[
(1)
\Longleftrightarrow
(2)
\Longleftrightarrow
(3)
\Longleftrightarrow
(4)
\Longleftrightarrow
(5)
\Longleftrightarrow
(6).
\]
The equivalence of
(6)
and
(7)
follows from Proposition~\ref{prop:w-faith-projective-local}.
\end{proof}

\begin{example}[A nonzero finitely presented $w$-projective module that is not $w$-faithfully projective]
\label{exa:non-w-faith-pro}
Let $R$ be a commutative ring that is not a DW-ring.
Then there exists a proper $\GV$-ideal $J\subsetneq R$.
Set $Q:=R/J$.
Then $Q$ is a nonzero finitely presented $\GV$-torsion $R$-module.
In particular, $Q$ is $w$-projective.

However, since $Q$ is $\GV$-torsion, we have
\[
Q_{\mathfrak m}=0
\quad\text{for all }
\mathfrak m\in w\text{-}\Max(R).
\]
Thus
\[
w\text{-}\Max(R)\cap\Supp_R(Q)=\varnothing,
\]
so
\[
w\text{-}\Max(R)\nsubseteq \Supp_R(Q).
\]
By Theorem~\ref{thm:w-Ishi-22},
$Q$ is not $w$-faithfully projective.
\end{example}

\begin{remark}
\label{rem:LR=R}
Since $R$ is a $w$-module over itself,
we have $L(R)=R$.
In particular,
\[
\hom_w(P,R)=\Hom_R\!\big(L(P),R\big),
\]
and we may define the $w$-trace of $P$ by
\[
\tr_w(P)
:=
\sum_{f\in\hom_w(P,R)}
f\big(L(P)\big)
\subseteq R.
\]
\end{remark}

\begin{proposition}\textup{($w$-analogue of Ishikawa's \cite[Proposition~2.4]{I64})}
\label{prop:w-Ishikawa24}
With $R$, $P$, and $H_w(-)$ as in Remark~\ref{rem:standing},
the following conditions are equivalent:
\begin{enumerate}
\item[\textup{(1)}]
$P$ is $w$-faithfully projective.

\item[\textup{(2)}]
There exists $n\ge 1$ such that $L(P)^{\oplus n}$
has a $w$-direct summand isomorphic to $R$.
Equivalently, there exist morphisms in $(R,w)\text{-}\mathrm{mod}$
\[
e:\ L(P)^{\oplus n}\longrightarrow R,
\qquad
s:\ R\longrightarrow L(P)^{\oplus n}
\]
such that $e\circ s=1_R$.

\item[\textup{(3)}]
There exists an infinite index set $I$ such that
\[
\bigoplus_{i\in I} L(P)
\cong
R^{(J)}
\quad\text{in }(R,w)\text{-}\mathrm{mod}
\]
for some index set $J$.
\end{enumerate}

Moreover, if $R$ is $w$-semilocal
(i.e.\ $|\,w\text{-}\Max(R)\,|<\infty$),
then in \textup{(2)} one may take $n=1$.
\end{proposition}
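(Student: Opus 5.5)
Set $P':=L(P)$. By Remark~\ref{rem:standing} and Proposition~\ref{prop:F(P)-proj-classical-1}, $P'$ is a projective object of $(R,w)\text{-}\mathrm{mod}$ and $H_w=\Hom_R(P',L(-))$ is exact. The natural tool is the $w$-trace $\tr_w(P)$ of Remark~\ref{rem:LR=R}. The planned chain is (1)$\Rightarrow$(2)$\Rightarrow$(1), then (2)$\Rightarrow$(3)$\Rightarrow$(1), and finally the semilocal refinement.

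\emph{(1)$\Rightarrow$(2).} First, the trace cannot be small: $(\tr_w(P))_w=R$. Suppose instead that $\tr_w(P)$ is contained in some $\mathfrak m\in w\text{-}\Max(R)$. Any morphism $P'\to R/\mathfrak m$ has nonzero localization at $\mathfrak m$, and by projectivity of $P'$ it lifts along $R\to R/\mathfrak m$. A nonzero $\psi\in\hom_w(P,R/\mathfrak m)$ exists by Theorem~\ref{thm:w-Ishi-22}(6), where $L(R/\mathfrak m)=R/\mathfrak m$ since $\mathfrak m$ is a $w$-ideal. Its lift $f:P'\to R$ then has image not contained in $\mathfrak m$, a contradiction. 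Hence $\tr_w(P)$ lies in no maximal $w$-ideal. By Proposition~\ref{prop:envelope-equiv}(3), or by the quasi-compactness argument used there, there are finitely many $f_1,\dots,f_n\in\Hom_R(P',R)$ and $x_i\in P'$ such that $J=(f_1(x_1),\dots,f_n(x_n))\in\GV(R)$.

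Put $e=(f_1,\dots,f_n):P'^{\oplus n}\to R$. Its image contains $J$, so $\coker(e)$ is $\GV$-torsion and $e$ is an epimorphism in $(R,w)\text{-}\mathrm{mod}$. Since $R=L(R)$ and $\Hom_R(R,-)$ is exact on $(R,w)\text{-}\mathrm{mod}$ (exactness is tested after localization at maximal $w$-ideals), $R$ is projective there. So $1_R$ lifts to $s:R\to P'^{\oplus n}$ with $e\circ s=1_R$; concretely, $s$ comes from Proposition~\ref{prop:F(P)-proj-classical-1}(2) applied to $e$. This is the step I expect to be the main obstacle: making the lift along a map that is only $w$-surjective legitimate, which requires using exactness in $(R,w)\text{-}\mathrm{mod}$ rather than in $\Mod(R)$.

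\emph{(2)$\Rightarrow$(1).} If $A\neq0$ is a $w$-module, then $\Hom_R(R,A)=A$ is a direct summand of $\Hom_R(P',A)^{n}$, so $\hom_w(P,A)\neq0$. Now apply Theorem~\ref{thm:w-Ishi-22}(2).

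\emph{(2)$\Rightarrow$(3).} This is an Eilenberg swindle in $(R,w)\text{-}\mathrm{mod}$, which has coproducts $L(\bigoplus)$. Write $P'^{\oplus n}\cong R\oplus K$ in $(R,w)\text{-}\mathrm{mod}$, with $K=\ker e$. Since $P'$ is a projective of finite type, it should be a $w$-summand of some $R^m$; this needs a companion $w$-split argument via Lemma~\ref{lem:w-pro-w-split}. Given that, the standard swindle produces $\bigoplus_{\mathbb N}P'\cong\bigoplus_{\mathbb N}R$: countable sums of $P'$ absorb $R$ and vice versa.

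\emph{(3)$\Rightarrow$(1).} $\hom_w$ out of a coproduct is a product, so $\prod_I\hom_w(P,A)\cong\prod_J A$. The case $J=\varnothing$ forces $P'=0$ and can be excluded, since then the isomorphism would be with $0$. Hence $\hom_w(P,A)=0$ implies $A=0$.

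\emph{Semilocal case.} Suppose $w\text{-}\Max(R)=\{\mathfrak m_1,\dots,\mathfrak m_k\}$. For each $j$ pick $f_j$ and $x_j$ with $f_j(x_j)\notin\mathfrak m_j$. By prime avoidance, choose $r_j\in\bigcap_{i\ne j}\mathfrak m_i\setminus\mathfrak m_j$, and set $f=\sum_j r_jf_j$ and $x=\sum_j x_j$ (adjusting the cross terms by the same choice). Then $f(P')$ lies in no $\mathfrak m_i$. Since $f(P')\subseteq R$ is then $w$-dense, $f$ is a $w$-epimorphism, and it splits as above with $n=1$.
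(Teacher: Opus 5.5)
The step you flagged as the main obstacle is a genuine gap, and it cannot be closed. It is the same step the paper takes, citing Corollary~\ref{cor:projective-obj}.

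\textbf{Why the lift fails.} Exactness of $\Hom_R(R,-)$ on $(R,w)\text{-}\mathrm{mod}$ only says that $e_*=e$ has $\GV$-torsion cokernel, which is the hypothesis that $e$ is an epimorphism there. An honest $s$ with $e\circ s=1_R$ forces $e$ to be surjective, i.e.\ $1\in\sum_k f_k(L(P))$. If $J_0=(a_1,\dots,a_n)$ is a proper $\GV$-ideal, then $(a_1,\dots,a_n)\colon R^n\to R$ is an epimorphism in $(R,w)\text{-}\mathrm{mod}$ with no section. So $R$ is not a projective object there unless $R$ is DW.

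\textbf{A counterexample to (1)$\Rightarrow$(2).} Let $k$ be a field, $R=k[x^2,xy,y^2]$, and $P=\mathfrak p=(x^2,xy)$, a height-one prime of this normal Noetherian domain.
It is divisorial, hence a $w$-ideal, so $L(P)=\mathfrak p$. It is finitely presented and principal at every height-one prime, and these are exactly the maximal $w$-ideals. Hence $P$ is $w$-projective of finite type with $P_{\mathfrak m}\neq0$ for all $\mathfrak m\in w\text{-}\Max(R)$, and (1) holds by Theorem~\ref{thm:w-Ishi-22}(7).
But $\Hom_R(\mathfrak p,R)=\mathfrak p^{-1}$, and $\mathfrak p\mathfrak p^{-1}\subseteq(x^2,xy,y^2)$ by a degree count: $\mathfrak p^{-1}\subseteq x^{-2}R$ has no nonzero component of degree $-2$. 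So no $e\colon\mathfrak p^{\oplus n}\to R$ is even surjective.

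\textbf{What your construction does prove.} Put $d_k=f_k(x_k)$ and let $s_k\colon R\to L(P)^{\oplus n}$ send $1$ to $x_k$ in the $k$-th slot. Then $e\circ s_k=d_k\cdot1_R$ with $(d_1,\dots,d_n)\in\GV(R)$, so $e$ splits after localizing at each maximal $w$-ideal.

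\textbf{The swindle in (2)$\Rightarrow$(3).} The companion fact you need has the same problem: an epimorphism $R^m\to L(P)$ in $(R,w)\text{-}\mathrm{mod}$ splits only up to a $\GV$-ideal. You are right that the swindle needs both halves; the paper's argument uses only the $R$-summand half. But the missing half is false, and so is (2)$\Rightarrow$(3).
Take $P=R\oplus\mathfrak p$; it satisfies (2) with $n=1$. Since $w$-modules are closed under direct sums, coproducts of $w$-modules in $(R,w)\text{-}\mathrm{mod}$ are ordinary direct sums. So (3) would make $\mathfrak p$ a direct summand of a free module, i.e.\ invertible, which it is not.

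\textbf{The remaining implications.} Your (2)$\Rightarrow$(1) and (3)$\Rightarrow$(1) are fine, with one exception. The case $J=\varnothing$ cannot simply be excluded: if $L(P)=0$ (e.g.\ $P=R/J_0$), then (3) holds as stated, so one must assume $L(P)\neq0$.

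\textbf{Two smaller points in the trace argument.} First, $L(R/\mathfrak m)\neq R/\mathfrak m$ in general; for $\mathfrak m=(x)$ in $k[x,y]$ it is $k(y)$. Second, the lift of $\psi$ along $R\to L(R/\mathfrak m)$ exists only after multiplying by elements of a $\GV$-ideal. The argument survives anyway: $\Hom_R(P',L(R/\mathfrak m))$ is $\GV$-torsion-free, so some nonzero $d\psi$ lifts.

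\textbf{The semilocal case.} Your prime-avoidance weighting genuinely repairs the paper's $f=\sum f_i$; just evaluate at $x_i$, since $f(x_i)\equiv r_if_i(x_i)\not\equiv0 \bmod \mathfrak m_i$. The splitting is legitimate there, but only because a $w$-semilocal ring is DW. Prime avoidance puts a principal $\GV$-ideal inside any maximal ideal that is not a $w$-ideal, and a principal $\GV$-ideal is $R$. Hence $f$ is honestly onto $R$.
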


\begin{proof}

\smallskip
\noindent
(1)$\Rightarrow$(2).
Assume that $P$ is $w$-faithfully projective.
By Theorem~\ref{thm:w-Ishi-22},
\[
\hom_w(P,R/\mathfrak m)\neq 0
\qquad
\text{for every }
\mathfrak m\in w\text{-}\Max(R).
\]

We first show that for each
$\mathfrak m\in w\text{-}\Max(R)$
there exists a homomorphism
$f:L(P)\to R$
such that
$f(L(P))\not\subseteq\mathfrak m$.

Suppose, to the contrary, that for some
$\mathfrak m$ we have
$f(L(P))\subseteq\mathfrak m$
for all
$f\in\Hom_R(L(P),R)
=\hom_w(P,R)$.
Since $L(P)$ is projective in
$(R,w)\text{-}\mathrm{mod}$
(Proposition~\ref{prop:w-faith-projective-local}),
for every
\[
g\in
\hom_w(P,R/\mathfrak m)
=
\hom_w\big(L(P),(R/\mathfrak m)_w\big)
\]
there exists
$f_0:L(P)\to R$
such that the diagram
\[
\xymatrix@C=1.5cm{
& L(P)\ar[d]^{g}\ar@{-->}[ld]_{f_0} & \\
R\ar[r]_{L(\pi)} & (R/\mathfrak m)_w\ar[r] & 0
}
\]
commutes,
where $L(\pi)$ is induced by the canonical projection
$\pi:R\twoheadrightarrow R/\mathfrak m$.
Thus $g=L(\pi)\circ f_0$.
But by assumption
$f_0(L(P))\subseteq\mathfrak m$,
so $g=0$,
contradicting
$\hom_w(P,R/\mathfrak m)\neq 0$.

Hence, for each
$\mathfrak m$,
there exists
$f_{\mathfrak m}\in\hom_w(P,R)$
such that
$f_{\mathfrak m}(L(P))\not\subseteq\mathfrak m$.

Consider the $w$-trace ideal
\[
\tr_w(P)
:=
\sum_{f\in\hom_w(P,R)}
f\big(L(P)\big)
\subseteq R.
\]
The above argument shows that
$\tr_w(P)$
is not contained in any
$\mathfrak m\in w\text{-}\Max(R)$.
Therefore
\[
(\tr_w(P))_w=R.
\]

Since the torsion theory $\tau_w$ is of finite type,
there exist
$f_1,\dots,f_n\in\hom_w(P,R)$
such that
\[
\Big(
\sum_{k=1}^n f_k(L(P))
\Big)_w
=
R.
\]

Define
\[
e:L(P)^{\oplus n}\to R,
\qquad
e(x_1,\dots,x_n)
=
\sum_{k=1}^n f_k(x_k).
\]
Then
\[
\coker(e)
=
R\Big/
\sum_{k=1}^n f_k(L(P))
\]
is $\GV$-torsion,
so $e$ is an epimorphism in
$(R,w)\text{-}\mathrm{mod}$.
Since $R$ is projective in
$(R,w)\text{-}\mathrm{mod}$
(Corollary~\ref{cor:projective-obj}),
$e$ splits.
Hence there exists
$s:R\to L(P)^{\oplus n}$
such that
$e\circ s=1_R$.
Thus $R$ is a $w$-direct summand of
$L(P)^{\oplus n}$.

\medskip
\noindent
(2)$\Rightarrow$(3).
Assume
\[
L(P)^{\oplus n}\cong R\oplus Q.
\]
Since $(R,w)\text{-}\mathrm{mod}$
is a Grothendieck category,
it admits arbitrary direct sums.
Taking countable direct sums yields
\[
\big(L(P)^{\oplus n}\big)^{(\mathbb N)}
\cong
R^{(\mathbb N)}
\oplus
Q^{(\mathbb N)}.
\]
But
\[
\big(L(P)^{\oplus n}\big)^{(\mathbb N)}
\cong
L(P)^{(\mathbb N)}.
\]
By the Eilenberg swindle,
\[
L(P)^{(\mathbb N)}
\cong
R^{(\mathbb N)}.
\]
%Thus $\bigoplus_{i\in\mathbb N}L(P)$ is $w$-free.

\medskip
\noindent
(3)$\Rightarrow$(1).
If
\[
L(P)^{(I)}\cong R^{(J)}
\quad
\text{in }
(R,w)\text{-}\mathrm{mod},
\]
then for every
$X\in (R,w)\text{-}\mathrm{mod}$,
\[
\Hom_R(L(P),X)^{I}
\cong
\Hom_R(L(P)^{(I)},X)
\cong
\Hom_R(R^{(J)},X)
\cong
X^{J}.
\]
Hence
$\hom_w(P,X)=0$
implies
$X=0$.
By Theorem~\ref{thm:w-Ishi-22}(2),
$P$ is $w$-faithfully projective.

\medskip
\noindent
\textbf{Semilocal refinement.}

If $R$ is $w$-semilocal with
$w$-maximal ideals
$\mathfrak m_1,\dots,\mathfrak m_t$,
choose
$f_i\in\hom_w(P,R)$
with
$f_i(L(P))\not\subseteq\mathfrak m_i$
for each $i$.
Set
\[
f=\sum_{i=1}^t f_i:L(P)\to R.
\]
Then the image of $f$
has $w$-closure equal to $R$,
so $f$ is an epimorphism in
$(R,w)\text{-}\mathrm{mod}$.
Since $R$ is projective,
$f$ splits.
Thus $R$ is a $w$-direct summand of
$L(P)$,
and one may take $n=1$.
\end{proof}

\begin{proposition}
\label{prop:w-Ishikawa34-proj}
Let $R$ be a commutative ring.
\begin{enumerate}
\item[\textup{(1)}]
If every nonzero $w$-projective $R$-module of finite type
is $w$-faithfully projective, then $R$ is a DW-ring.

\item[\textup{(2)}]
Conversely, if $R$ is indecomposable, then every nonzero
$\GV$-torsion-free $w$-projective $R$-module of finite type
is $w$-faithfully projective.
\end{enumerate}
\end{proposition}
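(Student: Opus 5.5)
For \textup{(1)}, I would argue by contraposition using Example~\ref{exa:non-w-faith-pro}. If $R$ is not a DW-ring, choose a proper $\GV$-ideal $J\subsetneq R$ and set $Q=R/J$. The surjection $R\to R/J$ shows that $Q$ is of finite type. $Q$ is finitely presented and $\GV$-torsion, hence $w$-projective, and it is nonzero. Since $Q_{\mathfrak m}=0$ for all $\mathfrak m\in w\text{-}\Max(R)$, Theorem~\ref{thm:w-Ishi-22}\textup{(7)} shows that $Q$ is not $w$-faithfully projective. This contradicts the hypothesis.

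For \textup{(2)}, let $P$ be a nonzero $\GV$-torsion-free $w$-projective module of finite type. By Theorem~\ref{thm:w-Ishi-22} it suffices to prove $P_{\mathfrak m}\neq 0$ for every $\mathfrak m\in w\text{-}\Max(R)$. By Remark~\ref{rem:standing}, $P$ is of finitely presented type, so each $P_{\mathfrak m}\cong L(P)_{\mathfrak m}$ is a finitely presented projective, hence free, $R_{\mathfrak m}$-module. I would then study the $w$-trace ideal $I:=(\tr_w(P))_w$ of Remark~\ref{rem:LR=R}. Since $L(P)_{\mathfrak m}$ is free and $\Hom$ commutes with localization for modules of finitely presented type (\cite[Theorem~6.3.28]{WK24}), we get $I_{\mathfrak m}=R_{\mathfrak m}$ when $P_{\mathfrak m}\neq0$ and $I_{\mathfrak m}=0$ when $P_{\mathfrak m}=0$. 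In particular $I_{\mathfrak m}=(I^2)_{\mathfrak m}$ for all $\mathfrak m$.

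The goal is to show that $I=eR$ for an idempotent $e$. Since $P$ is of finite type, finitely many $f_1,\dots,f_n$ and finitely many generators suffice to produce an ideal $I_0\subseteq\tr_w(P)$ that is finitely generated with $(I_0)_w=I$. Applying the determinant trick locally, for each $\mathfrak m$ I get $s_{\mathfrak m}\notin\mathfrak m$ and $a_{\mathfrak m}\in I_0$ with $(s_{\mathfrak m}-a_{\mathfrak m})I_0=0$. This is the step I expect to be the main obstacle: the local elements must be glued, via the quasi-compactness argument of Proposition~\ref{prop:envelope-equiv}, into a single $a\in I$ with $(1-a)I$ $\GV$-torsion. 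Because $R$ is $\GV$-torsion-free, this forces $(1-a)I=0$. Then $a^2=a$ and $I=aR$.

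Indecomposability of $R$ now gives $a=0$ or $a=1$. If $a=0$, then $I=0$, so $\Hom_R(L(P),R)=0$, and by the local description $P_{\mathfrak m}=0$ for all $\mathfrak m$. Then $P$ is $\GV$-torsion by Lemma~\ref{lem:kernel=GV}. Since $P$ is also $\GV$-torsion-free, this gives $P=0$, a contradiction. Hence $I=R$, so $I_{\mathfrak m}\neq0$ and therefore $P_{\mathfrak m}\neq0$ for every $\mathfrak m$. This gives $w$-faithful projectivity by Theorem~\ref{thm:w-Ishi-22}.
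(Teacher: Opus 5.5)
\textbf{Overall.} Part (1) is the paper's argument. Your plan for (2) is also the paper's: turn the dichotomy ``trace locally $R_{\mathfrak m}$ where $P_{\mathfrak m}\neq 0$, locally $0$ where $P_{\mathfrak m}=0$'' into an idempotent. The paper does this as follows.
\begin{itemize}
\item It takes an ideal $I$ inside the trace with $I\nsubseteq\mathfrak m$ whenever $P_{\mathfrak m}\neq0$.
\item It takes an ideal $J$ generated by elements $s_{\mathfrak n}\notin\mathfrak n$ with $s_{\mathfrak n}P=0$ whenever $P_{\mathfrak n}=0$.
\item It observes $JI=0$ and $(I+J)_w=R$, and imports the splitting $R\cong J_w\oplus I_w$ from \cite[Lemma~5.10]{QZ22}.
\end{itemize}

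\textbf{The gap.} The step you flag as the main obstacle is exactly the content of that lemma, and your proposal does not prove it. Quasi-compactness only yields finitely many $s_1,\dots,s_n$ with $J=(s_1,\dots,s_n)\in\GV(R)$, together with $a_i\in I_0$ satisfying $(s_i-a_i)I_0=0$. A $\GV$-ideal need not contain $1$, so you cannot write $1=\sum r_is_i$ and put $a=\sum r_ia_i$. No single $a$ comes out of that argument alone.

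\textbf{How to close it.} The missing idea is the defining property $\Hom_R(J,R)\cong R$ of a $\GV$-ideal.
\begin{itemize}
\item Since $a_j\in I_0$, you have $s_ia_j=a_ia_j=s_ja_i$ for all $i,j$.
\item So if $\sum_i r_is_i=0$, then $s_j\sum_i r_ia_i=\sum_i r_is_ia_j=0$ for all $j$, whence $\sum_i r_ia_i\in\tor_{\GV}(R)=0$.
\item Thus $s_i\mapsto a_i$ is a well-defined map $J\to R$, i.e.\ multiplication by some $a\in R$.
\item Then $Ja\subseteq I_0$ gives $a\in(I_0)_w=I$.
\item Also $J(1-a)I_0=0$ gives $(1-a)I_0=0$, hence $(1-a)I=0$, after which your argument goes through.
\end{itemize}
Alternatively, check locally that $I\cap\Ann_R(I)=0$ and $(I+\Ann_R(I))_w=R$. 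Then $I+\Ann_R(I)\cong I\oplus\Ann_R(I)$ is a $w$-ideal, hence equals $R$.

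\textbf{Smaller points.}
\begin{itemize}
\item Choosing finitely many $f_k$ for $I_0$ needs more than finite type of $P$. It needs quasi-compactness of the closed set $w\text{-}\Max(R)\cap\Supp_R(P_0)$, where $P_0\subseteq P$ is finitely generated with $(P_0)_w=P_w$.
\item Projectivity of $P_{\mathfrak m}$ should be justified via $w$-flatness of $P$.
\item The determinant trick is unnecessary. When $P_{\mathfrak m}\neq0$, take $s_{\mathfrak m}=a_{\mathfrak m}\in I_0\setminus\mathfrak m$. When $P_{\mathfrak m}=0$, take $a_{\mathfrak m}=0$ and $s_{\mathfrak m}\notin\mathfrak m$ killing $P_0$, hence $L(P)$ and $I$. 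These are precisely the paper's elements.
\end{itemize}
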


\begin{proof}

\noindent
(1)
If $R$ is not a DW-ring, then by
Example~\ref{exa:non-w-faith-pro}
there exists a nonzero finitely presented
$w$-projective $R$-module that is not
$w$-faithfully projective.
This contradicts the hypothesis.
Hence $R$ must be a DW-ring.

\medskip
\noindent
(2)
We first prove that every nonzero finitely generated
$\GV$-torsion-free $w$-projective $R$-module
is $w$-faithfully projective.

Let $P$ be such a module.
For $\mathfrak m\in w\text{-}\Max(R)$,
the proof of Proposition~\ref{prop:w-faith-projective-local}
shows that
\[
\Hom_R\big(P_w,(R/\mathfrak m)_w\big)=0
\quad\Longleftrightarrow\quad
P_{\mathfrak m}=0.
\]

Introduce the following notation:
\begin{align*}
V_w(I)
&=\{\mathfrak m\in w\text{-}\Max(R)\mid I\subseteq\mathfrak m\},
\\[4pt]
S_w(P)
&=\{\mathfrak m\in w\text{-}\Max(R)\mid P_{\mathfrak m}=0\},
\\[4pt]
T_w(P)
&=\{\mathfrak m\in w\text{-}\Max(R)\mid P_{\mathfrak m}\neq 0\}
=w\text{-}\Max(R)\setminus S_w(P).
\end{align*}

Assume that $P$ is not $w$-faithfully projective.
By Theorem~\ref{thm:w-Ishi-22},
$S_w(P)\neq\varnothing$.
If $T_w(P)=\varnothing$, then
$P_{\mathfrak m}=0$ for all
$\mathfrak m\in w\text{-}\Max(R)$,
so $P$ is $\GV$-torsion.
Since $P$ is also $\GV$-torsion-free,
this forces $P=0$, a contradiction.
Hence $T_w(P)\neq\varnothing$.

For each $\mathfrak m\in T_w(P)$,
there exists
$f_{\mathfrak m}:P_w\to R$
such that
$f_{\mathfrak m}(P_w)\not\subseteq\mathfrak m$
(see the proof of Proposition~\ref{prop:w-Ishikawa24}).
Let $g_{\mathfrak m}=f_{\mathfrak m}|_P$.
Then $g_{\mathfrak m}(P)\not\subseteq\mathfrak m$.

Indeed, choose $x\in P_w$
with $f_{\mathfrak m}(x)\notin\mathfrak m$.
There exists a $\GV$-ideal $J$ with $Jx\subseteq P$.
Since $J\nsubseteq\mathfrak m$,
pick $a\in J\setminus\mathfrak m$.
Set $y=ax\in P$.
Then $g_{\mathfrak m}(y)=f_{\mathfrak m}(y)\notin\mathfrak m$.

Let $I$ be the ideal generated by all such
$g_{\mathfrak m}(y)$.
Then
\[
I\nsubseteq\mathfrak m
\quad
\text{for all }
\mathfrak m\in T_w(P),
\]
so
\[
V_w(I)\subseteq S_w(P).
\]

Conversely, if
$\mathfrak n\in S_w(P)$,
then $P_{\mathfrak n}=0$.
For any $g:P\to R$,
the composition
\[
P\xrightarrow{g}R\to R/\mathfrak n
\hookrightarrow (R/\mathfrak n)_w
\]
extends uniquely to a map
$P_w\to (R/\mathfrak n)_w$
(Proposition~\ref{prop:envelope-basic}(4)).
But
$\Hom_R\big(P_w,(R/\mathfrak n)_w\big)=0$,
so this composition is zero.
Thus $g(P)\subseteq\mathfrak n$,
and hence $I\subseteq\mathfrak n$.
Therefore
\[
V_w(I)=S_w(P).
\]

For each
$\mathfrak n\in S_w(P)$,
since $P_{\mathfrak n}=0$
and $P$ is finitely generated,
there exists
$s_{\mathfrak n}\in R\setminus\mathfrak n$
such that
$s_{\mathfrak n}P=0$.
Hence
$s_{\mathfrak n}I=0$.

Let $J$ be the ideal generated by all such
$s_{\mathfrak n}$.
Then
\[
JI=0,
\qquad
(J+I)_w=R.
\]
It follows that
\[
J_w I_w=0,
\qquad
(J_w+I_w)_w=R.
\]
By \cite[Lemma~5.10]{QZ22},
\[
R\cong J_w\oplus I_w.
\]
This contradicts the indecomposability of $R$.
Hence $P$ is $w$-faithfully projective.

\medskip

Now let $P'$ be a nonzero $\GV$-torsion-free
$w$-projective $R$-module of finite type.
By \cite[Proposition~6.3.13(3)]{WK24},
there exists a finitely generated submodule
$P\subseteq P'$
such that
$P_w=(P')_w$.
Then by \cite[Proposition~6.7.2(1)]{WK24},
$P$ is also $w$-projective.
By the first part of the proof,
$P$ is $w$-faithfully projective.
Finally, by Remark~\ref{rem:w-faith-pro-closed-w-iso},
$P'$ is $w$-faithfully projective as well.
\end{proof}

\section{$w$-faithfully flat modules}

It follows from \cite[Example~3.4]{zk26} that the tensor product of two
$R$-modules in $(R,w)\text{-}\mathrm{mod}$ need not belong to
$(R,w)\text{-}\mathrm{mod}$, even over P$v$MDs.
To remedy this, we introduce the notion of the $w$-tensor product.

\begin{definition}
Let $R$ be a commutative ring.
For $M,N\in\Mod(R)$ define
\[
M\otimes_R^{\,w}N:=L(M\otimes_R N).
\]
This is called the \emph{$w$-tensor product} of $M$ and $N$.
\end{definition}

This construction yields a bifunctor
\[
-\otimes_R^{\,w}-:\Mod(R)\times\Mod(R)
\longrightarrow
(R,w)\text{-}\mathrm{mod},
\]
which may be viewed as the $w$-analogue of Ishikawa's tensor product.

\medskip

\begin{remark}
\label{rem:standing-flat}
Let $R$ be a commutative ring and fix an $R$-module $M$.
Denote the functor $(-)\otimes_R^{\,w}M$ by $U_w(-)$.
We obtain a right exact functor
\[
U_w(-):
\Mod(R)\xrightarrow{-\otimes_R M}
\Mod(R)\xrightarrow{L(-)}
(R,w)\text{-}\mathrm{mod},
\]
which annihilates $\mathcal T(R)$.

The functor
\[
U_w(-)=(-)\otimes_R^{\,w}M:
\Mod(R)\longrightarrow (R,w)\text{-}\mathrm{mod}
\]
is \emph{exact} if and only if $M$ is $w$-flat.
This characterization will be used throughout this section
(cf.~\cite[Theorem~3.9]{QW15}).
\end{remark}

\begin{definition}
\label{def:w-faith-flat}
Let $R$ be a commutative ring.
An $R$-module $M$ is called \emph{$w$-faithfully flat}
if the functor
\[
U_w(-)=(-)\otimes_R^{\,w}M:
\Mod(R)\longrightarrow (R,w)\text{-}\mathrm{mod}
\]
is $w$-faithfully exact.
\end{definition}

It is clear that every $w$-faithfully flat module is $w$-flat.

\medskip

Recall from \cite{zkzh25} that an $R$-module $E$ is called
\emph{$w$-universal injective}
if $E$ is injective and,
for every nonzero $\GV$-torsion-free $R$-module $A$,
there exists an $R$-homomorphism
$f:A\to E$ with $f\neq 0$.
Equivalently,
\[
\Hom_R(A,E)\neq 0
\quad
\text{for every nonzero $\GV$-torsion-free module }A.
\]

\begin{proposition}
\label{prop:w-faith-local}
Let $R$ be a commutative ring.
An $R$-module $M$ is $w$-faithfully flat
if and only if $M_{\mathfrak m}$
is faithfully flat over $R_{\mathfrak m}$
for every
$\mathfrak m\in w\text{-}\Max(R)$.

Equivalently (by the $w$-version of the Ishikawa--Lambek theorem,
cf.~\cite[Proposition~3.6]{I64}),
$M$ is $w$-faithfully flat if and only if
$\Hom_R(M,E)$ is $w$-universal injective
for some (equivalently, for every) injective $R$-module $E$.
\end{proposition}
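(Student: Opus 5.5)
The plan is to split the statement into two halves: first show that $U_w$ is exact iff every $M_{\mathfrak m}$ is flat, then show that, given exactness, faithfulness of $U_w$ is the same as faithfulness of each localization. Throughout, the basic tool is that a module $X$ is $\GV$-torsion iff $X_{\mathfrak m}=0$ for all $\mathfrak m\in w\text{-}\Max(R)$, and that the canonical map $X\to L(X)$ is a $w$-isomorphism. Consequently
\[
L(X)_{\mathfrak m}\cong X_{\mathfrak m}
\quad\text{and}\quad
\big(N\otimes_R^{\,w}M\big)_{\mathfrak m}\cong N_{\mathfrak m}\otimes_{R_{\mathfrak m}}M_{\mathfrak m}.
\]

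\emph{Exactness.} By Remark~\ref{rem:standing-flat}, $U_w$ is exact iff $M$ is $w$-flat. A sequence of $w$-modules is exact in $(R,w)\text{-}\mathrm{mod}$ iff it is exact after localizing at every $\mathfrak m\in w\text{-}\Max(R)$. So $U_w$ is exact iff, for every monomorphism $A\to B$, the localized maps $A_{\mathfrak m}\otimes M_{\mathfrak m}\to B_{\mathfrak m}\otimes M_{\mathfrak m}$ are injective for all $\mathfrak m$. If each $M_{\mathfrak m}$ is flat, this is clear. Conversely, every $R_{\mathfrak m}$-module inclusion $A'\hookrightarrow B'$ is the localization of itself viewed as $R$-modules, since $A'_{\mathfrak m}=A'$. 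Hence exactness of $U_w$ forces $M_{\mathfrak m}$ to be flat. This is also the standard local characterization of $w$-flatness.

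\emph{Faithfulness.} Assume $M$ is $w$-flat and apply Theorem~\ref{thm:w-Ishi-11}(6): $M$ is $w$-faithfully flat iff $L(R/\mathfrak m\otimes_R M)\ne0$ for all $\mathfrak m\in w\text{-}\Max(R)$. I expect this step to be the main subtlety, because it needs a support argument. The module $R/\mathfrak m\otimes M=M/\mathfrak m M$ is annihilated by $\mathfrak m$, so its localization at any other $\mathfrak n\in w\text{-}\Max(R)$ vanishes, because some element of $\mathfrak m\setminus\mathfrak n$ becomes a unit there. Therefore $M/\mathfrak m M$ is $\GV$-torsion iff $(M/\mathfrak m M)_{\mathfrak m}=M_{\mathfrak m}/\mathfrak m M_{\mathfrak m}=0$. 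Moreover $L(X)=0$ iff $X$ is $\GV$-torsion. So condition (6) becomes $M_{\mathfrak m}\ne \mathfrak m M_{\mathfrak m}$ for all $\mathfrak m$. For the flat $R_{\mathfrak m}$-module $M_{\mathfrak m}$, this is exactly faithful flatness over the local ring $R_{\mathfrak m}$.

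\emph{Second equivalence.} This should follow the Lambek duality. Take $E$ injective; it is natural to assume $E$ is an injective cogenerator, and I read ``equivalently for every'' as ranging over cogenerators, since otherwise the statement fails for $E=0$. Then $\Hom_R(M,E)$ is injective iff $M$ is flat, and by adjunction $\Hom_R(A,\Hom_R(M,E))\cong\Hom_R(A\otimes M,E)$. Because $E$ is a cogenerator, this is nonzero iff $A\otimes M\ne0$. Reducing $w$-universal injectivity to the test on $A\otimes_R M$ requires two moves:
\begin{itemize}
\item passing between $A$ and $R/\mathfrak m$, using the fact that nonzero $\GV$-torsion-free modules surject $w$-locally onto some $R/\mathfrak m$, as in the proof of (6)$\Rightarrow$(2) in Theorem~\ref{thm:w-Ishi-11};
\item reconciling $A\otimes M\ne0$ with $A\otimes^w M\ne0$.
\end{itemize}
I expect this reconciliation, together with the gap between flatness of $M$ and flatness of $M_{\mathfrak m}$, to be the genuine obstacle in this half.
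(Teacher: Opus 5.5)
Your argument for the first equivalence is correct and follows the paper's Steps~1--2. You add some detail the paper omits: you prove the local flatness criterion directly by viewing $R_{\fkm}$-modules as $R$-modules, and you explain why $M/\fkm M$ is supported only at $\fkm$ among the maximal $w$-ideals.

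\textbf{The gap is in the second equivalence.} You leave it unproved, and the cogenerator reading you adopt cannot be completed. If $E$ is an injective cogenerator then, as you say, $\Hom_R(M,E)$ is injective only when $M$ is flat. But $w$-faithfully flat modules need not be flat. Take $R$ not a DW-ring, a proper $J\in\GV(R)$, and $M=R\oplus R/J$.
\begin{itemize}
\item Since $(R/J)_{\fkm}=0$, we get $M_{\fkm}\cong R_{\fkm}$ for all $\fkm\in w\text{-}\Max(R)$, so $M$ is $w$-faithfully flat by the part you proved.
\item $R/J$ is finitely presented and not projective. Otherwise $J=Re$ with $e^2=e$; then $1-e\in\ker\varphi_J$, and injectivity of $\varphi_J$ forces $J=R$.
\item Hence $M$ is not flat, and $\Hom_R(M,E)$ is not even injective.
\end{itemize}
So the obstacle you anticipated, flatness of $M$ versus flatness of the $M_{\fkm}$, is a genuine failure of the cogenerator reading, not a technical step.

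\textbf{The repair.} Let $E$ range over $\GV$-torsion-free $w$-universal injective modules, for example $Q'$ of Corollary~\ref{cor:w-Ishikawa32}. For such $E$ one has $\Hom_R(X,E)=\Hom_R(X/\tor_{\GV}(X),E)$, and this vanishes if and only if $X$ is $\GV$-torsion. Then:
\begin{itemize}
\item From $\Ext^1_R(A,\Hom_R(M,E))\cong\Hom_R(\mathrm{Tor}_1^R(A,M),E)$, $\Hom_R(M,E)$ is injective if and only if every $\mathrm{Tor}_1^R(A,M)$ is $\GV$-torsion. By localizing and using your first step, this holds if and only if $M$ is $w$-flat.
\item The adjunction $\Hom_R(A,\Hom_R(M,E))\cong\Hom_R(A\otimes_R M,E)$ is nonzero if and only if $A\otimes_R^{\,w}M\neq 0$. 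So $w$-universality of $\Hom_R(M,E)$ is exactly condition (4) of Theorem~\ref{thm:w-def21-plus-full}.
\end{itemize}
Both of your ``moves'' then disappear, and the equivalence holds for every such $E$.

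The paper's own Step~3 supplies none of this; it only invokes an unstated ``$w$-version of the Ishikawa--Lambek theorem''. The version it proves later, Proposition~\ref{prop:w-Ishikawa36}, uses $\hom_w(M,E)\cong\Hom_R(M,L(E))$ with $E$ $w$-faithfully injective, which is essentially the same repair.
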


\begin{proof}
Let
\[
U_w(-)=(-)\otimes_R^{w}M:
\Mod(R)\longrightarrow (R,w)\text{-}\mathrm{mod},
\qquad
X\otimes_R^{w}M=L(X\otimes_R M).
\]

\medskip
\noindent
\emph{Step 1: Exactness and local flatness.}

Exactness of $U_w(-)$ is precisely the $w$-flatness of $M$.
By the local criterion for $w$-flatness,
this is equivalent to
$M_{\mathfrak m}$ being flat over $R_{\mathfrak m}$
for every
$\mathfrak m\in w\text{-}\Max(R)$.

\medskip
\noindent
\emph{Step 2: Faithfulness and local faithful flatness.}

Assume that $U_w(-)$ is exact.
By Theorem~\ref{thm:w-Ishi-11},
$U_w(-)$ is $w$-faithfully exact if and only if
\[
U_w(R/\mathfrak m)\neq 0
\quad
\text{for all }
\mathfrak m\in w\text{-}\Max(R).
\]

Now
\[
U_w(R/\mathfrak m)
=
(R/\mathfrak m)\otimes_R^{w}M
=
L\big((R/\mathfrak m)\otimes_R M\big).
\]

Since $L(-)$ annihilates precisely $\GV$-torsion
and $R/\mathfrak m$ is supported only at $\mathfrak m$,
the above module is nonzero if and only if
\[
\big((R/\mathfrak m)\otimes_R M\big)_{\mathfrak m}\neq 0.
\]

Localizing gives
\[
(R_{\mathfrak m}/\mathfrak mR_{\mathfrak m})
\otimes_{R_{\mathfrak m}}
M_{\mathfrak m}
\neq 0.
\]

Over the local ring $R_{\mathfrak m}$,
this condition, together with flatness of
$M_{\mathfrak m}$,
is equivalent to faithful flatness of
$M_{\mathfrak m}$.

\medskip
\noindent
\emph{Step 3: Ishikawa--Lambek equivalence.}

The equivalence with the $w$-universal injectivity of
$\Hom_R(M,E)$ follows from the
$w$-version of the Ishikawa--Lambek theorem.
\end{proof}

\begin{corollary}
If $f:M\to M'$ is a $w$-isomorphism of $R$-modules,
then $M$ is $w$-faithfully flat if and only if $M'$ is $w$-faithfully flat.
\end{corollary}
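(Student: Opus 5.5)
The plan is to show that the functors $U_w(-)=(-)\otimes_R^{w}M$ and $U'_w(-)=(-)\otimes_R^{w}M'$ are naturally isomorphic. Once we have that, the corollary follows from Definition~\ref{def:w-faith-flat}. Indeed, $w$-faithful exactness (Definition~\ref{def:w-faithful-functor}) consists of three requirements: exactness, annihilation of $\mathcal T(R)$, and faithful exactness of the induced functor on $\mathsf Q_w(R)$. All three are invariant under natural isomorphism of functors, and the induced functors $\overline{U_w}$ and $\overline{U'_w}$ are themselves naturally isomorphic by the uniqueness part of the universal property of $T$.

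To build the natural isomorphism, fix $X\in\Mod(R)$ and consider $1_X\otimes f:X\otimes_R M\to X\otimes_R M'$. It suffices to show this map is a $w$-isomorphism, because then $L(1_X\otimes f)$ is an isomorphism, and naturality in $X$ is clear. We check the $w$-isomorphism property locally. A module $N$ is $\GV$-torsion iff $N_{\mathfrak m}=0$ for all $\mathfrak m\in w\text{-}\Max(R)$, and this is precisely how $\mathcal T(R)$ is defined. Localization is exact and commutes with tensor products, so we have
\[
\ker(1_X\otimes f)_{\mathfrak m}=\ker\big(1_{X_{\mathfrak m}}\otimes f_{\mathfrak m}\big),\qquad
\coker(1_X\otimes f)_{\mathfrak m}=\coker\big(1_{X_{\mathfrak m}}\otimes f_{\mathfrak m}\big),
\]
with tensors taken over $R_{\mathfrak m}$. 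Since $\ker f$ and $\coker f$ are $\GV$-torsion, $f_{\mathfrak m}$ is an isomorphism for every $\mathfrak m$. Hence $1_{X_{\mathfrak m}}\otimes f_{\mathfrak m}$ is an isomorphism, so its kernel and cokernel vanish, and $1_X\otimes f$ is a $w$-isomorphism.

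There is a shortcut available. By Proposition~\ref{prop:w-faith-local}, $w$-faithful flatness of $M$ is equivalent to faithful flatness of $M_{\mathfrak m}$ over $R_{\mathfrak m}$ for all $\mathfrak m\in w\text{-}\Max(R)$. Since $M_{\mathfrak m}\cong M'_{\mathfrak m}$, the corollary then follows at once. I would present this one-line argument as the main proof and mention the functorial argument as an alternative.

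The only delicate point is justifying that $f_{\mathfrak m}$ is an isomorphism, that is, that localization at $w$-maximal ideals kills $\GV$-torsion. This is immediate from the description of $\mathcal T(R)$ adopted in Section~3, equivalently from the fact that every $\GV$-ideal $J$ satisfies $J\nsubseteq\mathfrak m$, so no real obstacle is expected.
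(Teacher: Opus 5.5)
Your proposal is correct. The paper's proof is your ``alternative'' functorial argument: it asserts that $L(M)\cong L(M')$ makes $(-)\otimes_R^{w}M$ and $(-)\otimes_R^{w}M'$ naturally isomorphic, and concludes by invariance of $w$-faithful exactness. Your localization check that $1_X\otimes f$ is a $w$-isomorphism supplies the step the paper leaves implicit. That check uses $(X\otimes_R M)_{\mathfrak m}\cong X_{\mathfrak m}\otimes_{R_{\mathfrak m}}M_{\mathfrak m}$ and the fact that $f_{\mathfrak m}$ is an isomorphism. It is needed because $X\otimes_R^{w}M=L(X\otimes_R M)$ depends only on $L(M)$ once one knows that $X\otimes_R-$ carries $w$-isomorphisms to $w$-isomorphisms. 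The one-line route through Proposition~\ref{prop:w-faith-local}, which you prefer to present as the main proof, is genuinely different. It replaces the functor-level comparison with the local characterization, using $M_{\mathfrak m}\cong M'_{\mathfrak m}$. It is shorter, but it rests on heavier inputs: the cited local criterion for $w$-flatness and the reduction to the test modules $R/\mathfrak m$ from Theorem~\ref{thm:w-Ishi-11}. The functorial argument is self-contained and proves more: $U_w$ itself is unchanged up to isomorphism along $f$, so any property defined through $U_w$ (e.g.\ plain $w$-flatness) transfers at once. Neither route is circular, since Proposition~\ref{prop:w-faith-local} does not use this corollary. One small tightening: for invariance under natural isomorphism, cite Theorem~\ref{thm:w-Ishi-11}(2) rather than the uniqueness of $\overline{G}$. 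The paper states that uniqueness only for the strict equality $\overline G\circ T=G$. Theorem~\ref{thm:w-Ishi-11}(2) says that, for exact functors annihilating $\mathcal T(R)$, $w$-faithful exactness is just nonvanishing on nonzero $w$-modules, which is visibly preserved by natural isomorphism.
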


\begin{proof}
Since $L(M)\cong L(M')$,
the functors
\[
(-)\otimes_R^{\,w}M
\quad\text{and}\quad
(-)\otimes_R^{\,w}M'
\]
are naturally isomorphic.
Hence one is $w$-faithfully exact if and only if the other is.
\end{proof}

\begin{theorem}\textup{($w$-analogue of Ishikawa's \cite[Theorem~2.1]{I64},
with a maximal-$w$-ideal clause)}
\label{thm:w-def21-plus-full}
Let $M$ be a $w$-flat $R$-module.
Then the following conditions are equivalent.
\begin{enumerate}[label=\textup{(\alph*)}]
\item[\textup{(1)}]
$M$ is $w$-faithfully flat.

\item[\textup{(2)}]
$A\otimes_R^{\,w}M\neq 0$
for every nonzero $w$-module $A$.

\item[\textup{(3)}]
$\varphi\otimes_R^{\,w}1_M\neq 0$
for every nonzero morphism
$\varphi$ in $(R,w)\text{-}\mathrm{mod}$.

\item[\textup{(4)}]
$B\otimes_R^{\,w}M\neq 0$
for every nonzero $\GV$-torsion-free $R$-module $B$.

\item[\textup{(5)}]
For every proper $w$-ideal $I$ of $R$,
\[
L(IM)\neq L(M),
\qquad
\text{equivalently}
\qquad
(R/I)\otimes_R^{\,w}M
\cong
L(M/IM)
\neq 0.
\]

\item[\textup{(6)}]
$(R/\mathfrak m)\otimes_R^{\,w}M\neq 0$
for every $\mathfrak m\in w\text{-}\Max(R)$.

\item[\textup{($6'$)}]
For every $\mathfrak m\in w\text{-}\Max(R)$,
\[
L(M/\mathfrak m M)\neq 0,
\qquad
\text{equivalently}
\qquad
M_{\mathfrak m}/\mathfrak m M_{\mathfrak m}\neq 0.
\]

\item[\textup{(7)}]
$M_{\mathfrak m}$ is faithfully flat over $R_{\mathfrak m}$
for every $\mathfrak m\in w\text{-}\Max(R)$.
\end{enumerate}
\end{theorem}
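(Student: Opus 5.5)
Since $M$ is $w$-flat, Remark~\ref{rem:standing-flat} shows that $U_w(-)=(-)\otimes_R^{\,w}M$ is an exact covariant functor $\Mod(R)\to(R,w)\text{-}\mathrm{mod}$ that annihilates $\mathcal T(R)$. We therefore apply Theorem~\ref{thm:w-Ishi-11} to $G=U_w$, which at once gives the equivalence of (1), (2), (3), (4), (6), and of (5) in its second formulation $(R/I)\otimes_R^{\,w}M\neq 0$ for every proper $w$-ideal $I$. Proposition~\ref{prop:w-faith-local} gives (1)$\Leftrightarrow$(7) directly. What is left is to check the two reformulations inside (5) and ($6'$) and to attach ($6'$) to the chain.

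For (5), right exactness of $-\otimes_R M$ gives a canonical isomorphism $(R/I)\otimes_R M\cong M/IM$. Applying $L$ yields $(R/I)\otimes_R^{\,w}M\cong L(M/IM)$. Since $L$ is exact, the sequence $0\to IM\to M\to M/IM\to 0$ becomes
\[
0\to L(IM)\to L(M)\to L(M/IM)\to 0
\]
in $(R,w)\text{-}\mathrm{mod}$, where $L(IM)$ is taken as a subobject of $L(M)$. Hence $L(M/IM)\neq 0$ if and only if the monomorphism $L(IM)\to L(M)$ is not an isomorphism, that is, $L(IM)\neq L(M)$. Taking $I=\mathfrak m$ in the same identification gives (6)$\Leftrightarrow$(first form of $6'$).

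For the localized form of ($6'$), use that a module $X$ satisfies $L(X)=0$ if and only if $X$ is $\GV$-torsion. By the definition of $\mathcal T(R)$, this holds if and only if $X_{\mathfrak n}=0$ for all $\mathfrak n\in w\text{-}\Max(R)$. Take $X=M/\mathfrak mM$, which is annihilated by $\mathfrak m$. For any maximal $w$-ideal $\mathfrak n\neq\mathfrak m$, pick $s\in\mathfrak m\setminus\mathfrak n$; it acts invertibly on $X_{\mathfrak n}$ and also kills it, so $X_{\mathfrak n}=0$. Thus $L(M/\mathfrak mM)\neq 0$ if and only if $(M/\mathfrak mM)_{\mathfrak m}\cong M_{\mathfrak m}/\mathfrak mM_{\mathfrak m}\neq 0$.

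As a consistency check, (7)$\Rightarrow$($6'$) is also immediate without the functorial detour. Over the local ring $R_{\mathfrak m}$, a flat module $M_{\mathfrak m}$ is faithfully flat exactly when $M_{\mathfrak m}\neq\mathfrak mM_{\mathfrak m}$, and local flatness holds by $w$-flatness (Step 1 of Proposition~\ref{prop:w-faith-local}).

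The only delicate point is the bookkeeping in (5). We must make sure $L(IM)$ is regarded as a subobject of $L(M)$ via $L$ of the inclusion, so that "$\neq$" is meaningful. Exactness of $L$ handles this, and everything else is a direct appeal to Theorem~\ref{thm:w-Ishi-11} and Proposition~\ref{prop:w-faith-local}.
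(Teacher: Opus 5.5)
Your proposal is correct and follows the paper's own route. You apply Theorem~\ref{thm:w-Ishi-11} to the exact, $\mathcal T(R)$-annihilating functor $U_w=(-)\otimes_R^{\,w}M$ to get (1)--(6), and use Proposition~\ref{prop:w-faith-local} for the link with $(6')$ and (7). Your explicit checks of the reformulations inside (5) and $(6')$ fill in details the paper leaves implicit: you read $L(IM)\neq L(M)$ as a proper subobject via exactness of $L$, and you show that $M/\mathfrak m M$ localizes to zero at every other maximal $w$-ideal.
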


\begin{proof}
By Theorem~\ref{thm:w-Ishi-11},
conditions (1)--(6) are equivalent,
since $M$ is $w$-flat and hence
$U_w(-)=(-)\otimes_R^{\,w}M$
is exact.

The equivalence of (6), $(6')$, and (7)
follows from Proposition~\ref{prop:w-faith-local}.
\end{proof}

\begin{lemma}\label{lem:Xing-21-plus}
Let $M$ be a $\GV$-torsion-free $R$-module
and let $\mathfrak m$ be a maximal $w$-ideal of $R$.
Then the following conditions are equivalent.
\begin{enumerate}
\item[\textup{(1)}]
$(M_w/(\mathfrak m M)_w)_w=0$.

\item[\textup{(2)}]
$M_w/(\mathfrak m M)_w$ is $\GV$-torsion.

\item[\textup{(3)}]
$M_w=(\mathfrak m M)_w$.

\item[\textup{(4)}]
$M/\mathfrak m M$ is $\GV$-torsion.

\item[\textup{(5)}]
$L(M)=L(\mathfrak m M)$.
\end{enumerate}
\end{lemma}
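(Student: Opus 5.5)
The plan is to set up an exact sequence and then run a cycle of implications, working throughout with the inclusion $\mathfrak m M\subseteq M$ of $\GV$-torsion-free modules. Since $\mathfrak m M\subseteq M$ and $M$ is $\GV$-torsion-free, $\mathfrak mM$ is $\GV$-torsion-free as well. Hence $L(\mathfrak m M)=(\mathfrak mM)_w$ and $L(M)=M_w$.

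Functoriality (Proposition~\ref{prop:envelope-basic}(4)) applied to the inclusion gives an inclusion $(\mathfrak mM)_w\subseteq M_w$. This holds because $(\mathfrak mM)_w$ can be computed inside $\E(M)$: we have $\E(\mathfrak m M)\subseteq \E(M)$ up to isomorphism, and the element description $\{x: Jx\subseteq \mathfrak mM\}$ lands in $M_w$. We will therefore regard all these modules as submodules of $\E(M)$.

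The key step is the exact sequence
\[
0\to (\mathfrak mM)_w/\mathfrak mM\to M_w/\mathfrak mM\to M_w/(\mathfrak mM)_w\to 0,
\]
together with
\[
0\to M/\mathfrak mM\to M_w/\mathfrak mM\to M_w/M\to 0.
\]
The modules $(\mathfrak mM)_w/\mathfrak mM$ and $M_w/M$ are $\GV$-torsion by Proposition~\ref{prop:envelope-basic}(1). Since $\mathcal T(R)$ is closed under submodules, quotients and extensions (it is a hereditary torsion class), we get
\[
M_w/\mathfrak mM\in\mathcal T(R)\iff M/\mathfrak mM\in\mathcal T(R)\iff M_w/(\mathfrak mM)_w\in\mathcal T(R).
\]
This gives (2)$\Leftrightarrow$(4).

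For (1)$\Leftrightarrow$(2), note that $(M_w/(\mathfrak mM)_w)_w$ must be read as $L$ of that quotient. By Remark~\ref{rem:TM-vs-LM}, $L(X)=0$ if and only if $X\in\mathcal T(R)$: if $X$ is torsion then $F(X)=0$; conversely, if $L(X)=0$ then $F(X)\subseteq F(X)_w=0$, so $X=\tor_{\GV}(X)$. This settles (1)$\Leftrightarrow$(2).

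For (2)$\Leftrightarrow$(3), one direction is trivial, since (3) gives a zero quotient. For the other, the quotient $M_w/(\mathfrak mM)_w$ is $\GV$-torsion-free, because $(\mathfrak mM)_w$ is $w$-closed in $M_w$. Indeed, if $Jx\subseteq(\mathfrak mM)_w$ with $x\in M_w\subseteq\E(M)$, then by idempotence and Proposition~\ref{prop:envelope-equiv} we get $x\in((\mathfrak mM)_w)_w=(\mathfrak mM)_w$. This uses that a $\GV$-ideal times a $\GV$-ideal is again $\GV$, or equivalently the localization characterization $x/1\in(\mathfrak mM)_{\mathfrak n}$ for all $\mathfrak n$. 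A module that is both torsion and torsion-free is zero, so (2) gives (3).

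Finally, (3)$\Leftrightarrow$(5) is immediate from $L(M)=M_w$ and $L(\mathfrak mM)=(\mathfrak mM)_w$.

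The main obstacle is the careful identification of $(\mathfrak mM)_w$ as a submodule of $M_w$ inside a common injective hull, and the torsion-freeness of $M_w/(\mathfrak mM)_w$. I would handle both via Proposition~\ref{prop:envelope-equiv}(2), which describes membership in $(\mathfrak mM)_w$ by the local conditions $x/1\in(\mathfrak mM)_{\mathfrak n}$, a description independent of the choice of injective hull. Notably, maximality of $\mathfrak m$ plays no role; the argument works for any ideal.
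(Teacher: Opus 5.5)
Your proof is correct. Only (3)$\Leftrightarrow$(5) coincides with the paper's argument: both rest on $L(X)=X_w$ for $\GV$-torsion-free $X$.

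For (1)--(4) the paper gives no argument and simply cites \cite[Lemma~2.1]{X25}. You instead derive these equivalences internally from three ingredients:
\begin{itemize}
\item closure of $\mathcal T(R)$ under submodules, quotients and extensions, applied to your two short exact sequences, which gives (2)$\Leftrightarrow$(4);
\item the criterion $L(X)=0\iff X\in\mathcal T(R)$, which gives (1)$\Leftrightarrow$(2);
\item $w$-closedness of $(\mathfrak mM)_w$ in $M_w$, i.e.\ $\GV$-torsion-freeness of $M_w/(\mathfrak mM)_w$, which gives (2)$\Rightarrow$(3).
\end{itemize}

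What your route buys:
\begin{itemize}
\item It makes the lemma self-contained relative to Propositions~\ref{prop:envelope-equiv} and~\ref{prop:envelope-basic}.
\item As you observe, maximality of $\mathfrak m$ is never used: the equivalences hold with any submodule $N\subseteq M$ in place of $\mathfrak mM$.
\item It clarifies the notation in (1). Since $M_w/(\mathfrak mM)_w$ is $\GV$-torsion-free, the $(-)_w$ in (1) is the genuine $w$-envelope of Definition~\ref{def:two-closures}(1) and agrees with $L$. So (1)$\Leftrightarrow$(3) also follows directly from $X\subseteq X_w$.
\end{itemize}
The paper's version buys brevity at the cost of an external dependence.

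The one step to write out explicitly is the point you flag yourself. When you apply Proposition~\ref{prop:envelope-equiv} (or idempotence) to $\mathfrak mM$ for an element $x\in\E(M)$, that proposition is formulated for elements of $\E(\mathfrak mM)$. Justify $x\in\E(\mathfrak mM)$ as follows. Write $\E(M)=\E(\mathfrak mM)\oplus C$; here $C$ is $\GV$-torsion-free, because $\E(M)$ is an essential extension of the $\GV$-torsion-free module $M$. Then $Jx\subseteq\mathfrak mM$ with $J\in\GV(R)$ forces $J$ to annihilate the $C$-component of $x$, so that component is zero. The same argument shows that $\{x\in\E(M)\mid Jx\subseteq \mathfrak mM \text{ for some } J\in\GV(R)\}$ equals $(\mathfrak mM)_w$, which gives your inclusion $(\mathfrak mM)_w\subseteq M_w$ cleanly.
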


\begin{proof}
The equivalence of (1)--(4)
is proved in \cite[Lemma~2.1]{X25}.

\smallskip

\noindent
(3) $\Leftrightarrow$ (5).
This follows immediately from the identity
$L(X)=X_w$ for every $\GV$-torsion-free module $X$.
\end{proof}

\begin{remark}
In Theorem~\ref{thm:w-def21-plus-full},
the cyclic nonvanishing condition
\[
L(\mathfrak m M)\neq L(M)
\qquad
(\mathfrak m\in w\text{-}\Max(R))
\]
is, by Lemma~\ref{lem:Xing-21-plus},
equivalent to
\[
(M_w/(\mathfrak m M)_w)_w\neq 0
\quad
\text{for all }
\mathfrak m\in w\text{-}\Max(R).
\]

Moreover,
\cite[Lemma~2.1]{X25}
shows that the vanishing condition
$(M_w/(\mathfrak m M)_w)_w=0$
is equivalent to each of the conditions
\textup{(2)}--\textup{(4)}
in Lemma~\ref{lem:Xing-21-plus}.
Thus our nonvanishing criterion
$L(\mathfrak m M)\neq L(M)$
is precisely the negation of Xing's vanishing condition
\cite{X25}.

Consequently,
the present definition of a
\emph{$w$-faithfully flat module}
is fully compatible with Xing's formulation.
\end{remark}

\begin{proposition}\textup{($w$-analogue of Ishikawa's \cite[Proposition~2.3]{I64})}
\label{prop:w-Ishikawa23}
If $P$ is a $w$-faithfully projective $R$-module, then $P$ is
$w$-projective and $w$-faithfully flat.
Conversely, if $P$ is of finite type, then the reverse implication holds.
\end{proposition}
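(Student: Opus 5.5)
For the forward direction I would first show that $P$ is $w$-projective and then reduce faithful flatness to a local question. Since $P$ is $w$-faithfully projective, the functor $H_w(-)=\hom_w(P,-)$ is exact. Proposition~\ref{prop:exactness of H_w} then says that $P$ is $w$-projective.

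To get $w$-faithful flatness, the cleanest route seems to be Proposition~\ref{prop:w-Ishikawa24}(2): some $L(P)^{\oplus n}$ contains $R$ as a $w$-direct summand. That proposition is stated under the standing hypotheses of Remark~\ref{rem:standing}, i.e.\ finite type, which we do not have in the forward direction. So instead I would use the direct argument in its proof of (1)$\Rightarrow$(2). That argument only uses projectivity of $L(P)$ in $(R,w)\text{-}\mathrm{mod}$ (Proposition~\ref{prop:F(P)-proj-classical-1}) and the nonvanishing $\hom_w(P,R/\mathfrak m)\neq0$ from Theorem~\ref{thm:w-Ishi-11}(6). It yields, for each $\mathfrak m\in w\text{-}\Max(R)$, a map $f:L(P)\to R$ with $f(L(P))\not\subseteq\mathfrak m$.

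Now localize at $\mathfrak m$. The map $f_{\mathfrak m}:P_{\mathfrak m}\to R_{\mathfrak m}$ is surjective, since its image is not in $\mathfrak mR_{\mathfrak m}$, and therefore it splits. Hence $R_{\mathfrak m}$ is a direct summand of $P_{\mathfrak m}$, and in particular $P_{\mathfrak m}/\mathfrak mP_{\mathfrak m}\neq0$. Flatness of $P_{\mathfrak m}$ comes from $w$-projectivity: a $w$-projective module is $w$-flat by the standard result in \cite{WK24}, and $w$-flatness is equivalent to $P_{\mathfrak m}$ being flat over $R_{\mathfrak m}$ for all $\mathfrak m$, as used in Step~1 of Proposition~\ref{prop:w-faith-local}. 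With flatness and the nonvanishing above, Theorem~\ref{thm:w-def21-plus-full}, via (6$'$) or (7), gives that $P$ is $w$-faithfully flat.

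For the converse, assume $P$ is of finite type, $w$-projective and $w$-faithfully flat. Then Theorem~\ref{thm:w-Ishi-22} applies, so it suffices to check its condition (7): $P_{\mathfrak m}\neq0$ for all $\mathfrak m\in w\text{-}\Max(R)$. By Theorem~\ref{thm:w-def21-plus-full}(7), each $P_{\mathfrak m}$ is faithfully flat over $R_{\mathfrak m}$, hence nonzero. This finishes the converse.

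The main obstacle is the forward direction's use of the lifting argument without a finite-type hypothesis. I must make sure that exactness of $H_w$ gives projectivity of $L(P)$ in $(R,w)\text{-}\mathrm{mod}$; this is (3)$\Rightarrow$(1) of Proposition~\ref{prop:F(P)-proj-classical-1} applied to $L(P)$, noting $H_w=\Hom_R(L(P),L(-))$. I also need the citation that $w$-projective implies $w$-flat to hold in general. If that citation is unavailable, a fallback for flatness is to take the exactness of $\hom_w(P,-)$ together with the $w$-split property from Lemma~\ref{lem:w-pro-w-split}, and show that a $w$-split module has $P_{\mathfrak m}$ projective, hence flat, over $R_{\mathfrak m}$.
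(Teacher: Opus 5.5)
Your converse is the paper's argument: condition (7) of Theorem~\ref{thm:w-Ishi-22}, checked through Theorem~\ref{thm:w-def21-plus-full}(7). In the forward direction, your derivation of $w$-projectivity and then $w$-flatness also matches the paper. The faithful-flatness step takes a genuinely different route. The paper stays global and verifies condition (4) of Theorem~\ref{thm:w-def21-plus-full} by Ishikawa's adjunction trick. If $B$ is $\GV$-torsion-free with $B\otimes_R^{\,w}P=0$, then $B\otimes_R L(P)$ is $\GV$-torsion. So for every $w$-module $C$ one has $\hom_w(P,\Hom_R(B,C))\cong\Hom_R(B\otimes_R L(P),C)=0$. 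Faithfulness of $\hom_w(P,-)$ on $w$-modules gives $\Hom_R(B,C)=0$, and taking $C=B_w$ forces $B=0$. That needs only the tensor--Hom adjunction and faithfulness, with no lifting and no local criteria. Your route is local instead. You extract $\tr_w(P)\not\subseteq\fkm$, hence $R_{\fkm}$ as a direct summand of $P_{\fkm}$, and then use the local characterizations of $w$-flatness and $w$-faithful flatness. This gives more structural information, but it rests on a lifting step that needs care.

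Exactness of $H_w$ means exactness in $(R,w)\text{-}\mathrm{mod}$. Applied to $0\to\fkm\to R\to R/\fkm\to 0$, it only says that the cokernel of $\Hom_R(L(P),R)\to\Hom_R(L(P),L(R/\fkm))$ is $\GV$-torsion, not that every $g$ lifts. Indeed $L(P)$ need not be a projective object in the strict sense; even $R$ fails when $R$ is not DW. For a proper $J=(a_1,\dots,a_n)\in\GV(R)$, the map $R^n\to R$, $e_i\mapsto a_i$, is an epimorphism in $(R,w)\text{-}\mathrm{mod}$ through which $1_R$ does not factor. So (3)$\Rightarrow$(1) of Proposition~\ref{prop:F(P)-proj-classical-1} can only be used up to $\GV$-torsion.

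The repair is immediate. For $0\neq g$, pick $J\in\GV(R)$ with $Jg$ in the image and $a\in J\setminus\fkm$. Then $ag=L(\pi)\circ f_0$ for some $f_0$, and $ag\neq 0$ because $a\notin\fkm$ acts injectively on $L(R/\fkm)\subseteq\E(R/\fkm)$ ($\fkm$ prime). Hence $f_0(L(P))\not\subseteq\fkm$. Alternatively, skip the lift altogether. The image of a nonzero $g:L(P)\to L(R/\fkm)$ is nonzero and $\GV$-torsion-free, yet vanishes at every $w$-maximal ideal other than $\fkm$, so it survives at $\fkm$. Thus $g_{\fkm}:P_{\fkm}\to R_{\fkm}/\fkm R_{\fkm}$ is nonzero, and $\fkm P_{\fkm}\neq P_{\fkm}$ directly. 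With either fix your argument is complete.
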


\begin{proof}
Assume that $P$ is $w$-faithfully projective; that is,
the functor
$H_w(-)=\hom_w(P,-)$
is $w$-faithfully exact.

By Proposition~\ref{prop:exactness of H_w},
the exactness of $H_w(-)$ implies that $P$ is $w$-projective.
Hence $P$ is also $w$-flat
by \cite[Proposition~2.4]{WK14}.

Now let $B$ be a $\GV$-torsion-free $R$-module such that
$B\otimes_R^{\,w} P=0.$
Then
$L(B\otimes_R P)=0,$
so $B\otimes_R P$ is $\GV$-torsion.
Since $P$ and $L(P)$ are $w$-isomorphic,
$B\otimes_R L(P)$ is also $\GV$-torsion.

Let $C$ be a $w$-module.
Then $\Hom_R(B,C)$ is again a $w$-module, and we have
\[
\hom_w\!\big(P,\Hom_R(B,C)\big)
=
\Hom_R\big(L(P),\Hom_R(B,C)\big)
\cong
\Hom_R\big(B\otimes_R L(P),C\big)
=0.
\]
Since $\hom_w(P,-)$ is $w$-faithfully exact,
Theorem~\ref{thm:w-Ishi-11}(2) yields
\[
\Hom_R(B,C)=0
\quad
\text{for every nonzero $w$-module $C$.}
\]

Now let $D$ be any $\GV$-torsion-free module.
Since $D\hookrightarrow D_w$ is essential and $D_w$ is a $w$-module,
the exactness of
\[
0\longrightarrow\Hom_R(B,D)
\longrightarrow
\Hom_R(B,D_w)
\]
implies that $\Hom_R(B,D)=0$.
Thus $\Hom_R(B,D)=0$ for every $\GV$-torsion-free module $D$.

It follows that $B$ is $\GV$-torsion.
But $B$ is also $\GV$-torsion-free,
hence $B=0$.
Therefore, by Theorem~\ref{thm:w-def21-plus-full}(4),
$P$ is $w$-faithfully flat.

\medskip

Conversely, assume that $P$ is of finite type and both
$w$-projective and $w$-faithfully flat.
By \cite[Theorem~6.7.29(8)]{WK24},
$P$ is $w$-flat of finitely presented type.
Then Theorems~\ref{thm:w-def21-plus-full}(7)
and \ref{thm:w-Ishi-22}(7)
imply that $P$ is $w$-faithfully projective.
\end{proof}

\begin{proposition}
\label{prop:w-Ishikawa34-flat-correct}
Let $R$ be a commutative ring.
\begin{enumerate}[label=\textup{(\alph*)}]
\item[\textup{(1)}]
If every nonzero $w$-flat $R$-module is $w$-faithfully flat,
then $R$ is a DW-ring.

\item[\textup{(2)}]
Conversely, if $R$ is indecomposable,
then every nonzero $\GV$-torsion-free $w$-flat
$R$-module of finitely presented type
is $w$-faithfully flat.
\end{enumerate}
\end{proposition}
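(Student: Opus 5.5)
Both parts follow the pattern of Proposition~\ref{prop:w-Ishikawa34-proj}, now using the flat-side criteria of Theorem~\ref{thm:w-def21-plus-full} and the link between $w$-flat and $w$-projective modules supplied by Proposition~\ref{prop:w-Ishikawa23}.

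For \textup{(1)} I argue by contraposition. Suppose $R$ is not a DW-ring and choose a proper $\GV$-ideal $J\subsetneq R$. The module $Q=R/J$ is nonzero and $\GV$-torsion. Hence $Q\otimes_R X$ is $\GV$-torsion for every $X$, so $U_w(X)=X\otimes_R^{\,w}Q=0$, and $U_w$ is trivially exact. Thus $Q$ is $w$-flat; alternatively, $Q$ is finitely presented $w$-projective by Example~\ref{exa:non-w-faith-pro}, and $w$-projective modules are $w$-flat. Now take $\mathfrak m\in w\text{-}\Max(R)$. We have $Q_{\mathfrak m}=0$, so $Q_{\mathfrak m}/\mathfrak mQ_{\mathfrak m}=0$ and condition $(6')$ of Theorem~\ref{thm:w-def21-plus-full} fails. (If $w\text{-}\Max(R)=\varnothing$ then $R$ is $\GV$-torsion over itself, which forces $R=0$; that case is trivially a DW-ring.) So $Q$ is a nonzero $w$-flat module that is not $w$-faithfully flat, contradicting the hypothesis.

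For \textup{(2)}, let $M$ be a nonzero $\GV$-torsion-free $w$-flat module of finitely presented type. The plan is to reduce to Proposition~\ref{prop:w-Ishikawa34-proj}(2).
\begin{enumerate}
\item[\textup{(a)}] A $w$-flat module of finitely presented type is $w$-projective of finite type, by the same result \cite[Theorem~6.7.29]{WK24} used in the proof of Proposition~\ref{prop:w-Ishikawa23}. This is the local statement that a finitely presented flat module over $R_{\mathfrak m}$ is free, globalized in the $w$-setting.
\item[\textup{(b)}] Proposition~\ref{prop:w-Ishikawa34-proj}(2) now applies, because $R$ is indecomposable and $M$ is nonzero, $\GV$-torsion-free, $w$-projective and of finite type. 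Hence $M$ is $w$-faithfully projective.
\item[\textup{(c)}] By Proposition~\ref{prop:w-Ishikawa23}, $M$ is then $w$-faithfully flat.
\end{enumerate}

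The main obstacle is step \textup{(a)}, the passage from $w$-flat of finitely presented type to $w$-projective. I rely on the cited characterization in \cite{WK24}. Should that fail, a direct argument avoids projectivity altogether by rerunning the idempotent-splitting proof of Proposition~\ref{prop:w-Ishikawa34-proj}(2). Replace $M$ by a finitely generated submodule with the same $w$-envelope. For each $\mathfrak m$, $M_{\mathfrak m}$ is finitely presented and flat, hence free, so $M_{\mathfrak m}$ is either $0$ or faithfully flat. Let $S_w(M)$ be the set of $\mathfrak m$ with $M_{\mathfrak m}=0$; it equals $V_w(\Ann(M))$, because $M$ is finitely generated. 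Its complement is open and, by local freeness of finite-type modules, also closed in $w\text{-}\Max(R)$. If both sets were nonempty, \cite[Lemma~5.10]{QZ22} would turn this into a nontrivial $w$-idempotent decomposition of $R$, contradicting indecomposability. The set of $\mathfrak m$ with $M_{\mathfrak m}\neq 0$ is nonempty, since $M$ is $\GV$-torsion-free and nonzero. Therefore $M_{\mathfrak m}\neq 0$ for all $\mathfrak m\in w\text{-}\Max(R)$, and Theorem~\ref{thm:w-def21-plus-full}(7) finishes the proof.
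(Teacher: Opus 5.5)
Your main argument is correct and is essentially the paper's proof: for (1) the paper also takes $R/J$ with $J$ a proper $\GV$-ideal, notes that it is $\GV$-torsion and hence $w$-flat, and derives a contradiction from $(R/J)_{\mathfrak m}=0$ via the local criterion. For (2) it likewise goes from $w$-flat of finitely presented type to $w$-projective by \cite[Theorem~6.7.29]{WK24}, then applies Proposition~\ref{prop:w-Ishikawa34-proj}(2) and Proposition~\ref{prop:w-Ishikawa23}.

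Your fallback sketch is unnecessary and, as written, not sound. For finitely generated $M$ it is $\{\mathfrak m : M_{\mathfrak m}\neq 0\}$, not $S_w(M)$, that equals $V_w(\Ann(M))$. More seriously, a clopen partition of $w\text{-}\Max(R)$ alone does not produce ideals $I,J$ with $IJ=0$ and $(I+J)_w=R$: a semilocal PID with two maximal ideals is indecomposable although its maximal spectrum is discrete. So you would still need the trace-ideal construction from the proof of Proposition~\ref{prop:w-Ishikawa34-proj}(2).
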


\begin{proof}
(1)
Suppose that $R$ is not a DW-ring.
Then there exists $J\in\GV(R)$ with $J\neq R$.
Let $M:=R/J$.
Then $M$ is a nonzero $\GV$-torsion module,
hence $w$-flat.

By hypothesis, $M$ is $w$-faithfully flat.
Proposition~\ref{prop:w-faith-local} then implies that
$M_{\mathfrak m}$ is faithfully flat over
$R_{\mathfrak m}$
for all
$\mathfrak m\in w\text{-}\Max(R)$.
But $M$ is $\GV$-torsion,
so $M_{\mathfrak m}=0$ for all such $\mathfrak m$,
a contradiction.
Hence $R$ must be a DW-ring.

\medskip

(2)
Assume that $R$ is indecomposable
and let $M$ be a nonzero $\GV$-torsion-free
$w$-flat $R$-module of finitely presented type.
By \cite[Theorem~6.7.29]{WK24},
$M$ is $w$-projective.
Then Proposition~\ref{prop:w-Ishikawa34-proj}(2)
implies that $M$ is $w$-faithfully projective.
Finally, by Proposition~\ref{prop:w-Ishikawa23},
$M$ is $w$-faithfully flat.
\end{proof}

\section{$w$-faithfully injective modules}

For a fixed $R$-module $E$, define a contravariant functor
\[
V_w(X)
:=
\hom_w(X,E)
:=
\Hom_R\!\big(L(X),\,L(E)\big),
\qquad
X\in \Mod(R).
\]
Thus we obtain a left exact functor
\[
V_w(-):
\Mod(R)
\xrightarrow{L(-)}
(R,w)\text{-}\mathrm{mod}
\xrightarrow{\Hom_R(-,L(E))}
(R,w)\text{-}\mathrm{mod},
\]
which annihilates $\mathcal{T}(R)$.

Recall from \cite{WK14} that an $R$-module $Q$ is called
\emph{$w$-injective} if, for every $w$-exact sequence
\[
0\longrightarrow A\longrightarrow B\longrightarrow C\longrightarrow 0,
\]
the induced sequence
\[
0
\longrightarrow
\Hom_R(C,L(Q))
\longrightarrow
\Hom_R(B,L(Q))
\longrightarrow
\Hom_R(A,L(Q))
\longrightarrow
0
\]
is $w$-exact.
It is shown in \cite[Theorem~4.5]{QW15} that the functor
\[
V_w(-)=\hom_w(-,E):
\Mod(R)\longrightarrow (R,w)\text{-}\mathrm{mod}
\]
is exact if and only if $E$ is $w$-injective.

Recall also from \cite{Wu24} that an $R$-module $N$ is called
\emph{$iw$-split} if there exists a $w$-split short exact sequence
\[
0 \longrightarrow N \longrightarrow E \longrightarrow C \longrightarrow 0
\]
with $E$ injective.

\begin{proposition}
\label{prop:injective-obj}
Let $R$ be a commutative ring and let $E$ be a $w$-module over $R$, i.e.,
$E\in (R,w)\text{-}\mathrm{mod}$.
Then the following conditions are equivalent.
\begin{enumerate}
\item[\textup{(1)}]
$E$ is an injective object of $(R,w)\text{-}\mathrm{mod}$.

\item[\textup{(2)}]
For every morphism $f:A\to B$ in $\Mod(R)$ with
$\ker(f)\in\mathcal T(R)$,
the induced map
\[
\Hom_R\big(L(B),E\big)
\longrightarrow
\Hom_R\big(L(A),E)
\]
is surjective in $(R,w)\text{-}\mathrm{mod}$.

\item[\textup{(3)}]
The functor $V_w(-)$ is exact.

\item[\textup{(4)}]
$E$ is a $w$-injective $R$-module.

\item[\textup{(5)}]
$E$ is an $iw$-split $R$-module.

\item[\textup{(6)}]
$E$ is an injective $R$-module, i.e.,
an injective object of $\Mod(R)$.
\end{enumerate}
\end{proposition}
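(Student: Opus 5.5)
The plan is to establish the cycle $(1)\Leftrightarrow(2)\Leftrightarrow(3)$ by mirroring Proposition~\ref{prop:F(P)-proj-classical-1}, then attach (4) through \cite[Theorem~4.5]{QW15}, and finally connect (1), (5), (6) through injective hulls. Throughout, $E$ is a $w$-module, so $L(E)=E$ and $V_w(X)=\Hom_R(L(X),E)$.

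For (1)$\Rightarrow$(2), take $f:A\to B$ with $\ker(f)\in\mathcal T(R)$. Since $L$ is exact and kills $\mathcal T(R)$, the map $L(f):L(A)\to L(B)$ is a monomorphism in $(R,w)\text{-}\mathrm{mod}$. Injectivity of $E$ in that category then gives surjectivity of $\Hom_R(L(B),E)\to\Hom_R(L(A),E)$. For (2)$\Rightarrow$(3), apply $L$ to a short exact sequence $0\to A\to B\to C\to 0$. Left exactness of $\Hom_R(-,E)$ on the resulting exact sequence of $w$-modules, together with the surjectivity supplied by (2) for the monomorphism $A\to B$, shows that $V_w$ is exact. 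For (3)$\Rightarrow$(1), restrict $V_w$ to $(R,w)\text{-}\mathrm{mod}$, where $L$ is the identity: a monomorphism $A\to B$ of $w$-modules yields a surjection $\Hom_R(B,E)\to\Hom_R(A,E)$. The equivalence (3)$\Leftrightarrow$(4) is exactly \cite[Theorem~4.5]{QW15}.

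For (6)$\Rightarrow$(1), let $E$ be an injective $R$-module that is also a $w$-module. Given a monomorphism $u:A\to B$ in $(R,w)\text{-}\mathrm{mod}$, its kernel in $\Mod(R)$ is a $w$-submodule $K$ with $L(K)=0$, so $K$ is $\GV$-torsion. But $K\subseteq A$ is $\GV$-torsion-free, hence $K=0$ and $u$ is injective in $\Mod(R)$. Injectivity of $E$ in $\Mod(R)$ now extends every map $A\to E$ along $u$.

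For (1)$\Rightarrow$(6), which I expect to be the main step, consider the injective hull $\E(E)$ in $\Mod(R)$. Since $\tau_w$ is hereditary, $\tor_{\GV}(\E(E))\cap E=\tor_{\GV}(E)=0$, and essentiality forces $\tor_{\GV}(\E(E))=0$. Moreover $E_w\subseteq \E(E)$ and $E_w=E$, while $\E(E)$ equals its own injective hull and is therefore $w$-closed by Definition~\ref{def:two-closures}(1). Thus $\E(E)$ is a $w$-module, and $E\hookrightarrow \E(E)$ is a monomorphism in $(R,w)\text{-}\mathrm{mod}$. By (1) the identity of $E$ extends along it, so $E$ is a direct summand of $\E(E)$. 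Hence $E$ is injective in $\Mod(R)$, and in fact $E=\E(E)$.

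It remains to bring in (5). For (6)$\Rightarrow$(5), use the split sequence $0\to E\xrightarrow{1}E\to 0\to 0$ with $E$ injective, which is $w$-split with $J=R$. For (5)$\Rightarrow$(4), the cleanest route is to invoke Wu's result \cite{Wu24} that $iw$-split modules are $w$-injective. If a direct argument is preferred, start from a $w$-split sequence $0\to E\xrightarrow{i}I\to C\to 0$ with $I$ injective. Dualize the $w$-splitting via \cite{WQ20} to obtain $h_k:I\to E$ and $J=(d_1,\dots,d_n)\in\GV(R)$ with $h_k i=d_k 1_E$. Then for every monomorphism $A\to B$ the cokernel of $\Hom(B,E)\to\Hom(A,E)$ is annihilated by $J$, which is the required $w$-exactness. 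The delicate point in this argument is the passage from the cokernel-side to the kernel-side form of $w$-splitting, so citing \cite{Wu24} is the safer option.
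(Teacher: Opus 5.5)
Your cycle (1)$\Rightarrow$(2)$\Rightarrow$(3)$\Rightarrow$(1) and your appeal to \cite[Theorem~4.5]{QW15} for (3)$\Leftrightarrow$(4) are the paper's. You differ in how (5) and (6) are attached.

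The paper does this by citation:
\begin{itemize}
\item Wu for (4)$\Leftrightarrow$(5);
\item Stenstr\"om's result on reflective subcategories with exact reflector for (1)$\Rightarrow$(6);
\item \cite[Corollary~3.5]{WK14} for (6)$\Rightarrow$(4).
\end{itemize}
So the paper reaches (1) from (6) only by going around through (4) and (3).

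You prove (1)$\Leftrightarrow$(6) directly. For (6)$\Rightarrow$(1), the kernel of a monomorphism of $w$-modules is both $\GV$-torsion and $\GV$-torsion-free, so the monomorphism is an injective map; hence an injective $R$-module that is a $w$-module is injective in $(R,w)\text{-}\mathrm{mod}$. For (1)$\Rightarrow$(6), $\E(E)$ is $\GV$-torsion-free and injective, hence a $w$-module, and splitting $E\hookrightarrow\E(E)$ inside $(R,w)\text{-}\mathrm{mod}$ forces $E=\E(E)$. Both arguments are correct and elementary, and they make (1)$\Leftrightarrow$(6) independent of the other conditions. You then get (5) from the trivial (6)$\Rightarrow$(5) together with Wu's (5)$\Rightarrow$(4).

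The step you call delicate is routine. Suppose $0\to E\xrightarrow{i}I\xrightarrow{g}C\to 0$ is exact and $s_k\colon C\to I$ satisfy $gs_k=d_k1_C$. Then $d_k1_I-s_kg$ has image in $i(E)$, so it equals $ip_k$ for some $p_k\colon I\to E$. From $ip_ki=d_ki$ you get $p_ki=d_k1_E$.

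One step, shared with the paper, needs more than you give it. In (3)$\Rightarrow$(1), exactness of $V_w$ as a functor into $(R,w)\text{-}\mathrm{mod}$ means exactness after localizing at $w$-maximal ideals. It therefore only says that $\coker\bigl(\Hom_R(B,E)\to\Hom_R(A,E)\bigr)$ is $\GV$-torsion. It does not say the map is onto, which is what injectivity in $(R,w)\text{-}\mathrm{mod}$ means, and what your (1)$\Rightarrow$(6) uses when it extends $1_E$.

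Applied to $E\hookrightarrow\E(E)$, hypothesis (3) only yields maps $r_1,\dots,r_n\colon\E(E)\to E$ with $r_k|_E=d_k1_E$ for some $(d_1,\dots,d_n)\in\GV(R)$. That is a $w$-splitting, not a splitting. So getting from $\{(3),(4),(5)\}$ back to $\{(1),(6)\}$ needs an extra fact: a $w$-module that is $w$-split inside an injective module is itself injective (equivalently, via Wu, a $w$-injective $w$-module is injective). Your sentence ``yields a surjection'' asserts this without proof, as does the paper's restriction argument.

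You should either supply an argument or cite one. For Noetherian $R$, for example, it works as follows. At each $w$-maximal $\mathfrak m$, the localized $r_k/d_k$ with $d_k\notin\mathfrak m$ splits $E_{\mathfrak m}$ off the injective hull $\E(E)_{\mathfrak m}$, so $E_{\mathfrak m}=\E(E)_{\mathfrak m}$. Hence $\E(E)/E$ is $\GV$-torsion. It is also $\GV$-torsion-free, because $E$ is a $w$-module, so $\E(E)/E=0$. In general a separate argument or a reference is needed.
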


\begin{proof}

(1)$\Rightarrow$(2).
Let $f:A\to B$ be a morphism in $\Mod(R)$ with
$\ker(f)\in\mathcal T(R)$.
Then
$L\big(\ker(f)\big)=0,$
so the sequence
\[
0 \longrightarrow L(A) \longrightarrow L(B)
\]
is exact in $(R,w)\text{-}\mathrm{mod}$.
Since $E$ is injective in $(R,w)\text{-}\mathrm{mod}$,
the induced map
\[
\Hom_R\big(L(B),E\big)
\longrightarrow
\Hom_R\big(L(A),E)
\]
is surjective.

\medskip

(2)$\Rightarrow$(3).
This follows immediately from the identity
\[
V_w(-)=\Hom_R\big(L(-),E\big).
\]

\medskip

(3)$\Rightarrow$(1).
Let
\[
0\longrightarrow A\longrightarrow B
\]
be exact in $(R,w)\text{-}\mathrm{mod}$.
Since $L(A)=A$ and $L(B)=B$,
exactness of $V_w(-)$ yields
\[
\Hom_R(B,E)
\longrightarrow
\Hom_R(A,E)
\longrightarrow
0
\]
exact in $(R,w)\text{-}\mathrm{mod}$.
Thus $E$ is injective in $(R,w)\text{-}\mathrm{mod}$.

\medskip

(3)$\Leftrightarrow$(4).
This is precisely \cite[Theorem~4.5]{QW15}.

\medskip

(4)$\Leftrightarrow$(5).
This is proved in \cite[Corollary~2.5]{Wu24}.

\medskip

(1)$\Rightarrow$(6).
This follows from \cite[Chapter~X, Proposition~1.7]{S75},
since $(R,w)\text{-}\mathrm{mod}$ is a reflective subcategory of
$\Mod(R)$ with exact reflector.

\medskip

(6)$\Rightarrow$(4).
See \cite[Corollary~3.5]{WK14}.
\end{proof}

\begin{corollary}\label{exm:injective-obj}
Every $\GV$-torsion-free injective $R$-module is an injective object of
$(R,w)\text{-}\mathrm{mod}$.
\end{corollary}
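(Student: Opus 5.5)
The corollary should follow from Proposition~\ref{prop:injective-obj}, once we check that a $\GV$-torsion-free injective module $E$ is a $w$-module. The proposition characterizes injectivity in $(R,w)\text{-}\mathrm{mod}$ only for objects already lying in $(R,w)\text{-}\mathrm{mod}$. So the plan is to show that $E$ is a $w$-module, and then apply the implication (6)$\Rightarrow$(1) of that proposition.

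For the first step, let $E$ be $\GV$-torsion-free and injective. Since $E$ is injective, its injective hull is $E$ itself, i.e.\ $\E(E)=E$. By Definition~\ref{def:two-closures}(1), $E_w$ is the set of $x\in\E(E)$ with $Jx\subseteq E$ for some $J\in\GV(R)$. Since $\E(E)=E$, every such $x$ already lies in $E$, so $E\subseteq E_w\subseteq \E(E)=E$. Hence $E_w=E$, and $E$ is a $w$-module, i.e.\ an object of $(R,w)\text{-}\mathrm{mod}$.

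For the second step, $E$ is now a $w$-module that is injective in $\Mod(R)$, so it satisfies condition (6) of Proposition~\ref{prop:injective-obj}. The chain (6)$\Rightarrow$(4)$\Leftrightarrow$(3)$\Rightarrow$(1) established there shows that $E$ is an injective object of $(R,w)\text{-}\mathrm{mod}$.

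The argument can also be checked directly, avoiding the cited results. Let $0\to A\to B$ be a monomorphism in $(R,w)\text{-}\mathrm{mod}$. Its kernel in $\Mod(R)$ is a $w$-submodule of $A$ whose image under $L$ is zero; a $w$-module $K$ with $L(K)=K=0$ vanishes, so the map is injective in $\Mod(R)$. Injectivity of $E$ in $\Mod(R)$ then extends any map $A\to E$ to $B$. The only point needing care is the kernel computation, namely that kernels in the reflective subcategory agree with kernels in $\Mod(R)$. I expect no real obstacle; the substantive step is simply noticing that $\E(E)=E$ forces $E_w=E$.
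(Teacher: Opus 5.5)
Your proof is correct and matches the paper's: the paper also notes that a $\GV$-torsion-free injective $E$ satisfies $L(E)=E$, which you justify via $\E(E)=E$, and then applies Proposition~\ref{prop:injective-obj}. Your supplementary direct check is also valid and avoids the external citations: monomorphisms in $(R,w)\text{-}\mathrm{mod}$ are injective maps because kernels of maps between $w$-modules are $w$-modules, so injectivity of $E$ in $\Mod(R)$ supplies the extension.
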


\begin{proof}
If $E$ is injective and $\GV$-torsion-free, then $L(E)=E$.
By Proposition~\ref{prop:injective-obj}, injective $R$-modules that are
$w$-modules are injective objects in $(R,w)\text{-}\mathrm{mod}$.
\end{proof}

\begin{corollary}\label{cor:w-injective}
Let $E$ be an $R$-module. The following conditions are equivalent.
\begin{enumerate}
\item[\textup{(1)}]
$E$ is $w$-injective.

\item[\textup{(2)}]
$L(E)$ is $w$-injective.

\item[\textup{(3)}]
$L(E)$ is an $iw$-split $R$-module.

\item[\textup{(4)}]
$L(E)$ is an injective $R$-module.
\end{enumerate}
\end{corollary}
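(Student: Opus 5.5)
The plan is to prove the cycle (1)$\Rightarrow$(2)$\Rightarrow$(4)$\Rightarrow$(1) and then attach (3) to (4) by citation.

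\emph{(1)$\Rightarrow$(2).} Assume $E$ is $w$-injective. The sequence
\[
0\longrightarrow \tor_{\GV}(E)\longrightarrow E\longrightarrow F(E)\longrightarrow L(E)
\]
consists of $w$-isomorphisms, so the canonical morphism $c_E:E\to L(E)$ is a $w$-isomorphism. It follows that $L(L(E))=L(E)$ via $L(c_E)$, and hence
\[
\hom_w(X,E)=\Hom_R(L(X),L(E))=\Hom_R(L(X),L(L(E)))=\hom_w(X,L(E))
\]
for every $X$. Consequently the functor $V_w(-)$ attached to $E$ coincides with the one attached to $L(E)$. By \cite[Theorem~4.5]{QW15} (recalled before Proposition~\ref{prop:injective-obj}), exactness of this functor is equivalent to $w$-injectivity of the module. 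Therefore $E$ is $w$-injective if and only if $L(E)$ is, which gives (1)$\Leftrightarrow$(2). Alternatively, one can argue directly from the definition: $w$-injectivity of $Q$ only involves $L(Q)$, and $L(L(E))=L(E)$.

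\emph{(2)$\Leftrightarrow$(4).} Since $L(E)$ is a $w$-module, Proposition~\ref{prop:injective-obj}, applied to the $w$-module $L(E)$, gives the equivalence of its conditions (4) and (6). Condition (4) there says that $L(E)$ is $w$-injective, and condition (6) says that $L(E)$ is an injective $R$-module. This is exactly (2)$\Leftrightarrow$(4) here.

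\emph{(2)$\Leftrightarrow$(3).} Applying Proposition~\ref{prop:injective-obj} again to the $w$-module $L(E)$, its conditions (4) and (5) are equivalent; this equivalence comes from \cite[Corollary~2.5]{Wu24}. That yields (2)$\Leftrightarrow$(3).

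The only step that needs care is the identification $L(L(E))=L(E)$, which is used to transfer $w$-injectivity from $E$ to $L(E)$. I expect it to be the main point to check. The argument is as follows:
\begin{itemize}
\item $L(E)$ is a $w$-module, so $L(L(E))=(L(E))_w=L(E)$ by the idempotence in Proposition~\ref{prop:envelope-basic}(2).
\item $L$ is exact and kills $\ker(c_E)$ and $\coker(c_E)$, both of which are $\GV$-torsion; this is the content of Remark~\ref{rem:TM-vs-LM}.
\end{itemize}
Everything else follows formally from Proposition~\ref{prop:injective-obj}.
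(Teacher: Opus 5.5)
Your proof is correct and follows the paper's route. The equivalence (1)$\Leftrightarrow$(2) comes from the fact that $w$-injectivity only involves $L(-)$, and the remaining equivalences come from Proposition~\ref{prop:injective-obj} applied to the $w$-module $L(E)$. The one difference is that you verify (1)$\Leftrightarrow$(2) directly, via $L(L(E))=L(E)$ together with the definition or \cite[Theorem~4.5]{QW15}, where the paper simply cites \cite[Example~3.2]{WK14}. A small imprecision: in your displayed sequence, only the maps $E\to F(E)\to L(E)$ are $w$-isomorphisms, not the inclusion $\tor_{\GV}(E)\to E$; this does not affect the argument.
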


\begin{proof}
Since $E$ and $L(E)$ are $w$-isomorphic and $L(E)$ is a $w$-module,
$E$ is $w$-injective if and only if $L(E)$ is $w$-injective
(see \cite[Example~3.2]{WK14}).
The remaining equivalences follow from
Proposition~\ref{prop:injective-obj}.
\end{proof}

\begin{definition}
\label{def:w-faith-inj}
Let $R$ be a commutative ring and let $E$ be an $R$-module.
We say that $E$ is \emph{$w$-faithfully injective} if the functor
\[
V_w(-)=\hom_w(-,E):
\Mod(R)\longrightarrow (R,w)\text{-}\mathrm{mod}
\]
is $w$-faithfully exact.
Equivalently, the object $L(E)$ is faithfully injective in the abelian
category $(R,w)\text{-}\mathrm{mod}$.
\end{definition}

It is immediate that every $w$-faithfully injective module is
$w$-injective.

\begin{remark}\label{rem:w-faith-injective-closed-w-iso}
If $E$ and $E'$ are $w$-isomorphic $R$-modules, then
$L(E)\cong L(E')$.
Hence $E$ is $w$-faithfully injective if and only if so is $E'$.
\end{remark}

\begin{theorem}\textup{($w$-version of Ishikawa's \cite[Theorem~3.1]{I64})}
\label{thm:w-ishi-31}
Let $E$ be a $w$-injective $R$-module.
Then the following conditions are equivalent.
\begin{enumerate}
\item[\textup{(1)}]
$E$ is $w$-faithfully injective.

\item[\textup{(2)}]
$\hom_w(A,E)\neq 0$ for every nonzero $w$-module $A$ over $R$.

\item[\textup{(3)}]
For every nonzero morphism $\varphi:A\to B$ in
$(R,w)\text{-}\mathrm{mod}$,  the induced map
\[
\Hom_R(A,L(E))\xrightarrow{\ \varphi \circ  -  }
\Hom_R(B,L(E))
\]
is nonzero; equivalently, there exists
$\psi\in\Hom_R(B,L(E))$ such that
$\psi\circ\varphi\neq 0$.

\item[\textup{(4)}]
$\hom_w(B,E)\neq 0$ for every nonzero
$\GV$-torsion-free $R$-module $B$.

\item[\textup{(5)}]
$\hom_w(R/I,E)\neq 0$ for every proper $w$-ideal $I$ of $R$.

\item[\textup{(6)}]
$\hom_w(R/\fkm,E)\neq 0$ for every
$\fkm\in w\text{\rm-}\Max(R)$.
\end{enumerate}
\end{theorem}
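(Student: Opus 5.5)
The plan is to derive everything from the contravariant form of Theorem~\ref{thm:w-Ishi-11}, applied to the functor $V_w(-)=\hom_w(-,E)=\Hom_R(L(-),L(E))$. The first step is to check its hypotheses. Since $E$ is $w$-injective, \cite[Theorem~4.5]{QW15} (equivalently Proposition~\ref{prop:injective-obj}, (3)$\Leftrightarrow$(4), applied to the $w$-module $L(E)$, using Corollary~\ref{cor:w-injective}) gives that $V_w(-)$ is exact as a functor $\Mod(R)\to (R,w)\text{-}\mathrm{mod}$. It annihilates $\mathcal T(R)$ because $L(X)=0$ whenever $X$ is $\GV$-torsion. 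Thus $V_w$ is an exact contravariant additive functor killing $\mathcal T(R)$, which is exactly the setting of Theorem~\ref{thm:w-Ishi-11} in its contravariant form.

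Next I would match the six conditions one by one with those of Theorem~\ref{thm:w-Ishi-11}. Condition (1) is Definition~\ref{def:w-faith-inj} verbatim. Conditions (2), (4), (5), (6) are the nonvanishing conditions $V_w(A)\neq 0$, $V_w(B)\neq0$, $V_w(R/I)\neq0$, $V_w(R/\fkm)\neq0$.

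For (3), the morphism $V_w(\varphi)$ for $\varphi:A\to B$ in $(R,w)\text{-}\mathrm{mod}$ is precomposition $\Hom_R(B,L(E))\to\Hom_R(A,L(E))$, $\psi\mapsto\psi\circ\varphi$. Here $L(A)=A$ and $L(B)=B$. So $V_w(\varphi)\neq0$ says exactly that some $\psi$ has $\psi\circ\varphi\neq0$, which is the stated reformulation (the displayed arrow direction should be read with the contravariant reversal). Theorem~\ref{thm:w-Ishi-11} then gives (1)$\Leftrightarrow\cdots\Leftrightarrow$(6).

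The main point needing care is the contravariant version of the proof of Theorem~\ref{thm:w-Ishi-11}, since that theorem only asserts it "with the obvious reversal." I would verify the two nontrivial links directly.

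\textbf{(2)$\Rightarrow$(4).} For $B$ $\GV$-torsion-free, the sequence $0\to B\to B_w\to B_w/B\to0$ with $B_w/B\in\mathcal T(R)$ gives $V_w(B_w)\cong V_w(B)$ by exactness. Indeed $L(B)=B_w$ directly.

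\textbf{(6)$\Rightarrow$(2).} Let $A$ be a nonzero $w$-module and pick $0\ne a\in A$. Then $\ann(a)$ is a proper $w$-ideal, so it lies in some $\fkm\in w\text{-}\Max(R)$. This gives $Ra\hookrightarrow A$ and $Ra\twoheadrightarrow R/\fkm$. Contravariant exactness turns these into an injection $V_w(R/\fkm)\hookrightarrow V_w(Ra)$ and a surjection $V_w(A)\twoheadrightarrow V_w(Ra)$. Hence $V_w(R/\fkm)\neq0$ forces $V_w(A)\neq0$.

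The equivalence of (1)–(3) with faithfulness then follows from \cite[Sublemma~1.15]{S68}, applied via Proposition~\ref{prop:w-faithful-equivalences}, just as in the covariant case.
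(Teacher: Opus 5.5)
Your proposal is correct and follows the paper's proof: the paper notes that $w$-injectivity makes $V_w(-)=\hom_w(-,E)$ exact and $\mathcal T(R)$-annihilating, then invokes the contravariant form of Theorem~\ref{thm:w-Ishi-11}. Your explicit checks of the reversed implications (2)$\Rightarrow$(4) and (6)$\Rightarrow$(2) are sound additions the paper leaves implicit, as is your observation that the displayed map in (3) should read $\Hom_R(B,L(E))\to\Hom_R(A,L(E))$, $\psi\mapsto\psi\circ\varphi$.
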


\begin{proof}
Since $E$ is $w$-injective, the functor
\[
G=V_w(-)=\hom_w(-,E)
\]
is exact and annihilates $\mathcal T(R)$.
Applying Theorem~\ref{thm:w-Ishi-11} to $G$
yields the desired equivalences.
\end{proof}

\begin{corollary}\label{cor:w-faith-injective}
Let $E$ be an $R$-module.
Then $E$ is $w$-faithfully injective if and only if
$L(E)$ is $w$-universal injective.

Moreover, if $E$ is a $w$-module, then
$E$ is $w$-faithfully injective if and only if it is
$w$-universal injective.
\end{corollary}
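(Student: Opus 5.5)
We prove the corollary by comparing, for a nonzero $\GV$-torsion-free module $B$, the two non-vanishing conditions $\hom_w(B,E)\neq 0$ and $\Hom_R(B,L(E))\neq 0$, using Theorem~\ref{thm:w-ishi-31}(4) and the definition of $w$-universal injectivity.

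\emph{First, injectivity on both sides.} If $E$ is $w$-faithfully injective, then $E$ is $w$-injective. By Corollary~\ref{cor:w-injective}, $L(E)$ is then an injective $R$-module. Conversely, if $L(E)$ is $w$-universal injective, it is injective by definition, so $E$ is $w$-injective by Corollary~\ref{cor:w-injective}. Hence in both directions Theorem~\ref{thm:w-ishi-31} applies, and $E$ is $w$-faithfully injective if and only if $\hom_w(B,E)\neq 0$ for every nonzero $\GV$-torsion-free $B$. Here
\[
\hom_w(B,E)=\Hom_R(L(B),L(E))=\Hom_R(B_w,L(E)).
\]

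\emph{Second, the key identification.} For $\GV$-torsion-free $B$, restriction along $B\hookrightarrow B_w$ should give a bijection
\[
\Hom_R(B_w,L(E))\xrightarrow{\ \sim\ }\Hom_R(B,L(E)).
\]
\begin{itemize}
\item Surjectivity: any $f:B\to L(E)$ is a map between $\GV$-torsion-free modules. It extends to $f_w:B_w\to L(E)_w=L(E)$ by Proposition~\ref{prop:envelope-basic}(4), using idempotence of the envelope.
\item Injectivity: it follows from the uniqueness in that same proposition, or directly because $B$ is essential in $B_w$.
\end{itemize}
So $\hom_w(B,E)\neq 0$ if and only if $\Hom_R(B,L(E))\neq 0$. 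Combining this with the first step gives exactly the definition of $L(E)$ being $w$-universal injective, which proves the first statement.

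\emph{The ``moreover'' part.} If $E$ is a $w$-module, then $L(E)=E$ (it is $\GV$-torsion-free, so $L(E)=E_w=E$), and the second statement follows at once.

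\emph{Main obstacle.} The delicate step is the injectivity bookkeeping: $w$-universal injectivity requires $L(E)$ to be injective in $\Mod(R)$, not merely $w$-injective. This is exactly what the equivalence (1)$\Leftrightarrow$(4) of Corollary~\ref{cor:w-injective} supplies.
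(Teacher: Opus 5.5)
Your proof is correct and follows essentially the same route as the paper. You use Corollary~\ref{cor:w-injective} to get injectivity of $L(E)$, then Theorem~\ref{thm:w-ishi-31}(4), then the identification $\Hom_R(B_w,L(E))\cong\Hom_R(B,L(E))$ for $\GV$-torsion-free $B$. Your injectivity bookkeeping is more careful than the paper's one-line version, which misstates it as ``$w$-faithfully injective iff $L(E)$ is $w$-injective.''

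One justification needs correcting. Injectivity of the restriction map does not follow from $B$ being essential in $B_w$, since a map can vanish on an essential submodule without being zero. The right reason is torsion-freeness of the target: if $h:B_w\to L(E)$ satisfies $h|_B=0$ and $x\in B_w$, pick $J\in\GV(R)$ with $Jx\subseteq B$. Then $Jh(x)=h(Jx)=0$, so $h(x)\in\tor_{\GV}(L(E))=0$.
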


\begin{proof}
By Corollary~\ref{cor:w-injective},
$E$ is $w$-faithfully injective
if and only if $L(E)$ is $w$-injective.
By Theorem~\ref{thm:w-ishi-31}(4),
this is equivalent to
$\hom_w(B,E)\neq 0$
for every nonzero $\GV$-torsion-free module $B$.
Since
\[
\Hom_R(A,L(E))\cong\Hom_R(A_w,L(E))
\]
for every $\GV$-torsion-free module $A$,
this is precisely the definition of
$w$-universal injectivity of $L(E)$.
\end{proof}

Let $A,B$ be $R$-modules and let
$c_A:A\to L(A)$ and $c_B:B\to L(B)$ be the canonical homomorphisms.
Define
\[
c_A\oplus c_B:A\oplus B\longrightarrow L(A)\oplus L(B)
\]
by
\[
(c_A\oplus c_B)(a,b)=\big(c_A(a),c_B(b)\big)
\quad\text{for all }(a,b)\in A\oplus B.
\]
Since $L(A)\oplus L(B)$ is a $w$-module, the morphism
$c_A\oplus c_B$ induces a homomorphism
\[
L(c_A\oplus c_B):
L(A\oplus B)\longrightarrow L(A)\oplus L(B).
\]

\begin{lemma}\label{lem:L-direct-sum}
Under the above notation,
\[
L(c_A\oplus c_B):
L(A\oplus B)\longrightarrow L(A)\oplus L(B)
\]
is an $R$-isomorphism.
\end{lemma}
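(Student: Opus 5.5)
The plan is to show that $c_A\oplus c_B:A\oplus B\to L(A)\oplus L(B)$ is a $w$-isomorphism with a $w$-module as target, and then invoke the universal property of the reflector $L$. Recall that a morphism $f$ is a $w$-isomorphism exactly when $L(f)$ is an isomorphism.

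\textbf{Step 1: the target is a $w$-module.} First check that $L(A)\oplus L(B)$ is a $w$-module.

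It is $\GV$-torsion-free, since $\tor_{\GV}$ commutes with finite direct sums.

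For $w$-closedness, take $x=(x_1,x_2)$ in $\E(L(A)\oplus L(B))=\E(L(A))\oplus \E(L(B))$ with $Jx\subseteq L(A)\oplus L(B)$ for some $J\in\GV(R)$. Then $Jx_1\subseteq L(A)$ and $Jx_2\subseteq L(B)$, so $x_i$ lies in the respective $w$-envelopes, which are $L(A)$ and $L(B)$ by Proposition~\ref{prop:envelope-basic}(2). Alternatively, one can use that $(R,w)\text{-}\mathrm{mod}$, being reflective, is closed under finite products (limits) in $\Mod(R)$.

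\textbf{Step 2: kernel and cokernel are $\GV$-torsion.} By construction,
\[
\ker(c_A\oplus c_B)=\ker(c_A)\oplus\ker(c_B)=\tor_{\GV}(A)\oplus\tor_{\GV}(B),
\qquad
\coker(c_A\oplus c_B)=\coker(c_A)\oplus\coker(c_B).
\]
Each summand lies in $\mathcal T(R)$ by Remark~\ref{rem:TM-vs-LM}. Since $\mathcal T(R)$ is closed under finite direct sums (for instance, localize at each $\mathfrak m\in w\text{-}\Max(R)$), both the kernel and the cokernel are $\GV$-torsion. Hence $c_A\oplus c_B$ is a $w$-isomorphism.

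\textbf{Step 3: conclude that $L(c_A\oplus c_B)$ is an isomorphism.} Apply the exact functor $L$, which kills $\mathcal T(R)$, to
\[
0\to \ker(c_A\oplus c_B)\to A\oplus B\to L(A)\oplus L(B)\to \coker(c_A\oplus c_B)\to 0.
\]
This shows that $L(c_A\oplus c_B)$ is an isomorphism from $L(A\oplus B)$ onto $L(L(A)\oplus L(B))$. By Step~1 the latter equals $L(A)\oplus L(B)$, since $L$ restricts to the identity on $w$-modules.

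The map constructed this way coincides with the induced homomorphism in the statement. Indeed, both extend $c_A\oplus c_B$ through $c_{A\oplus B}$, and such an extension is unique by Proposition~\ref{prop:envelope-basic}(4), i.e. by the adjunction.

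The only mildly delicate point is Step~1: identifying $L$ with the identity on $L(A)\oplus L(B)$, which needs that a finite direct sum of $w$-modules is again a $w$-module. The rest is formal.
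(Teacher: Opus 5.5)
Your proof is correct, but it takes a different route from the paper's.

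\textbf{The paper's route.} It applies the exact functor $L$ to the split sequence $0\to A\to A\oplus B\to B\to 0$. It stacks the result over the split sequence $0\to L(A)\to L(A)\oplus L(B)\to L(B)\to 0$, with identities on the outer terms, and concludes by the five lemma in $(R,w)\text{-}\mathrm{mod}$.

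\textbf{Your route.} You verify directly that $c_A\oplus c_B$ is a $w$-isomorphism. Its kernel is $\tor_{\GV}(A)\oplus\tor_{\GV}(B)$ and its cokernel is $\coker(c_A)\oplus\coker(c_B)$, both $\GV$-torsion. Exactness of $L$ then turns $c_A\oplus c_B$ into an isomorphism.

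\textbf{Comparison.}
\begin{enumerate}
\item Your version is more self-contained. You actually prove that $L(A)\oplus L(B)$ is a $w$-module, which the paper only asserts in the paragraph before the lemma.
\item You also handle the fact that $L(c_A\oplus c_B)$ literally lands in $L(L(A)\oplus L(B))$. You identify it with the induced map through uniqueness of factorization through $c_{A\oplus B}$.
\item The paper avoids computing kernels and cokernels in $\Mod(R)$. However, it tacitly needs both squares of its diagram to commute, and that rests on the same uniqueness you invoke: two maps from $L(X)$ into a $w$-module that agree after precomposition with $c_X$ are equal.
\end{enumerate}
So your argument makes explicit the ingredients the paper's argument uses silently.
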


\begin{proof}
	Consider the following commutative diagram with exact rows in $(R,w)\text{-}\mathrm{mod}$:
	\[
	\xymatrix{
            0\ar[r] & L(A)\ar[r]\ar@{=}[d] 
            & L(A\oplus B)\ar[r]\ar[d]^{L(c_A\oplus c_B)} 
            & L(B)\ar[r]\ar@{=}[d] 
            & 0 \\
            0\ar[r] & L(A)\ar[r] 
            & L(A)\oplus L(B)\ar[r] 
            & L(B)\ar[r] 
            & 0.
        }
	\]
	
	Since the outer vertical maps are identities and both rows are exact, it follows that 
	$L(c_A\oplus c_B)$ is an isomorphism in $(R,w)\text{-}\mathrm{mod}$. 
	Consequently, it is also an isomorphism in $\Mod(R)$.
\end{proof}

\begin{lemma}\label{lem:w-injective-closed-direct-sum}
Let $A,B$ be $R$-modules.
Then $A\oplus B$ is $w$-injective if and only if both
$A$ and $B$ are $w$-injective.
\end{lemma}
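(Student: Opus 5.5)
The plan is to reduce everything to $L$-images using Corollary~\ref{cor:w-injective}. A module $X$ is $w$-injective if and only if $L(X)$ is an injective $R$-module. So it suffices to prove
\[
L(A\oplus B)\ \text{injective}\iff L(A)\ \text{and}\ L(B)\ \text{injective}.
\]

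By Lemma~\ref{lem:L-direct-sum}, the map $L(c_A\oplus c_B)$ gives an $R$-isomorphism
\[
L(A\oplus B)\cong L(A)\oplus L(B).
\]
Thus the question becomes whether a finite direct sum of $R$-modules is injective exactly when each summand is. This is classical: injectivity is preserved by finite direct sums, since $\Hom_R(-,X\oplus Y)\cong\Hom_R(-,X)\oplus\Hom_R(-,Y)$ is exact iff both factors are exact. It also passes to direct summands, since a summand is a retract of an injective module.

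Assembling the equivalences in order:
\[
A\oplus B\ w\text{-injective}
\iff L(A\oplus B)\ \text{injective}
\iff L(A)\oplus L(B)\ \text{injective}
\iff L(A),\,L(B)\ \text{injective}
\iff A,\,B\ w\text{-injective}.
\]
The first and last steps are Corollary~\ref{cor:w-injective}, (1)$\Leftrightarrow$(4). The second is Lemma~\ref{lem:L-direct-sum}, and the third is the classical fact above.

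The only point needing care is that Lemma~\ref{lem:L-direct-sum} gives an isomorphism of $R$-modules, not merely in $(R,w)\text{-}\mathrm{mod}$. The lemma states this explicitly, so there is no real obstacle.

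As a backup route, one could argue directly in the Grothendieck category $(R,w)\text{-}\mathrm{mod}$. There, injective objects are closed under finite sums and summands. Proposition~\ref{prop:injective-obj}, (1)$\Leftrightarrow$(4), identifies these injective objects with the $w$-injective $w$-modules, which gives the same conclusion.
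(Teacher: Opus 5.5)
Your proposal is correct and follows the paper's proof essentially verbatim: reduce via Corollary~\ref{cor:w-injective} to injectivity of $L(-)$, identify $L(A\oplus B)\cong L(A)\oplus L(B)$ as $R$-modules by Lemma~\ref{lem:L-direct-sum}, and invoke the classical fact that a finite direct sum is injective if and only if each summand is. You also state explicitly that injectivity passes to direct summands, which the ``only if'' direction needs and which the paper leaves implicit.
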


\begin{proof}
By Lemma~\ref{lem:L-direct-sum},
\[
L(A\oplus B)\cong L(A)\oplus L(B).
\]
By Corollary~\ref{cor:w-injective}, an $R$-module $X$ is $w$-injective
if and only if $L(X)$ is injective in $\Mod(R)$.
Since injectivity is preserved under finite direct sums,
the result follows.
\end{proof}

\begin{corollary}\textup{($w$-analogue of Ishikawa's \cite[Corollary~3.2]{I64})}
\label{cor:w-Ishikawa32}
\begin{enumerate}
\item[\textup{(1)}]
If $E$ is $w$-faithfully injective and $E'$ is $w$-injective,
then $E\oplus E'$ is $w$-faithfully injective.

\item[\textup{(2)}]
Let
\[
Q=\E_R\!\left(\bigoplus_{\mathfrak n\in w\text{\rm-}\Max(R)}R/\mathfrak n\right),
\qquad
Q'=\prod_{\mathfrak n\in w\text{\rm-}\Max(R)} \E_R(R/\mathfrak n),
\]
where $\E_R(-)$ denotes the injective envelope in $\Mod(R)$.
Then both $Q$ and $Q'$ are $w$-faithfully injective.
\end{enumerate}
\end{corollary}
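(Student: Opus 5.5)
For part (1), I will use Theorem~\ref{thm:w-ishi-31} for $E\oplus E'$. First, Lemma~\ref{lem:w-injective-closed-direct-sum} shows that $E\oplus E'$ is $w$-injective, because $E$ (being $w$-faithfully injective) and $E'$ are both $w$-injective. This puts us inside the hypotheses of Theorem~\ref{thm:w-ishi-31}, so it suffices to verify condition (2): $\hom_w(A,E\oplus E')\neq 0$ for every nonzero $w$-module $A$. Lemma~\ref{lem:L-direct-sum} gives $L(E\oplus E')\cong L(E)\oplus L(E')$, hence
\[
\hom_w(A,E\oplus E')\cong \Hom_R(A,L(E))\oplus\Hom_R(A,L(E')).
\]
The first summand is nonzero by Theorem~\ref{thm:w-ishi-31}(2) applied to $E$, and this finishes (1).

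For part (2), both $Q$ and $Q'$ are injective $R$-modules: $Q$ as an injective envelope, and $Q'$ as a product of injectives. By Proposition~\ref{prop:injective-obj}(6)$\Rightarrow$(4), both are $w$-injective, so again Theorem~\ref{thm:w-ishi-31} applies, and I will check condition (6). Fix $\mathfrak m\in w\text{-}\Max(R)$. The natural map $R/\mathfrak m\hookrightarrow\bigoplus_{\mathfrak n}R/\mathfrak n\hookrightarrow Q$ is a nonzero homomorphism. Similarly, $R/\mathfrak m\hookrightarrow \E_R(R/\mathfrak m)\hookrightarrow Q'$ is nonzero, the second map being the inclusion of a factor, which is a split injection. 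The task is then to turn such a nonzero map $R/\mathfrak m\to X$ (with $X=Q$ or $Q'$) into a nonzero element of $\hom_w(R/\mathfrak m,X)=\Hom_R(L(R/\mathfrak m),L(X))$.

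For that step I note that $\mathfrak m$ is a $w$-ideal, so $R/\mathfrak m$ is $\GV$-torsion-free; also $\mathfrak m$ is prime, so $R/\mathfrak m$ is a nonzero domain. Consider the composite $R/\mathfrak m\to X\to F(X)=X/\tor_{\GV}(X)\to L(X)$. Its image is a cyclic submodule isomorphic to $R/\Ann$, where $\Ann\supseteq\mathfrak m$. If the image were $\GV$-torsion (in particular if it died in $F(X)$), there would be $J\in\GV(R)$ with $J\subseteq \Ann$, so $\Ann$ would be an ideal containing $\mathfrak m$ and not contained in $\mathfrak m$, forcing $\Ann=R$. But the image of $1$ in $X$ itself is $\bar 1\neq 0$ with annihilator exactly $\mathfrak m$, so $\bar 1\notin\tor_{\GV}(X)$. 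Since $F(X)\to L(X)$ is injective, the composite $R/\mathfrak m\to L(X)$ is therefore nonzero. It then extends to $L(R/\mathfrak m)=(R/\mathfrak m)_w\to L(X)$ by Proposition~\ref{prop:envelope-basic}(4), or by the reflector adjunction, and the extension remains nonzero. This gives $\hom_w(R/\mathfrak m,X)\neq0$, so Theorem~\ref{thm:w-ishi-31}(6) holds.

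The main obstacle is exactly this step showing the element $\bar1$ is not $\GV$-torsion in $X$. The argument rests on the facts that a $\GV$-ideal is never contained in a maximal $w$-ideal and that the annihilator of $\bar 1$ is exactly $\mathfrak m$; everything else is formal bookkeeping with Lemma~\ref{lem:L-direct-sum} and Theorem~\ref{thm:w-ishi-31}.
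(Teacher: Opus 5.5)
Part (1) is exactly the paper's argument. Lemma~\ref{lem:w-injective-closed-direct-sum} gives $w$-injectivity, and Lemma~\ref{lem:L-direct-sum} gives $\hom_w(A,E\oplus E')\cong\hom_w(A,E)\oplus\hom_w(A,E')$. Theorem~\ref{thm:w-ishi-31}(2) then finishes.

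Part (2) is correct in substance, but the route differs from the paper's, and there is one hypothesis you have not checked.

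\emph{How the paper does it.} The paper first observes that $Q$ and $Q'$ are $\GV$-torsion-free injective modules. Hence they are $w$-modules, with $L(Q)=Q$ and $L(Q')=Q'$. The nonvanishing is then the one-line computation $\hom_w(R/\mathfrak m,Q)\cong\Hom_R(R/\mathfrak m,Q)\neq 0$, and componentwise for $Q'$.

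\emph{How you do it.} You never assert global torsion-freeness. Instead you argue at the level of one element. The image of $\bar 1$ in $X$ has annihilator exactly $\mathfrak m$, and no $\GV$-ideal lies in $\mathfrak m$. So this image is not $\GV$-torsion, it survives in $F(X)\subseteq L(X)$, and the map extends to $(R/\mathfrak m)_w$. This is valid. It is also slightly more general: it gives $\hom_w(R/\mathfrak m,X)\neq 0$ for any $X$ containing a copy of $R/\mathfrak m$.

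\emph{The hypothesis you need.} You invoke Proposition~\ref{prop:injective-obj}\,(6)$\Rightarrow$(4) to get $w$-injectivity of $Q$ and $Q'$. That proposition assumes $E$ is a $w$-module, so you need $Q$ and $Q'$ to be $\GV$-torsion-free after all. Without this, you would need the stronger statement that every injective module is $w$-injective, which the proposition does not state. The check is quick:
\begin{itemize}
\item $R/\mathfrak n$ is $\GV$-torsion-free, because $\mathfrak n$ is prime and contains no $\GV$-ideal. This is the same observation you use for $\bar 1$.
\item Since $\tau_w$ is hereditary, direct sums, products and injective hulls of $\GV$-torsion-free modules remain $\GV$-torsion-free.
\item A $\GV$-torsion-free injective $X$ satisfies $X_w\subseteq \E(X)=X$, so it is a $w$-module.
\end{itemize}
Once this is in place, $L(X)=X$, and your last paragraph reduces to the paper's computation. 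So the element-level detour is not actually needed.

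\emph{A false inference.} The sentence ``an ideal containing $\mathfrak m$ and not contained in $\mathfrak m$, forcing $\Ann=R$'' does not follow. A maximal $w$-ideal need not be a maximal ideal; for example, a height-one prime of $k[x,y]$ is a maximal $w$-ideal but not a maximal ideal. It does no harm, because your next sentence gives the correct reason: the annihilator of $\bar 1$ in $X$ is exactly $\mathfrak m$.
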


\begin{proof}
\textup{(1)}
By Lemma~\ref{lem:w-injective-closed-direct-sum},
$E\oplus E'$ is $w$-injective.

Let $A$ be a $w$-module such that
$\hom_w(A,E\oplus E')=0.$

Using Lemma~\ref{lem:L-direct-sum},
\begin{align*}
\hom_w(A,E\oplus E')
&=\Hom_R\big(A,L(E\oplus E')\big) \\
&\cong
\Hom_R\big(A,L(E)\oplus L(E')\big) \\
&\cong
\Hom_R\big(A,L(E)\big)
\oplus
\Hom_R\big(A,L(E')\big) \\
&=
\hom_w(A,E)\oplus \hom_w(A,E').
\end{align*}
Hence $\hom_w(A,E)=0$.
Since $E$ is $w$-faithfully injective,
Theorem~\ref{thm:w-ishi-31}(2) implies $A=0$.
Therefore $E\oplus E'$ is $w$-faithfully injective.

\smallskip
\textup{(2)}
Both $Q$ and $Q'$ are injective $R$-modules and
$\GV$-torsion-free.
Hence they are $w$-injective by Corollary~\ref{cor:w-injective}.

For each $\fkm\in w\text{\rm-}\Max(R)$,
\[
\hom_w(R/\fkm,Q)
=
\Hom_R\big((R/\fkm)_w,Q\big)
\cong
\Hom_R(R/\fkm,Q)
\neq 0,
\]
since $R/\fkm$ embeds into its injective envelope.

Similarly,
\[
\hom_w(R/\fkm,Q')
\cong
\prod_{\mathfrak n\in w\text{\rm-}\Max(R)}
\Hom_R(R/\fkm,\E_R(R/\mathfrak n)).
\]
The component corresponding to $\mathfrak n=\fkm$
is nonzero, so the product is nonzero.

Thus, by Theorem~\ref{thm:w-ishi-31},
both $Q$ and $Q'$ are $w$-faithfully injective.
\end{proof}

Recall from \cite[Definition 6.4.1]{WK24} that a nonzero $w$-module $M$
is said to be $w$-simple if $M$ has no nontrivial $w$-submodules.
Recall from \cite{G86} that an $R$-module $M$ is said to be
$\tau_w$-simple if it is not $\tau_w$-torsion and
$\tor_{\GV}(M)$ is the unique proper $\tau_w$-pure submodule of $M$.

\begin{lemma}\label{lem:w-semisimple}
Let $M$ be a $w$-module over $R$ and let $\{T_i\}_{i\in I}$
be a family of $w$-simple $R$-submodules of $M$ such that
\[
M=\sum_{i\in I}T_i.
\]
Then:
\begin{enumerate}
\item[\textup{(1)}]
For each $w$-submodule $K$ of $M$ there exists a subset $J\subseteq I$
such that $\{T_j\}_{j\in J}$ is independent
(i.e.\ $\sum_{j\in J}T_j=\bigoplus_{j\in J}T_j$) and
\[
M
=
K
\oplus
\left(\bigoplus_{j\in J}T_j\right).
\]

\item[\textup{(2)}]
For every $w$-simple $R$-submodule $T$ of $M$,
there exists $i\in I$ such that $T\cong T_i$.
\end{enumerate}
\end{lemma}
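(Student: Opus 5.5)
The plan is to transcribe the classical argument for semisimple modules (the standard Zorn's lemma argument, as in Anderson--Fuller) into the Grothendieck category $(R,w)\text{-}\mathrm{mod}$. In this category sums and direct sums are computed by taking $w$-closures inside $M$. So throughout, \lq\lq$\sum$\rq\rq{} and \lq\lq$\oplus$\rq\rq{} should be read in $(R,w)\text{-}\mathrm{mod}$: the sum of a family of $w$-submodules of $M$ is the $w$-closure in $M$ of their ordinary sum. Independence will mean that each $T_j$ meets the closure of the sum of the others trivially. Since $M$ is a $w$-module, the closure of any submodule of $M$ stays inside $M$, and the intersection of two $w$-submodules is again a $w$-submodule. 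These facts let lattice arguments go through.

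For (1), fix a $w$-submodule $K$ and consider the set $\mathcal S$ of subsets $J\subseteq I$ with two properties: $\{T_j\}_{j\in J}$ is independent, and $K\cap\big(\sum_{j\in J}T_j\big)_w=0$. The first step is to show $\mathcal S$ is inductive. The key point is that $\tau_w$ is of finite type, so an element of $\big(\sum_{j\in J}T_j\big)_w$ is carried into the finite sum $\sum_{j\in J_0}T_j$ by some $\GV$-ideal, with $J_0\subseteq J$ finite. A nonzero element of the intersection with $K$ for a union of a chain would then yield, after multiplying by a suitable element of that $\GV$-ideal, a nonzero element for a single member of the chain. This uses that $K$ and $M$ are $\GV$-torsion-free, so a $\GV$-ideal cannot kill a nonzero element. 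Zorn's lemma then gives a maximal $J$. Set $N=K\oplus\big(\bigoplus_{j\in J}T_j\big)$, taken as a $w$-submodule. For each $i\in I$, the intersection $T_i\cap N$ is a $w$-submodule of the $w$-simple module $T_i$, hence is $0$ or $T_i$. If it were $0$, the set $J\cup\{i\}$ would lie in $\mathcal S$, contradicting maximality. So every $T_i\subseteq N$, and therefore $N\supseteq(\sum_i T_i)_w=M$.

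For (2), let $T$ be a $w$-simple submodule of $M$. Apply (1) with $K=T$ to get $M=T\oplus\bigoplus_{j\in J}T_j$. Apply (1) again with $K=\bigoplus_{j\in J}T_j$ to get a subset $J'$ with $M=\bigoplus_{j\in J}T_j\oplus\bigoplus_{j\in J'}T_j$. Comparing the two decompositions gives
\[
T\cong M\Big/\bigoplus_{j\in J}T_j\cong\bigoplus_{j\in J'}T_j
\]
in $(R,w)\text{-}\mathrm{mod}$. Since $T$ is $w$-simple, hence indecomposable and nonzero, $J'$ consists of a single index $i$. So $T\cong T_i$. Alternatively, take the projection of $M$ onto $T$ along the first decomposition. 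It is nonzero on some $T_i$, because the $T_i$ generate $M$ up to $w$-closure and the projection kills $\GV$-torsion issues. By $w$-simplicity of both modules the restriction $T_i\to T$ is an isomorphism, which is a Schur-type argument.

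The main obstacle is the inductivity step in (1). Unlike the classical case, the union of a chain of independent families need not have its $w$-closed sum equal to the union of the closed sums. The finite-type property of $\tau_w$, in the form of Proposition~\ref{prop:envelope-equiv}(3) (finitely many $s_i$ generating a $\GV$-ideal), has to be used carefully. I would first reduce to checking independence and the trivial-intersection condition on ordinary (non-closed) sums. The justification is that a $w$-submodule meets $X_w$ trivially if and only if it meets $X$ trivially, since $X_w/X$ is $\GV$-torsion and everything is $\GV$-torsion-free. After that reduction, the classical finite-support argument applies verbatim.
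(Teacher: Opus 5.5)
Your proposal is correct and follows the paper's argument: Zorn's lemma on independent subfamilies meeting $K$ trivially, then $w$-simplicity forcing every $T_i$ into $N$, and in (2) a comparison of two complements of one $w$-submodule (the paper takes $K=0$ and then $K=T$ on the subfamily; you take $K=T$ and then $K=\bigoplus_J T_j$, to the same effect). The only real difference is bookkeeping: taking $w$-closed sums makes the step ``$T_i\cap N$ is a $w$-submodule'' automatic, whereas the paper's ordinary sum $N=K+\sum_{j\in J}T_j$ is $w$-closed only tacitly, because an internal direct sum of $w$-modules is again a $w$-module. That same fact is what identifies your $(K\oplus\bigoplus_J T_j)_w$ with the ordinary direct sum in the statement.
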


\begin{proof}
(1)
Let $K$ be a $w$-submodule of $M$.
Consider
\[
\Gamma
=
\left\{
J\subseteq I
\ \middle|\
\{T_j\}_{j\in J}\text{ is independent and }
K\cap\Big(\sum_{j\in J}T_j\Big)=0
\right\}.
\]
Ordered by inclusion, every chain has an upper bound,
so by Zorn's lemma $\Gamma$ has a maximal element $J$.

Set
\[
N
=
K
+
\sum_{j\in J}T_j.
\]
By construction,
\[
N
=
K
\oplus
\left(\bigoplus_{j\in J}T_j\right).
\]

We claim $N=M$.
Let $i\in I$.
Since $T_i$ is $w$-simple and $T_i\cap N$
is a $w$-submodule of $T_i$, either
\[
T_i\cap N=0
\quad\text{or}\quad
T_i\subseteq N.
\]
If $T_i\cap N=0$, then
$J\cup\{i\}\in\Gamma$,
contradicting maximality.
Hence $T_i\subseteq N$ for all $i$,
so $M=N$.

\smallskip
(2)
Applying (1) with $K=0$,
there exists $J\subseteq I$ such that
\[
M=\bigoplus_{j\in J}T_j.
\]

Now apply (1) with $K=T$.
Then
\[
M
=
T
\oplus
\left(\bigoplus_{l\in L}T_l\right)
\]
for some $L\subseteq J$.

But also
\[
M
=
\left(\bigoplus_{j\in J\setminus L}T_j\right)
\oplus
\left(\bigoplus_{l\in L}T_l\right).
\]
Hence
\[
T
\cong
\bigoplus_{j\in J\setminus L}T_j.
\]
Since $T$ is $w$-simple,
$|J\setminus L|=1$.
Thus $T\cong T_i$ for some $i\in J\subseteq I$.
\end{proof}

\begin{lemma}\label{lem:sum-of-w-simple}
Let $M$ be a $w$-module and
$M_1,\dots,M_n$ be pairwise non-isomorphic
$w$-simple $R$-submodules of $M$.
Then
\[
\sum_{i=1}^n M_i
=
\bigoplus_{i=1}^n M_i.
\]
\end{lemma}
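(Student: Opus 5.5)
We argue by induction on $n$. The case $n=1$ is trivial. For the inductive step, suppose that $N:=\sum_{i=1}^{n-1}M_i=\bigoplus_{i=1}^{n-1}M_i$. It then suffices to show that $M_n\cap N=0$.

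The intersection $M_n\cap N$ is a submodule of the $w$-module $M$. We first check that it is a $w$-submodule, i.e.\ that it is $w$-closed. Each $M_i$ is a $w$-module, and a finite direct sum of $w$-modules is a $w$-module, so $N$ is a $w$-module. The intersection of two $w$-submodules of a $\GV$-torsion-free module is again a $w$-submodule: if $Jx\subseteq M_n\cap N$ with $J\in\GV(R)$, then $x\in (M_n)_w\cap N_w=M_n\cap N$. Since $M_n$ is $w$-simple, either $M_n\cap N=0$ or $M_n\subseteq N$.

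Assume, for a contradiction, that $M_n\subseteq N$. Then $M_n$ is a $w$-simple submodule of $N=\bigoplus_{i=1}^{n-1}M_i$. We apply Lemma~\ref{lem:w-semisimple}(2) to the $w$-module $N$ with the family $\{M_i\}_{i=1}^{n-1}$ and with $T=M_n$. This gives some $i\le n-1$ with $M_n\cong M_i$, which contradicts the assumption that the $M_i$ are pairwise non-isomorphic.

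Alternatively, one can avoid Lemma~\ref{lem:w-semisimple}. Some projection $p_i:N\to M_i$ restricts to a nonzero map $M_n\to M_i$. Its kernel is a $w$-submodule of $M_n$: kernels of maps into $\GV$-torsion-free modules are $w$-closed. Its image is a nonzero submodule of $M_i$ that is isomorphic to $M_n$, hence is a $w$-module and so a $w$-submodule of $M_i$. By $w$-simplicity of both $M_n$ and $M_i$, this map is an isomorphism, again a contradiction.

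The main point needing care is the verification that $M_n\cap N$, and also the kernel and image in the alternative argument, are genuinely $w$-submodules. This is what allows $w$-simplicity to be invoked, and it rests on the fact that $w$-closures of submodules of a $w$-module are computed inside that module.
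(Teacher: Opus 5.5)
Your proof is correct and follows the paper's argument: induction on $n$, $w$-closedness of the intersection with the direct sum of the other summands, $w$-simplicity forcing containment, and a contradiction with pairwise non-isomorphism. The paper only asserts the final isomorphism step, and your justification of it, via Lemma~\ref{lem:w-semisimple}(2) or via the projection argument, fills that in; checking only $M_n\cap N$ is also enough, given the inductive hypothesis.
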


\begin{proof}
We argue by induction on $n$.

The case $n=1$ is trivial.
Assume $n>1$ and the result holds for $n-1$.

Suppose for some $i$,
\[
M_i
\cap
\sum_{j\ne i}M_j
\ne 0.
\]
By induction,
\[
\sum_{j\ne i}M_j
=
\bigoplus_{j\ne i}M_j,
\]
which is a $w$-module.
Hence
\[
M_i
\cap
\sum_{j\ne i}M_j
\]
is a $w$-submodule of $M_i$.
Since $M_i$ is $w$-simple,
\[
M_i
\subseteq
\sum_{j\ne i}M_j.
\]
Thus
\[
M_i
\cong
M_j
\]
for some $j\ne i$,
contradicting the hypothesis.
Therefore all intersections are zero.
\end{proof}

\begin{corollary}\textup{(Unified $w$-analogue of Ishikawa's \cite[Corollary~3.3]{I64})}
\label{cor:w-Ishikawa33}
Let $E$ be a $w$-injective $R$-module and let $Q$
be as in Corollary~\ref{cor:w-Ishikawa32}.
The following are equivalent:
\begin{enumerate}
\item[\textup{(1)}]
$E$ is $w$-faithfully injective.

\item[\textup{(2)}]
$L(E)$ contains an isomorphic copy of every
$\tau_w$-simple $R$-module.

\item[\textup{(3)}]
$L(E)$ has a direct summand isomorphic to $Q$.
\end{enumerate}
\end{corollary}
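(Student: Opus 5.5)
I will prove the cycle (1)$\Rightarrow$(2)$\Rightarrow$(3)$\Rightarrow$(1), after first identifying the $\tau_w$-simple modules. The key preliminary step, and the one I expect to be the main obstacle, is to show that, up to $w$-isomorphism, the $\tau_w$-simple modules are exactly the residue fields $R/\fkm$ with $\fkm\in w\text{-}\Max(R)$. Moreover, $R/\fkm$ is already a $w$-module, and it is $w$-simple. One direction is easy. Since $\fkm$ is a $w$-ideal, $R/\fkm$ is $\GV$-torsion-free. Its only submodules are $0$ and itself, so it is $w$-simple, and hence $\tau_w$-simple in the sense of \cite{G86}. For the converse, let $M$ be $\tau_w$-simple. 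Then $L(M)$ is a $w$-simple $w$-module. I would pick $0\neq x\in L(M)$. Then $Rx\cong R/\Ann(x)$, and $\Ann(x)$ is a proper $w$-ideal, so $\Ann(x)\subseteq\fkm$ for some $\fkm\in w\text{-}\Max(R)$. By $w$-simplicity, $(Rx)_w=L(M)$. I expect to argue that $\Ann(x)$ is maximal among $w$-ideals, since a $w$-submodule $(\fkm x)_w$ would otherwise be a proper nonzero $w$-submodule. This gives $L(M)\cong (R/\fkm)_w=R/\fkm$, using that $R/\fkm$ is a $w$-module. The careful point is checking that $(\fkm x)_w$ is proper, and for this I would localize at $\fkm$ via Proposition~\ref{prop:envelope-equiv}. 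I will also interpret ``contains an isomorphic copy'' as applying to the $w$-envelope, i.e.\ to $R/\fkm$.

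\textbf{(1)$\Rightarrow$(2).} Assume (1). By Theorem~\ref{thm:w-ishi-31}(6), $\Hom_R(R/\fkm,L(E))=\hom_w(R/\fkm,E)\neq0$ for every $\fkm$. Any nonzero map out of the simple module $R/\fkm$ is injective, so $L(E)$ contains a copy of every $R/\fkm$. By the preliminary step, this is (2).

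\textbf{(2)$\Rightarrow$(3).} Inside $L(E)$, choose submodules $T_\fkm\cong R/\fkm$, one for each $\fkm$. These are pairwise non-isomorphic $w$-simple submodules, since their annihilators differ. Lemma~\ref{lem:sum-of-w-simple} applies to every finite subfamily, so the full family is independent and $S:=\sum_\fkm T_\fkm=\bigoplus_\fkm T_\fkm\cong\bigoplus_\fkm R/\fkm$. By Corollary~\ref{cor:w-injective}, $L(E)$ is an injective $R$-module. Therefore $L(E)$ contains an injective hull of $S$, and it splits off as a direct summand isomorphic to $\E_R(\bigoplus_\fkm R/\fkm)=Q$.

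\textbf{(3)$\Rightarrow$(1).} Write $L(E)\cong Q\oplus E''$. Since $L(E)$ is a $w$-module, the summand $E''$ is a $w$-module. It is injective as a direct summand of an injective module, hence $w$-injective. Now $Q$ is $w$-faithfully injective by Corollary~\ref{cor:w-Ishikawa32}(2). Corollary~\ref{cor:w-Ishikawa32}(1) then shows that $Q\oplus E''$ is $w$-faithfully injective. By Remark~\ref{rem:w-faith-injective-closed-w-iso}, $E$ is $w$-faithfully injective, since it is $w$-isomorphic to $L(E)\cong Q\oplus E''$.
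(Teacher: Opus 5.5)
Your cycle (1)$\Rightarrow$(2)$\Rightarrow$(3)$\Rightarrow$(1) is the same as the paper's, and (3)$\Rightarrow$(1) is fine. The gap is in your preliminary step, which rests on a false claim.

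A maximal $w$-ideal $\fkm$ is prime, but in general it is not a maximal ideal. So $R/\fkm$ is a domain, not a residue field. It is not a simple module, and in general it is not a $w$-module. For example, in $R=k[x,y]$ the height-one prime $\fkm=(x)$ is a maximal $w$-ideal, and $R/\fkm\cong k[y]$ has the proper submodule $y\,k[y]$. Moreover, $J=(x,y)\in\GV(R)$ and $J\cdot y^{-1}\subseteq k[y]$ inside $k(y)\subseteq \E_R(R/\fkm)$, so $y^{-1}\in (R/\fkm)_w\setminus R/\fkm$.

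Your argument that $L(M)\cong (R/\fkm)_w$ is correct, but it cannot be finished with ``$=R/\fkm$''. This breaks three steps as written:
\begin{enumerate}
\item In (1)$\Rightarrow$(2), ``any nonzero map out of the simple module $R/\fkm$ is injective'' has no basis.
\item Also in (1)$\Rightarrow$(2), an embedding $R/\fkm\hookrightarrow L(E)$ is not yet an embedding of $L(M)\cong (R/\fkm)_w$.
\item In (2)$\Rightarrow$(3), the copies $T_\fkm\cong R/\fkm$ are not $w$-submodules, so Lemma~\ref{lem:sum-of-w-simple} does not apply to them.
\end{enumerate}

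The repairs are short and amount to working with $(R/\fkm)_w$ throughout, as the paper does.

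\textbf{Injectivity.} If $f:R/\fkm\to L(E)$ is nonzero, then $\ker f=I/\fkm$ with $R/I$ embedded in the $\GV$-torsion-free module $L(E)$. Hence $I$ is a proper $w$-ideal containing $\fkm$, which forces $I=\fkm$.

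\textbf{Extension to the envelope.} Since $L(E)$ is injective, $f$ extends to a map $\E_R(R/\fkm)\to L(E)$. This extension is injective because $R/\fkm$ is essential in $\E_R(R/\fkm)$. Restricting it gives $(R/\fkm)_w\hookrightarrow L(E)$.

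\textbf{Independence.} By your own preliminary argument, $(R/\fkm)_w=L(R/\fkm)$ is $w$-simple. Every nonzero $z\in (R/\fkm)_w$ has $\Ann_R(z)=\fkm$. Indeed, if $Jz\subseteq R/\fkm$ with $J\in\GV(R)$, then $J\fkm z=0$, so $\fkm z=0$ by $\GV$-torsion-freeness; and $\Ann_R(z)$ is a proper $w$-ideal. So these copies are pairwise non-isomorphic, and Lemma~\ref{lem:sum-of-w-simple} applies. Alternatively, independence of your $T_\fkm$ follows directly. In a relation $\sum_i t_i=0$ with $t_1\neq 0$, pick $a\in \fkm_2\cdots\fkm_n\setminus\fkm_1$. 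Then $at_1=0$, contradicting $\Ann_R(t_1)=\fkm_1$.

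With these fixes your proof goes through. Your self-contained identification of $L(M)$ replaces the external result the paper cites.

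Your reading of (2) through $w$-envelopes is in fact necessary, not just convenient. When $R$ is not a DW-ring, a $\tau_w$-simple module can have nonzero $\GV$-torsion, for example $R/\fkm\oplus R/J$ with $J$ a proper $\GV$-ideal. Such a module cannot embed in the $\GV$-torsion-free module $L(E)$. The paper's argument for (1)$\Rightarrow$(2) (``$f$ must be injective'') only works for $\GV$-torsion-free $M$.
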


\begin{proof}
(1)$\Rightarrow$(2)
Let $M$ be a $\tau_w$-simple $R$-module.
By \cite[Corollary 2.4 and Proposition 2.3]{KZL19},
$M_w
\cong
(R/\fkm)_w$
for some $\fkm\in w\text{-}\Max(R)$.

Since $E$ is $w$-faithfully injective,
Theorem~\ref{thm:w-ishi-31}(6) gives
$\hom_w(R/\fkm,E)\ne 0.$
Hence
\[
\Hom_R(M,L(E))
\cong
\Hom_R(M_w,L(E))
\ne 0.
\]

Let $f:M\to L(E)$ be nonzero.
Because $M$ is $\tau_w$-simple and
$\ker(f)$ is $\tau_w$-closed,
$f$ must be injective.
Thus $M$ embeds into $L(E)$.

\smallskip
(2)$\Rightarrow$(3)
For each $\fkm\in w\text{-}\Max(R)$,
$(R/\fkm)_w$ is both $\tau_w$-simple
and $w$-simple.
By (2), there exists a monomorphism
\[
f_\fkm:
(R/\fkm)_w
\to
L(E).
\]

Let
\[
f:
\bigoplus_{\fkm}
(R/\fkm)_w
\to
L(E)
\]
be defined by
\[
f([x_\fkm])
=
\sum_\fkm f_\fkm(x_\fkm).
\]

Since the images are pairwise non-isomorphic
$w$-simple submodules,
Lemma~\ref{lem:sum-of-w-simple}
implies $f$ is injective.

Because $L(E)$ is injective and
$Q$ is the injective envelope of
$\bigoplus_\fkm R/\fkm$,
$f$ extends to a monomorphism
$g:
Q
\to
L(E).$
Since $Q$ is injective,
$g$ splits.
Thus $Q$ is a direct summand of $L(E)$.

\smallskip
(3)$\Rightarrow$(1)
If
$L(E)
=
Q
\oplus
L$
with $L$ injective,
then by Corollary~\ref{cor:w-Ishikawa32},
$L(E)$ is $w$-faithfully injective.
Since $E$ and $L(E)$ are $w$-isomorphic,
$E$ is $w$-faithfully injective.
\end{proof}

\begin{corollary}
Let $E$ be a $w$-injective $w$-module over $R$
(equivalently, $E$ is $\GV$-torsion-free and injective),
and let $Q$ be as in Corollary~\ref{cor:w-Ishikawa32}.
Then the following conditions are equivalent:
\begin{enumerate}
\item[\textup{(1)}]
$E$ is $w$-faithfully injective.

\item[\textup{(2)}]
$E$ contains an isomorphic image of every $\tau_w$-simple $R$-module.

\item[\textup{(3)}]
$E$ has a direct summand isomorphic to $Q$.
\end{enumerate}
\end{corollary}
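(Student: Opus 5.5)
This corollary is the special case of Corollary~\ref{cor:w-Ishikawa33} in which $E$ is itself a $w$-module, so the plan is to reduce to that result by identifying $L(E)$ with $E$. We first record the parenthetical equivalence. If $E$ is a $w$-module, then Proposition~\ref{prop:injective-obj} ((4)$\Leftrightarrow$(6)) shows that $E$ is $w$-injective if and only if $E$ is an injective $R$-module; being a $w$-module, it is $\GV$-torsion-free. Conversely, suppose $E$ is $\GV$-torsion-free and injective. Then $E$ is its own injective hull, so $E\subseteq E_w\subseteq \E(E)=E$ and hence $E_w=E$. Thus $E$ is a $w$-module, and it is $w$-injective by Corollary~\ref{exm:injective-obj} (or by Proposition~\ref{prop:injective-obj}).

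Next we identify $L(E)$ with $E$. Since $\tor_{\GV}(E)=0$, we have $F(E)=E$, and therefore $L(E)=E_w=E$ by Definition~\ref{def:two-closures}(2). The canonical map $c_E$ is then the identity.

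Now apply Corollary~\ref{cor:w-Ishikawa33} to $E$, which is legitimate because $E$ is $w$-injective. Its conditions (1), (2), (3) become exactly conditions (1), (2), (3) of the present statement once $L(E)$ is replaced by $E$. Condition (2) there speaks of $L(E)$ containing an isomorphic copy of every $\tau_w$-simple module; here it reads as $E$ containing an isomorphic image of such a module, and these are the same condition. Condition (3) there concerns a direct summand of $L(E)$ isomorphic to $Q$, which is now a direct summand of $E$.

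There is no real obstacle here. The only point needing care is the identity $L(E)=E$, which uses the fact that an injective torsion-free module coincides with its injective hull, so the $w$-envelope cannot enlarge it. The phrase ``isomorphic image'' in (2) should be read as an isomorphic copy, that is, an embedding, which is what the proof of (1)$\Rightarrow$(2) in Corollary~\ref{cor:w-Ishikawa33} actually produces.
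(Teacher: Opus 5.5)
Your reduction is the one the paper intends: the corollary is given without a separate proof, because for a $\GV$-torsion-free injective $E$ one has $\E(E)=E$, hence $L(E)=E_w=E$, and (1)--(3) then become the conditions of Corollary~\ref{cor:w-Ishikawa33} verbatim. One caution about your closing remark: a nonzero map $f$ from a $\tau_w$-simple module $M$ into the $\GV$-torsion-free module $E$ has $\ker(f)=\tor_{\GV}(M)$ (the kernel is a proper $\tau_w$-pure submodule), so $f$ is an embedding only when $\tor_{\GV}(M)=0$ --- e.g.\ $R/\fkm\oplus R/J$ with $\fkm\in w\text{-}\Max(R)$ and $J\neq R$ a $\GV$-ideal is $\tau_w$-simple yet embeds in no $\GV$-torsion-free module, while $Q$ satisfies (1) --- so the argument of Corollary~\ref{cor:w-Ishikawa33} really yields only a copy of $M/\tor_{\GV}(M)$, and (2) holds only in that weaker reading (your embedding reading is right only when $R$ is a DW-ring).
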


\begin{proposition}\textup{($w$-analogue of Ishikawa's \cite[Proposition~3.4]{I64})}
\label{prop:w-Ishikawa34}
Let $R$ be a commutative ring.
If every nonzero $w$-injective $R$-module is $w$-faithfully injective,
then $R$ is $w$-local; that is,
\[
|\,w\text{\rm-}\Max(R)\,|=1.
\]
Consequently, $R$ is a DW-ring.
\end{proposition}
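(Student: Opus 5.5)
We argue by contraposition, in the spirit of Ishikawa's proof of \cite[Proposition~3.4]{I64}. Suppose $R$ has two distinct maximal $w$-ideals $\fkm\neq\mathfrak n$. Take the test module
\[
E:=\E_R(R/\fkm),
\]
the injective envelope of $R/\fkm$. It is nonzero and injective, hence $w$-injective by Proposition~\ref{prop:injective-obj}. It is also $\GV$-torsion-free. Indeed, $R/\fkm$ is $\GV$-torsion-free because $\fkm$ is a $w$-ideal. Moreover, $\tor_{\GV}(E)\cap R/\fkm=0$ and $R/\fkm$ is essential in $E$, so $\tor_{\GV}(E)=0$. Therefore $L(E)=E$. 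We will show that $E$ is not $w$-faithfully injective, contradicting the hypothesis.

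By Theorem~\ref{thm:w-ishi-31}(6), it suffices to prove $\hom_w(R/\mathfrak n,E)=\Hom_R\big((R/\mathfrak n)_w,E\big)=0$. By Proposition~\ref{prop:envelope-basic}(4), restriction along the essential inclusion $R/\mathfrak n\hookrightarrow (R/\mathfrak n)_w$ is injective on maps into the $w$-module $E$; alternatively, one can use the extension property from $\GV$-torsion-freeness. So it suffices to show $\Hom_R(R/\mathfrak n,E)=0$. This group is the socle-type submodule $\{x\in E\mid \mathfrak n x=0\}$. Let $x$ be a nonzero element of it. By essentiality, some $r$ gives $0\neq rx\in R/\fkm$, and $\mathfrak n(rx)=0$ as well. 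Since $\fkm\neq\mathfrak n$ are both maximal $w$-ideals, we have $\mathfrak n\not\subseteq\fkm$. Choose $s\in\mathfrak n\setminus\fkm$. Then $s$ acts invertibly on the field $R/\fkm$, so $s(rx)\neq0$, which contradicts $\mathfrak n(rx)=0$. Hence $\Hom_R(R/\mathfrak n,E)=0$, so $E$ fails condition (6). This proves $|w\text{-}\Max(R)|\le1$.

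It remains to exclude $w\text{-}\Max(R)=\varnothing$. This happens only if $R=0$, and that case is trivial or excluded. So $R$ is $w$-local with unique maximal $w$-ideal $\fkm$.

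For the \emph{Consequently} clause, let $\fkm$ be the unique maximal $w$-ideal. A finitely generated ideal $J$ is a $\GV$-ideal if and only if $J\not\subseteq\fkm$. The step I expect to be the main obstacle is deducing $\GV(R)=\{R\}$, i.e.\ that no proper ideal $J\not\subseteq \fkm$ is $\GV$. I would try the following. If $J\in\GV(R)$ with $J\ne R$, then $J$ lies in some maximal ideal $\mathfrak p$, which is not a $w$-ideal. The module $E_R(R/\mathfrak p)$ is then $\GV$-torsion: $J$ annihilates $R/\mathfrak p$, hence the essential extension is torsion. So $\hom_w(-,E_R(R/\mathfrak p))=0$.

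Since this module is a nonzero injective and hence $w$-injective, the hypothesis would make it $w$-faithfully injective. But a functor that is identically zero cannot be faithful on the nonzero object $(R/\fkm)_w$, which is a contradiction. Thus every maximal ideal is a $w$-ideal, so every proper finitely generated ideal lies in a maximal $w$-ideal. This gives $\GV(R)=\{R\}$, i.e.\ $R$ is a DW-ring. The same argument also covers $\fkm$ being the only maximal ideal, so $R$ is in fact local.
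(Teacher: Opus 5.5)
Your proof that $|w\text{-}\Max(R)|\le 1$ is correct and is essentially the paper's argument: the test module $\E_R(R/\fkm)$, and a nonzero element of $R/\fkm$ annihilated by $\mathfrak n$. There is one slip. $R/\fkm$ need not be a field, because maximal $w$-ideals are prime but need not be maximal (e.g.\ $\fkm=(x)$ in $k[x,y]$). The correct reason that $s\in\mathfrak n\setminus\fkm$ cannot kill $0\neq rx\in R/\fkm$ is that $\fkm$ is prime, so $s$ acts injectively on the domain $R/\fkm$. Your reduction from $(R/\mathfrak n)_w$ to $R/\mathfrak n$, via injectivity of restriction into the $\GV$-torsion-free module $E$, is sound. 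It is in fact more careful than the paper, which identifies $(R/\mathfrak n)_w$ with $R/\mathfrak n$; that fails in the same example, since $1/y\in(R/(x))_w$ via the $\GV$-ideal $(x,y)$.

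The genuine gap is in the \emph{Consequently} part. You claim that $\E_R(R/\mathfrak p)$ is $\GV$-torsion because it is an essential extension of the $\GV$-torsion module $R/\mathfrak p$. That is exactly the stability of $\tau_w$, and it is not automatic. Finite-type torsion theories defined by vanishing of localizations need not be stable over non-Noetherian rings. For instance, take the theory ``$M_{\mathfrak n}=0$'' on $R=\prod_{\mathbb N}k$ with $\mathfrak n\supseteq\bigoplus_{\mathbb N}k$ maximal: the torsion module $\bigoplus_{\mathbb N}k$ is an essential submodule of $R$, but $R_{\mathfrak n}\neq 0$. Nothing in the paper proves stability for $\tau_w$, so as written the DW conclusion (and your remark that $R$ is local) is not established.

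The detour through the envelope is unnecessary. If $J\in\GV(R)$ is proper, then $T=R/J$ is a nonzero $\GV$-torsion module with $L(T)=0$, which is injective. So $T$ is $w$-injective by Corollary~\ref{cor:w-injective}. But $\hom_w(R,T)=0$ for the nonzero $w$-module $R$, so $T$ is not $w$-faithfully injective. Hence the hypothesis alone forces $\GV(R)=\{R\}$.

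Alternatively, argue directly from $w\text{-}\Max(R)=\{\fkm\}$. For $s\notin\fkm$, the ideal $sR$ lies in no maximal $w$-ideal, so $sR\in\GV(R)$. Injectivity of $\varphi_{sR}$ gives $\ann_R(s)=0$, and surjectivity applied to the map $sx\mapsto x$ gives $u$ with $us=1$. Thus $R$ is local with maximal ideal $\fkm$ and $\GV(R)=\{R\}$. This is the fact the paper simply quotes from the literature instead of proving.
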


\begin{proof}
Assume every nonzero $w$-injective $R$-module is $w$-faithfully injective.

Suppose, toward a contradiction, that
\[
|\,w\text{\rm-}\Max(R)\,|\ge 2,
\]
and choose distinct $w$-maximal ideals
$\mathfrak m\ne\mathfrak n$.

Then $\E_R(R/\mathfrak m)$ is $w$-injective,
hence $w$-faithfully injective by hypothesis.
Therefore,
\[
\hom_w(R/\mathfrak n,\E_R(R/\mathfrak m))\ne 0.
\]
Since $(R/\mathfrak n)_w\cong R/\mathfrak n$,
this gives
\[
\Hom_R(R/\mathfrak n,\E_R(R/\mathfrak m))\ne 0.
\]

Let
\[
f:R/\mathfrak n\to \E_R(R/\mathfrak m)
\]
be nonzero.

Because $R/\mathfrak n$ is $w$-simple and
$\ker(f)$ is a $w$-submodule,
$f$ must be injective.
Thus
\[
R/\mathfrak n
\cong
f(R/\mathfrak n)
\subseteq
\E_R(R/\mathfrak m).
\]

Since $\E_R(R/\mathfrak m)$
is an essential extension of $R/\mathfrak m$,
we must have
\[
f(R/\mathfrak n)\cap R/\mathfrak m\ne 0.
\]
Choose
\[
0\ne x\in f(R/\mathfrak n)\cap R/\mathfrak m.
\]
Then
\[
\ann_R(x)=\mathfrak m
\quad\text{and}\quad
\ann_R(x)=\mathfrak n,
\]
so $\mathfrak m=\mathfrak n$,
a contradiction.

Hence $|\,w\text{\rm-}\Max(R)\,|=1$,
so $R$ is $w$-local.

Finally, by \cite[Proposition 2.6]{KZL19},
a $w$-local ring is a DW-ring.
\end{proof}

A commutative ring $R$ is called \emph{$w$-Noetherian} if every ascending chain of $w$-ideals of $R$ stabilizes. 
Equivalently, every $w$-ideal of $R$ is of finite type; that is, for each $w$-ideal $I$ of $R$, there exists a finitely generated ideal $J \subseteq I$ such that $I = J_w$ (\cite[Theorem 6.8.5]{WK24}).

\begin{proposition}\textup{($w$-analogue of Ishikawa's \cite[Proposition~3.5]{I64})}
\label{prop:w-localization-criterion}
Let $R$ be a $w$-Noetherian commutative ring and let
$E$ be a $w$-injective $R$-module.
Then $E$ is $w$-faithfully injective
if and only if
$E_{\mathfrak m}$ is a faithfully injective
$R_{\mathfrak m}$-module for every
$\mathfrak m\in w\text{\rm-}\Max(R)$.
\end{proposition}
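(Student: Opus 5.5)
We reduce both sides to the residue-field test of Theorem~\ref{thm:w-ishi-31}(6) and compare them prime by prime. Since $E$ is $w$-injective, Corollary~\ref{cor:w-injective} lets us replace $E$ by $L(E)$ (Remark~\ref{rem:w-faith-injective-closed-w-iso}). Then $L(E)$ is an injective $\GV$-torsion-free $R$-module, and $E_{\mathfrak m}\cong L(E)_{\mathfrak m}$ because the kernel and cokernel of $c_E$ are $\GV$-torsion and are therefore killed by localization at $\mathfrak m\in w\text{-}\Max(R)$. By Theorem~\ref{thm:w-ishi-31}, $E$ is $w$-faithfully injective if and only if
\[
\Hom_R\big((R/\mathfrak m)_w,L(E)\big)\cong \Hom_R(R/\mathfrak m,L(E))\neq 0
\qquad\text{for all }\mathfrak m\in w\text{-}\Max(R),
\]
where the isomorphism uses that $R/\mathfrak m$ is a $w$-module. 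Classically, an $R_{\mathfrak m}$-module $E_{\mathfrak m}$ is faithfully injective if and only if it is injective and $\Hom_{R_{\mathfrak m}}(R_{\mathfrak m}/\mathfrak mR_{\mathfrak m},E_{\mathfrak m})\neq 0$; this is Ishikawa's Theorem~3.1 over a local ring, whose only maximal ideal is $\mathfrak mR_{\mathfrak m}$. So the proof has two parts: an injectivity part and a socle part.

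\textbf{Injectivity of $E_{\mathfrak m}$.} Here the $w$-Noetherian hypothesis enters. For a $w$-Noetherian ring, $R_{\mathfrak m}$ is Noetherian for every $\mathfrak m\in w\text{-}\Max(R)$, since every ideal of $R_{\mathfrak m}$ is the localization of a $w$-ideal, which has finite type. I would also invoke the known fact that, over a $w$-Noetherian ring, the localization of a $\GV$-torsion-free injective module at a maximal $w$-ideal is injective. This can be checked with Baer's criterion on finitely generated ideals of $R_{\mathfrak m}$: such ideals come from finitely presented-type ideals, so $\Hom$ commutes with localization by \cite[Theorem~6.3.28]{WK24}, as in the proof of Proposition~\ref{prop:w-faith-projective-local}. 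Hence $E_{\mathfrak m}$ is an injective $R_{\mathfrak m}$-module.

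\textbf{Comparison of the socle conditions.} Because $R/\mathfrak m$ is finitely presented modulo a finite-type ideal ($\mathfrak m=J_w$ with $J$ finitely generated, so $R/J\to R/\mathfrak m$ is a $w$-isomorphism), \cite[Theorem~6.3.28(2)]{WK24} gives
\[
\Hom_R(R/\mathfrak m,L(E))_{\mathfrak m}\cong \Hom_{R_{\mathfrak m}}(R_{\mathfrak m}/\mathfrak mR_{\mathfrak m},E_{\mathfrak m}).
\]
The module $\Hom_R(R/\mathfrak m,L(E))$ is annihilated by $\mathfrak m$, so it is a vector space over the field $R/\mathfrak m$. Every element outside $\mathfrak m$ acts invertibly on it, so it equals its own localization at $\mathfrak m$, and it is nonzero if and only if the localization is nonzero. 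Combining this with the first paragraph yields both directions: if $E$ is $w$-faithfully injective, each $E_{\mathfrak m}$ is injective with nonzero socle, hence faithfully injective; conversely, if every $E_{\mathfrak m}$ is faithfully injective, the socles are nonzero, and Theorem~\ref{thm:w-ishi-31}(6) applies. The converse direction does not even need the injectivity of $E_{\mathfrak m}$.

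The main obstacle is the injectivity of $E_{\mathfrak m}$. If the Baer-criterion argument becomes delicate, I would fall back on the structure theory: over a $w$-Noetherian ring, $L(E)$ decomposes as a direct sum of indecomposables $\E_R(R/\mathfrak p)$ with $\mathfrak p$ a prime $w$-ideal. Each summand localizes either to $0$ or to $\E_{R_{\mathfrak m}}(R_{\mathfrak m}/\mathfrak pR_{\mathfrak m})$, and a direct sum of injectives over the Noetherian ring $R_{\mathfrak m}$ is injective.
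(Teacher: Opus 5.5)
Your route is essentially the paper's. You pass to $L(E)$, obtain injectivity of $E_{\mathfrak m}$ from the $w$-Noetherian localization theorem, and compare $\Hom_R(R/\mathfrak m,L(E))$ with its localization. (The paper simply cites \cite[Theorem~2.13]{KW13} for the injectivity; your Baer-criterion sketch via \cite[Theorem~6.3.28]{WK24} is a valid proof of it.) For the converse, the paper tests all proper $w$-ideals through Theorem~\ref{thm:w-ishi-31}(5), whereas you use only the residue test (6); that is a harmless economy.

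One step, however, is justified by a false claim: a maximal $w$-ideal is prime but need not be a maximal ideal. In $R=k[x,y]$ (Noetherian, hence $w$-Noetherian), $\mathfrak m=(x)$ is a maximal $w$-ideal, because the primes properly containing it are the maximal ideals $(x,p(y))$ with $p$ irreducible, and these are $\GV$-ideals. Yet $R/\mathfrak m\cong k[y]$ is not a field. So ``vector space over the field $R/\mathfrak m$'' does not show that $R\setminus\mathfrak m$ acts invertibly on $\Hom_R(R/\mathfrak m,L(E))$. This is exactly what your forward direction needs to pass from $\Hom_R(R/\mathfrak m,L(E))\neq 0$ to the nonvanishing of its localization at $\mathfrak m$. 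The paper uses \cite[Lemma~2.12(1)]{KW13} at this point.

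The repair is short. Let $s\notin\mathfrak m$. Then $\mathfrak m+Rs$ lies in no maximal $w$-ideal, so $(\mathfrak m+Rs)_w=R$, and some $J\in\GV(R)$ satisfies $J\subseteq\mathfrak m+Rs$. Hence any $q\in L(E)$ with $\mathfrak m q=0=sq$ has $Jq=0$, so $q=0$ because $L(E)$ is $\GV$-torsion-free. Thus $s$ acts injectively on $\Hom_R(R/\mathfrak m,L(E))\cong(0:_{L(E)}\mathfrak m)$. It also acts surjectively, since $R/\mathfrak m$ is a domain and $L(E)$ is injective. So this module embeds in, indeed equals, its localization at $\mathfrak m$.

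Likewise, your remark that $R/\mathfrak m$ is a $w$-module is false in general. In the same example, take $J=(x,y)\in\GV(R)$ and $1/y\in k(y)\subseteq\E_R(R/\mathfrak m)$, with $x$ acting as $0$. Then $J\cdot(1/y)\subseteq R/\mathfrak m$, so $1/y\in(R/\mathfrak m)_w\setminus R/\mathfrak m$. The isomorphism $\Hom_R((R/\mathfrak m)_w,L(E))\cong\Hom_R(R/\mathfrak m,L(E))$ you use nevertheless holds. It follows from the adjunction $\Hom_R(A_w,N)\cong\Hom_R(A,N)$ for $\GV$-torsion-free $A$ and $w$-modules $N$. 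With these two corrections your proof is complete.
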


\begin{proof}
($\Rightarrow$)
Assume $E$ is $w$-faithfully injective and
fix $\mathfrak m\in w\text{\rm-}\Max(R)$.

By Corollary~\ref{cor:w-injective},
$L(E)$ is a $\GV$-torsion-free injective $R$-module.
Hence by \cite[Theorem 2.13]{KW13},
\[
E_{\mathfrak m}\cong L(E)_{\mathfrak m}
\]
is injective over $R_{\mathfrak m}$.

Since $E$ is $w$-faithfully injective,
\[
\hom_w(R/\mathfrak m,E)\ne 0.
\]
Thus
\[
\Hom_R(R/\mathfrak m,L(E))\ne 0.
\]

By \cite[Lemma 2.12(1)]{KW13},
for $\mathfrak n\ne\mathfrak m$,
\[
\Hom_R(R/\mathfrak m,L(E))_{\mathfrak n}=0.
\]
Therefore
\[
\Hom_R(R/\mathfrak m,L(E))_{\mathfrak m}\ne 0.
\]

Localizing yields
\[
\Hom_{R_{\mathfrak m}}
\big(R_{\mathfrak m}/\mathfrak mR_{\mathfrak m},
E_{\mathfrak m}\big)
\ne 0.
\]

Over a local ring,
injectivity together with this condition
is equivalent to faithful injectivity.
Hence $E_{\mathfrak m}$ is faithfully injective.

\smallskip
($\Leftarrow$)
Suppose $E_{\mathfrak m}$ is faithfully injective
for every $\mathfrak m\in w\text{\rm-}\Max(R)$.

Let $I$ be a proper $w$-ideal of $R$.
Then there exists
$\mathfrak n\in w\text{\rm-}\Max(R)$
with
$I_{\mathfrak n}\ne R_{\mathfrak n}$.

Faithful injectivity of $E_{\mathfrak n}$
implies
\[
\Hom_{R_{\mathfrak n}}
\big(R_{\mathfrak n}/I_{\mathfrak n},
E_{\mathfrak n}\big)
\ne 0.
\]

But
\[
\hom_w(R/I,E)_{\mathfrak n}
\cong
\Hom_{R_{\mathfrak n}}
\big(R_{\mathfrak n}/I_{\mathfrak n},
E_{\mathfrak n}\big),
\]
so
\[
\hom_w(R/I,E)_{\mathfrak n}\ne 0.
\]

Hence
\[
\hom_w(R/I,E)\ne 0.
\]
By Theorem~\ref{thm:w-ishi-31}(5),
$E$ is $w$-faithfully injective.
\end{proof}

\begin{remark}
The $w$-Noetherian hypothesis in Proposition~\ref{prop:w-localization-criterion}
is used only to ensure that for every proper $w$-ideal $I$ of $R$,
the module $R/I$ is of finitely presented type.
No finiteness assumption on $E$ is required beyond $w$-injectivity.
\end{remark}

\begin{corollary}
\label{cor:support-test-w-UI}
Assume that $R$ is $w$-Noetherian and let $E$ be a $w$-injective
$w$-module over $R$.
Then the following conditions are equivalent.
\begin{enumerate}
\item[\textup{(1)}]
$E$ is $w$-faithfully injective (equivalently, $w$-universal injective).

\item[\textup{(2)}]
Every $w$-maximal ideal lies in the support of $E$, that is,
\[
w\text{\rm-}\Max(R)\subseteq \Supp_R(E),
\]
equivalently,
\[
\Ann_R(E)\subseteq \fkm
\quad
\text{for every }\fkm\in w\text{\rm-}\Max(R).
\]
\end{enumerate}
\end{corollary}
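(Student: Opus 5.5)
The plan is to reduce everything to the local criterion of Proposition~\ref{prop:w-localization-criterion} and then argue prime by prime. Since $E$ is a $w$-injective $w$-module, Corollary~\ref{cor:w-injective} gives $L(E)=E$ injective and $\GV$-torsion-free. By Corollary~\ref{cor:w-faith-injective}, $w$-faithful injectivity and $w$-universal injectivity of $E$ coincide. By Proposition~\ref{prop:w-localization-criterion} (this is where $w$-Noetherianity enters), $E$ is $w$-faithfully injective if and only if $E_{\mathfrak m}$ is faithfully injective over $R_{\mathfrak m}$ for every $\mathfrak m\in w\text{-}\Max(R)$. So the corollary should reduce to the statement that, over the local ring $R_{\mathfrak m}$, the injective module $E_{\mathfrak m}$ is faithfully injective if and only if $E_{\mathfrak m}\neq 0$.

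\emph{(1)$\Rightarrow$(2).} This direction is routine. Theorem~\ref{thm:w-ishi-31}(6) gives $\Hom_R(R/\mathfrak m,E)\neq 0$, so some $0\neq x\in E$ has $\Ann_R(x)=\mathfrak m$. Then $x/1\neq 0$ in $E_{\mathfrak m}$, since otherwise some $s\notin\mathfrak m$ would kill $x$. Hence $\mathfrak m\in\Supp_R(E)$. Moreover $\Ann_R(E)\subseteq\Ann_R(x)=\mathfrak m$. The equivalence of the two formulations in (2) is the usual one, $\Supp_R(E)\subseteq V(\Ann_R(E))$, with the converse checked on the element $x$.

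\emph{(2)$\Rightarrow$(1).} Here I expect the main obstacle, and in fact I expect the step to fail as literally stated. Over a Noetherian local ring (and $R_{\mathfrak m}$ is Noetherian when $R$ is $w$-Noetherian), a nonzero injective module is a direct sum of modules $\E(R_{\mathfrak m}/\mathfrak p R_{\mathfrak m})$. Such a sum is faithfully injective exactly when the summand $\E(R_{\mathfrak m}/\mathfrak mR_{\mathfrak m})$ occurs, that is, when $\mathfrak m R_{\mathfrak m}\in\mathrm{Ass}(E_{\mathfrak m})$. Nonvanishing of $E_{\mathfrak m}$ does not give this.

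Concretely, take $R=\mathbb Z$. This is a DW-ring, so $w\text{-}\Max(R)=\Max(R)$, and $E=\mathbb Q$ is an injective $w$-module with $\Ann_R(E)=0$ and $\Supp_R(E)=\Spec(R)$. Nevertheless $\Hom_{\mathbb Z}(\mathbb Z/p,\mathbb Q)=0$, so $E$ is not $w$-faithfully injective by Theorem~\ref{thm:w-ishi-31}(6). Thus the implication (2)$\Rightarrow$(1) cannot be proved from support or annihilator data alone.

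The argument above goes through verbatim if ``support'' in (2) is replaced by ``associated primes'', that is, by the condition $w\text{-}\Max(R)\subseteq\mathrm{Ass}_R(E)$, equivalently $\Hom_R(R/\mathfrak m,E)\neq0$ for all $\mathfrak m$. Under that condition, Theorem~\ref{thm:w-ishi-31}(6) gives (1) directly. So I would prove (1)$\Rightarrow$(2) as above and record the corrected form of the converse.
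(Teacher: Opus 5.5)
Your proposal is correct, and it is more accurate than the paper. Your (1)$\Rightarrow$(2) is the paper's argument: Theorem~\ref{thm:w-ishi-31}(6) gives $\Hom_R(R/\fkm,E)\neq 0$, and a nonzero $x$ with $\fkm x=0$ witnesses both $\fkm\in\Supp_R(E)$ and $\Ann_R(E)\subseteq\fkm$.

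You should justify $\Ann_R(x)=\fkm$, since it is not automatic: a maximal $w$-ideal need not be a maximal ideal. It holds because $R/\Ann_R(x)\cong Rx\subseteq E$ is $\GV$-torsion-free. Hence $\Ann_R(x)$ is a proper $w$-ideal containing $\fkm$.

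For (2)$\Rightarrow$(1), the paper's proof rests on the asserted equivalence $\Hom_R(R/\fkm,E)\neq 0 \iff \Ann_R(E)\subseteq\fkm$. It uses the backward implication, which is false. Your example $R=\mathbb Z$, $E=\mathbb Q$ is a valid counterexample to the corollary as stated. Here $\mathbb Z$ is Noetherian and DW, so $\hom_w=\Hom$. $\mathbb Q$ is an injective $w$-module with $\Ann_{\mathbb Z}(\mathbb Q)=0$ and $\Supp_{\mathbb Z}(\mathbb Q)=\Spec(\mathbb Z)$, yet $\Hom_{\mathbb Z}(\mathbb Z/p\mathbb Z,\mathbb Q)=0$. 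So $\mathbb Q$ is neither $w$-faithfully injective nor $w$-universal injective.

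Your corrected condition is right, namely $w\text{-}\Max(R)\subseteq \mathrm{Ass}_R(E)$, equivalently $\Hom_R(R/\fkm,E)\neq 0$ for all $\fkm$. However, it is only a restatement of Theorem~\ref{thm:w-ishi-31}(6), because $\hom_w(R/\fkm,E)\cong\Hom_R(R/\fkm,E)$ when $E$ is a $w$-module. So the $w$-Noetherian hypothesis plays no role in it.

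The ``equivalently'' inside (2) is also false in general. Take $E=\bigoplus_{p\ \mathrm{odd}}\mathbb Z[1/p]/\mathbb Z$ over $\mathbb Z$. This is an injective $w$-module with $\Ann_{\mathbb Z}(E)=0\subseteq 2\mathbb Z$, but $E_{(2)}=0$. So $\Supp_R(E)\subseteq V(\Ann_R(E))$ gives only one direction. The reverse direction in your (1)$\Rightarrow$(2) holds only because (1) supplies the element $x$, and you should not present it as a general equivalence.
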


\begin{proof}
Since $E$ is $w$-injective, the contravariant functor
\[
V_w(-):=\hom_w(-,E)
\]
is exact.

\smallskip
\noindent (1)$\Rightarrow$(2).
If $E$ is $w$-faithfully injective, then by
Theorem~\ref{thm:w-ishi-31}(6),
\[
\hom_w(R/\fkm,E)\ne 0
\quad
\text{for every }\fkm\in w\text{\rm-}\Max(R).
\]
Since $(R/\fkm)_w\cong R/\fkm$, this is equivalent to
\[
\Hom_R(R/\fkm,E)\ne 0.
\]
But
\[
\Hom_R(R/\fkm,E)\ne 0
\quad\Longleftrightarrow\quad
\Ann_R(E)\subseteq \fkm,
\]
which is equivalent to $\fkm\in\Supp_R(E)$.
Hence
\[
w\text{\rm-}\Max(R)\subseteq \Supp_R(E).
\]

\smallskip
\noindent (2)$\Rightarrow$(1).
Assume that
\[
\Ann_R(E)\subseteq \fkm
\quad
\text{for every }\fkm\in w\text{\rm-}\Max(R).
\]
Then
\[
\Hom_R(R/\fkm,E)\ne 0
\quad
\text{for all }\fkm\in w\text{\rm-}\Max(R),
\]
and therefore
\[
\hom_w(R/\fkm,E)\ne 0
\quad
\text{for all }\fkm\in w\text{\rm-}\Max(R).
\]
By Theorem~\ref{thm:w-ishi-31},
$E$ is $w$-faithfully injective.
\end{proof}

\begin{proposition}\textup{($w$-analogue of Ishikawa's \cite[Proposition~3.6]{I64})}
\label{prop:w-Ishikawa36}
Let $M$ be an $R$-module and let $E$ be a $w$-faithfully injective
$R$-module. Put
\[
H\ :=\ \hom_w(M,E).
\]
Then:
\begin{enumerate}
\item[\textup{(1)}]
$M$ is $w$-flat if and only if $H$ is $w$-injective, and if and only if
$H$ is an injective $R$-module.

\item[\textup{(2)}]
$M$ is $w$-faithfully flat if and only if $H$ is $w$-faithfully injective.
\end{enumerate}
\end{proposition}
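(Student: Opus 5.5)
The plan is to reduce everything to the $w$-module category via the adjunction isomorphism
\[
\hom_w(X,H)=\Hom_R\big(L(X),L(H)\big)\cong \Hom_R\big(L(X),\Hom_R(L(M),L(E))\big)\cong\Hom_R\big(L(X)\otimes_R L(M),L(E)\big).
\]
First I will note that $H=\Hom_R(L(M),L(E))$ is already a $w$-module, since $L(E)$ is. Hence $L(H)=H$, and $H$ is $w$-injective if and only if it is injective, by Corollary~\ref{cor:w-injective}. Next, because $L(E)$ is a $w$-module, a map $X\otimes_R M\to L(E)$ factors uniquely through $L(X\otimes_R M)$. Also $L(X)\otimes_R L(M)$ is $w$-isomorphic to $X\otimes_R M$, since tensoring with a $w$-isomorphism gives kernel and cokernel that are $\GV$-torsion. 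Combining these, I obtain a natural isomorphism
\[
\hom_w(X,H)\cong \Hom_R\big(X\otimes_R^{w}M,\,L(E)\big)=V_E\big(U_w(X)\big),
\]
where $V_E:=\Hom_R(-,L(E))$ is restricted to $(R,w)\text{-}\mathrm{mod}$. Thus $V_w^{H}\cong V_E\circ U_w$ as functors $\Mod(R)\to(R,w)\text{-}\mathrm{mod}$.

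For (1), $L(E)$ is an injective object of $(R,w)\text{-}\mathrm{mod}$ by Proposition~\ref{prop:injective-obj}, and it is faithfully injective there because $E$ is $w$-faithfully injective. By Proposition~\ref{prop:w-faithful-equivalences}, $V_E$ is therefore exact and faithful on $(R,w)\text{-}\mathrm{mod}$, so it both preserves and reflects exactness. A composite $V_E\circ U_w$ with $V_E$ faithfully exact is exact if and only if $U_w$ is exact. By Remark~\ref{rem:standing-flat}, $U_w$ is exact if and only if $M$ is $w$-flat. By \cite[Theorem~4.5]{QW15}, $V_w^{H}$ is exact if and only if $H$ is $w$-injective, and as noted this is the same as $H$ being injective.

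For (2), I will assume $M$ is $w$-flat, which is needed on both sides by (1). Then both $U_w$ and $V_w^H$ are exact and annihilate $\mathcal T(R)$, so Theorem~\ref{thm:w-Ishi-11} applies to each. For a nonzero $w$-module $A$, the faithfulness of $V_E$ gives
\[
\hom_w(A,H)=0\iff V_E(A\otimes_R^w M)=0\iff A\otimes_R^w M=0.
\]
Hence criterion (2) of Theorem~\ref{thm:w-ishi-31} for $H$ coincides with criterion (2) of Theorem~\ref{thm:w-def21-plus-full} for $M$.

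The main obstacle is the first step: the $w$-adjunction $\Hom_R(L(X),\Hom_R(L(M),L(E)))\cong\Hom_R(X\otimes^w_R M,L(E))$. In particular I must check that replacing $L(X)\otimes L(M)$ by $X\otimes M$ costs only $\GV$-torsion, and that $\Hom_R(T,L(E))=0$ for $\GV$-torsion $T$. For the first point I would argue via the exactness of $L$ applied to tensoring with the canonical maps $c_X$ and $c_M$, using that $\GV$-torsion is preserved under $-\otimes N$. The second point holds because $L(E)$ is $\GV$-torsion-free.
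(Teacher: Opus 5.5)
Your proposal is correct and follows the paper's proof. It uses the same $w$-tensor--Hom adjunction $\hom_w(X,H)\cong\hom_w(X\otimes_R^{w}M,E)$, the same faithful exactness of $\Hom_R(-,L(E))$ on $(R,w)\text{-}\mathrm{mod}$ to transfer exactness in (1), and the same comparison of the nonvanishing criteria of Theorems~\ref{thm:w-ishi-31} and~\ref{thm:w-def21-plus-full} in (2). The only difference is how the adjunction is checked: the paper goes through $\Hom_R(X,\Hom_R(M,L(E)))$, using that $\Hom_R(M,L(E))$ is a $w$-module isomorphic to $\Hom_R(L(M),L(E))$, and so does not need your lemma that $X\otimes_R M\to L(X)\otimes_R L(M)$ is a $w$-isomorphism. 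That lemma is nonetheless true, for instance by localizing at each $\mathfrak m\in w\text{-}\Max(R)$.
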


\begin{proof}
We first establish the $w$-tensor–Hom adjunction:
for every $R$-module $X$ there is a natural isomorphism
\begin{equation}\label{eq:w-adjunction-Ish36}\tag{$\dagger$}
\hom_w\!\big(X\otimes_R^{\,w} M,\,E\big)
\ \cong\
\hom_w\!\big(X,\,\hom_w(M,E)\big)
\ =\
\hom_w(X,H).
\end{equation}

Indeed, since
\[
X\otimes_R^{\,w}M=L(X\otimes_R M)
\quad\text{and}\quad
\hom_w(A,B)=\Hom_R(L(A),L(B)),
\]
we compute
\begin{align*}
\hom_w\!\big(X\otimes_R^{\,w}M,E\big)
&=\Hom_R\!\big(L(X\otimes_R M),L(E)\big)\\
&\cong \Hom_R\!\big(X\otimes_R M,L(E)\big)\\
&\cong \Hom_R\!\big(X,\Hom_R(M,L(E))\big)\\
&\cong \Hom_R\!\big(L(X),\Hom_R(M,L(E))\big)\\
&\cong \Hom_R\!\big(L(X),\Hom_R(L(M),L(E))\big)\\
&=\hom_w(X,H).
\end{align*}
Naturality follows from functoriality of $L(-)$ and the classical tensor–Hom adjunction.

\smallskip
\noindent (1)
Assume first that $M$ is $w$-flat.
Then the functor
\[
U(-)\ :=\ -\otimes_R^{\,w}M
\]
is exact.
Since $E$ is $w$-injective, the functor
\[
V(-)=\hom_w(-,E)
\]
is exact.
Hence the composite $V\circ U$ is exact.
By \eqref{eq:w-adjunction-Ish36},
\[
V(U(X))\cong \hom_w(X,H)
\]
for all $X$.
Thus the functor $X\mapsto \hom_w(X,H)$ is exact,
which means that $H$ is $w$-injective.

Conversely, assume that $H$ is $w$-injective.
Then $X\mapsto \hom_w(X,H)$ is exact.
By \eqref{eq:w-adjunction-Ish36},
\[
X\longmapsto \hom_w(X\otimes_R^{\,w}M,E)
\]
is exact.
Since $E$ is $w$-faithfully injective,
the functor $\hom_w(-,E)$ is faithfully exact.
Therefore exactness of
\[
X\longmapsto \hom_w(X\otimes_R^{\,w}M,E)
\]
forces
\[
X\longmapsto X\otimes_R^{\,w}M
\]
to be exact.
Hence $M$ is $w$-flat.

Finally, since $H$ is a $w$-module,
$H$ is $w$-injective if and only if it is injective
as an $R$-module by Proposition~\ref{prop:injective-obj}.

\smallskip
\noindent (2)
Assume that $M$ is $w$-faithfully flat.
Then $U(-)=-\otimes_R^{\,w}M$ is faithfully exact.

Let $0\ne X\in (R,w)\text{-}\mathrm{mod}$.
Then $U(X)\ne 0$.
Since $E$ is $w$-faithfully injective,
\[
\hom_w(U(X),E)\ne 0.
\]
By \eqref{eq:w-adjunction-Ish36},
\[
\hom_w(X,H)\cong \hom_w(U(X),E)\ne 0.
\]
Thus $\hom_w(-,H)$ annihilates no nonzero object.
Together with exactness from part (1),
this shows that $H$ is $w$-faithfully injective.

Conversely, assume that $H$ is $w$-faithfully injective.
Let $0\ne X\in (R,w)\text{-}\mathrm{mod}$.
Then
\[
\hom_w(X,H)\ne 0.
\]
By \eqref{eq:w-adjunction-Ish36},
\[
\hom_w(X\otimes_R^{\,w}M,E)\ne 0.
\]
Since $E$ is $w$-faithfully injective,
$\hom_w(Y,E)\ne 0$ implies $Y\ne 0$.
Hence
\[
X\otimes_R^{\,w}M\ne 0
\quad
\text{for all }0\ne X.
\]
Together with $w$-flatness from part (1),
this shows that $U(-)$ is faithfully exact.
Therefore $M$ is $w$-faithfully flat.
\end{proof}

\begin{theorem}\textup{($w$-analogue of Ishikawa's \cite[Theorem~4.1]{I64})}
\label{thm:w-Ishikawa41}
Let $E$ be a $w$-injective $w$-module.
For each ideal $A$ of $R$, set
\[
A^{*}\ :=\ 0\!:_{E}\!A
\ =\ \{\,q\in E\mid Aq=0\,\},
\qquad
A^{**}\ :=\ 0\!:_{R}\!A^{*}
\ =\ \{\,r\in R\mid rA^{*}=0\,\}.
\]
Then the following conditions are equivalent.
\begin{enumerate}[label=\textup{(\arabic*)}]
\item\label{it:wI41-faith}
$E$ is $w$-faithfully injective.

\item\label{it:wI41-bidual-wideals}
$I=I^{**}$ for every proper $w$-ideal $I$ of $R$.

\item\label{it:wI41-bidual-wmax}
$\fkm=\fkm^{**}$ for every $\fkm\in w\text{\rm-}\Max(R)$.
\end{enumerate}
\end{theorem}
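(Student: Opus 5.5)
The plan is to reduce everything to Theorem~\ref{thm:w-ishi-31}, using three facts. First, since $E$ is a $w$-injective $w$-module, $L(E)=E$ is an injective $R$-module (Corollary~\ref{cor:w-injective}). Second, for an ideal $A$ there is the identification
\[
A^{*}=0:_E A\;\cong\;\Hom_R(R/A,E),\qquad q\mapsto (r+A\mapsto rq),
\]
so that $A^{**}=\Ann_R\big(\Hom_R(R/A,E)\big)$. Third, the inclusion $A\subseteq A^{**}$ always holds trivially. The only nontrivial inclusion to establish is therefore $I^{**}\subseteq I$ in (1)$\Rightarrow$(2).

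\emph{(1)$\Rightarrow$(2).} Let $I$ be a proper $w$-ideal and suppose $r\in I^{**}\setminus I$. First observe that $R/I$ is $\GV$-torsion-free. Indeed, if $Jx\subseteq I$ with $J\in\GV(R)$, then $x\in I_w=I$. Hence the nonzero cyclic submodule $C:=(Rr+I)/I\subseteq R/I$ is also $\GV$-torsion-free. By Theorem~\ref{thm:w-ishi-31}(4) we have $\hom_w(C,E)\neq 0$. Since $E$ is a $w$-module, Proposition~\ref{prop:envelope-basic}(4) gives $\hom_w(C,E)=\Hom_R(C_w,E)\cong\Hom_R(C,E)$, so there is a nonzero $g:C\to E$. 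Because $C$ is generated by $r+I$, this means $g(r+I)\neq 0$. Using injectivity of $E$, extend $g$ to $f:R/I\to E$ and put $q:=f(1+I)$. Then $Iq=0$, so $q\in I^{*}$, while $rq=g(r+I)\neq 0$. This contradicts $r\in I^{**}$. Hence $I^{**}=I$.

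\emph{(2)$\Rightarrow$(3).} This is immediate, since every $\fkm\in w\text{-}\Max(R)$ is a proper $w$-ideal.

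\emph{(3)$\Rightarrow$(1).} By Theorem~\ref{thm:w-ishi-31}(6), it suffices to show $\hom_w(R/\fkm,E)\neq0$ for each $\fkm\in w\text{-}\Max(R)$. As above, $R/\fkm$ is $\GV$-torsion-free and $E$ is a $w$-module, so this group is isomorphic to $\Hom_R(R/\fkm,E)\cong\fkm^{*}$. If $\fkm^{*}=0$, then $\fkm^{**}=0:_R 0=R\neq\fkm$, which contradicts (3). Hence $\fkm^{*}\neq 0$, and $E$ is $w$-faithfully injective.

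The main point requiring care is (1)$\Rightarrow$(2). There we must apply the faithfulness criterion to the $\GV$-torsion-free cyclic module $(Rr+I)/I$ rather than to $R/I$ itself. Doing so lets the nonzero map detect the specific element $r$, and this uses that $I$ is a $w$-ideal to secure $\GV$-torsion-freeness. The other delicate step is identifying $\hom_w(-,E)$ with $\Hom_R(-,E)$ on $\GV$-torsion-free modules, for which I would rely on the unique extension property in Proposition~\ref{prop:envelope-basic}(4).
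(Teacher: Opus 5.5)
Your proof is correct. Its overall structure matches the paper's: the identification $\Hom_R(R/I,E)\cong I^{*}$, the trivial inclusion $I\subseteq I^{**}$, the immediate step (2)$\Rightarrow$(3), and the argument for (3)$\Rightarrow$(1) via $\mathfrak m^{*}\neq 0$ all coincide with the paper.

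The real difference is in (1)$\Rightarrow$(2), and there your version is the better one. The paper applies the cyclic test of Theorem~\ref{thm:w-ishi-31} only to $R/I$ itself and obtains some nonzero $h\colon R/I\to E$. It then asserts that ``since $\overline r\neq 0$ and $h\neq 0$, we may choose $h$ so that $h(\overline r)\neq 0$''. That inference does not follow from the two nonvanishing facts, because a nonzero map can kill a given nonzero element. In fact, since $I^{**}=\Ann_R(\Hom_R(R/I,E))$, the existence of some $h$ with $h(\overline r)\neq 0$ is exactly the statement $r\notin I^{**}$ being proved, so the paper's step assumes what it needs.

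Your argument supplies what is missing, in three moves:
\begin{itemize}
\item you apply Theorem~\ref{thm:w-ishi-31}(4) to the nonzero $\GV$-torsion-free cyclic module $(Rr+I)/I$ (equivalently, condition (5) to the proper $w$-ideal $I:_R r$);
\item you pass from $\hom_w$ to $\Hom_R$ by the unique-extension property of Proposition~\ref{prop:envelope-basic}(4);
\item you use that $E=L(E)$ is an injective $R$-module (Corollary~\ref{cor:w-injective}) to extend the resulting map, which must be nonzero on the generator $r+I$, to all of $R/I$.
\end{itemize}
This costs one extra ingredient: injectivity of $E$ in $\Mod(R)$, not just exactness of $\hom_w(-,E)$. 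In return you get a complete proof of the only nontrivial inclusion $I^{**}\subseteq I$, where the paper gives only a sketch.
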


\begin{proof}
Since $E$ is $w$-injective, the contravariant functor
\[
V(-)\ :=\ \hom_w(-,E)
\]
is exact.

\smallskip
\noindent\emph{Step 0: cyclic identifications.}
For every ideal $I\lhd R$ there is a canonical isomorphism
\begin{equation}\label{eq:wI41-Hom-Istar}\tag{$\ddagger$}
\Hom_R(R/I,E)\ \xrightarrow{\ \sim\ }\ I^{*},
\qquad
f\longmapsto f(\overline{1}),
\end{equation}
whose inverse sends $q\in I^{*}$ to the map
$R/I\to E$, $\overline{r}\mapsto rq$.
Hence
\[
I^{**}=0_R:I^{*}
=\{\,r\in R\mid rI^{*}=0\,\}.
\]

If $I$ is a $w$-ideal, then $R/I$ is $\GV$-torsion-free.
Since $E$ is a $w$-module, we obtain
\begin{equation}\label{eq:wI41-homw-cyclic}\tag{$\star$}
\hom_w(R/I,E)
\ \cong\ 
\Hom_R(R/I,E)
\ \cong\ 
I^{*}.
\end{equation}

\smallskip
\noindent
(1)$\Rightarrow$(2)
Let $I$ be a proper $w$-ideal.
The inclusion $I\subseteq I^{**}$ always holds by definition.

Assume $I\subsetneq I^{**}$.
Choose $r\in I^{**}\setminus I$,
so $\overline r\neq 0$ in $R/I$.

Since $E$ is $w$-faithfully injective,
Theorem~\ref{thm:w-ishi-31} (cyclic test) yields
\[
\hom_w(R/I,E)\neq 0.
\]
Hence, by \eqref{eq:wI41-homw-cyclic}, there exists
$q\in I^{*}$ with $q\neq 0$.
Equivalently, there exists
$h\in\Hom_R(R/I,E)$ such that
$q=h(\overline{1})\neq 0$.

Then
\[
h(\overline r)=r h(\overline 1)=rq.
\]
Since $\overline r\neq 0$ and $h\neq 0$,
we may choose $h$ so that $h(\overline r)\neq 0$.
Thus $rq\neq 0$,
contradicting $r\in I^{**}=0_R:I^{*}$.

Therefore $I^{**}\subseteq I$,
and hence $I=I^{**}$.

\smallskip
\noindent
(2)$\Rightarrow$(3)
Immediate by taking $I=\fkm$
for $\fkm\in w\text{\rm-}\Max(R)$.

\smallskip
\noindent
(3)$\Rightarrow$(1)

Fix $\fkm\in w\text{\rm-}\Max(R)$.
If $\fkm^{**}=\fkm$, then necessarily
$\fkm^{*}\neq 0$;
otherwise $\fkm^{*}=0$ would give
\[
\fkm^{**}
=
0_R:\fkm^{*}
=
R,
\]
contradicting $\fkm^{**}=\fkm\neq R$.

Hence
\[
\Hom_R(R/\fkm,E)
\ \cong\
\fkm^{*}
\ \neq\
0.
\]
Since $R/\fkm$ is $\GV$-torsion-free
(because $\fkm$ is a $w$-ideal),
we obtain
\[
\hom_w(R/\fkm,E)
=
\Hom_R(R/\fkm,E)
\neq
0
\qquad
\text{for all }\fkm\in w\text{\rm-}\Max(R).
\]

By Theorem~\ref{thm:w-ishi-31},
this cyclic nonvanishing is equivalent to
$w$-faithful injectivity of $E$.
\end{proof}

\begin{proposition}
\label{prop:domain-matlis-1}
Let $R$ be an integral domain with quotient field $Q$, and let 
$\fkm\in w\text{\rm-}\Max(R)$.  
Assume that $R_\fkm$ is a discrete valuation ring
(equivalently, a rank-one valuation domain with principal maximal ideal).

Then the $R_\fkm$-module $Q/R_\fkm$ is faithfully injective.
In particular,
\[
E^{(w)}\ :=\ \prod_{\fkm\in w\text{\rm-}\Max(R)}\big(Q/R_\fkm\big)
\]
is a $w$-universal injective $R$-module.

Consequently, for every nonzero ideal $I\lhd R$ and every
$\fkm\in w\text{\rm-}\Max(R)$, the Matlis evaluation map
\[
\theta:\ R_\fkm/I_\fkm\ \longrightarrow\ 
\Hom_{R_\fkm}\!\big(
\Hom_{R_\fkm}(R_\fkm/I_\fkm,\,Q/R_\fkm),\,Q/R_\fkm
\big),
\qquad
a\longmapsto(\varphi\mapsto \varphi(a)),
\]
is injective.
\end{proposition}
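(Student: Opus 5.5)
Fix $\fkm\in w\text{\rm-}\Max(R)$ and set $V:=R_\fkm$, a DVR with uniformizer $t$. The plan has three steps.

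\textbf{Step 1: $Q/V$ is faithfully injective over $V$.} Since $V$ is a PID, it suffices to prove injectivity and faithfulness separately.
\begin{itemize}
\item \emph{Injectivity.} $Q$ is divisible, so its quotient $Q/V$ is divisible. Over a PID, divisible modules are injective.
\item \emph{Faithfulness.} It suffices to show $\Hom_V(k,Q/V)\neq 0$ for the residue field $k=V/tV$. The class of $t^{-1}$ is a nonzero element of $Q/V$ annihilated by $t$, which gives a nonzero map $k\to Q/V$.
\end{itemize}
Injectivity together with nonvanishing on the residue field gives faithful injectivity over a local ring. This is the same local criterion invoked in the proof of Proposition~\ref{prop:w-localization-criterion}; more precisely, we apply Ishikawa's Theorem~3.1 over $V$ via the local case of Theorem~\ref{thm:w-ishi-31} with the trivial operation.

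\textbf{Step 2: $E^{(w)}$ is $w$-universal injective over $R$.} There are three points to check.
\begin{itemize}
\item \emph{Injectivity over $R$.} Each $Q/R_\fkm$ is injective over $R_\fkm$, and $R_\fkm$ is a flat $R$-algebra. Hence $Q/R_\fkm$ is an injective $R$-module, by the adjunction $\Hom_R(-,Q/R_\fkm)\cong\Hom_{R_\fkm}(-\otimes_R R_\fkm,Q/R_\fkm)$. Products of injective modules are injective, so $E^{(w)}$ is injective over $R$.
\item \emph{$\GV$-torsion-freeness.} Suppose $J\in\GV(R)$ kills an element $x$ of $Q/R_\fkm$. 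Since $J\nsubseteq\fkm$, some $s\in J$ becomes a unit in $R_\fkm$, so $x=0$. Therefore $E^{(w)}$ is a $w$-module.
\item \emph{Universality.} By Corollary~\ref{cor:w-faith-injective} together with Theorem~\ref{thm:w-ishi-31}(6), it suffices to show $\Hom_R(R/\fkm,E^{(w)})\neq 0$ for every $\fkm\in w\text{\rm-}\Max(R)$. The $\fkm$-component already gives this: the element $t^{-1}+R_\fkm$ is killed by $\fkm$, because $\fkm R_\fkm=tR_\fkm$.
\end{itemize}

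\textbf{Step 3: injectivity of $\theta$.} Write $D=\Hom_V(-,Q/V)$. Since $Q/V$ is an injective cogenerator of $\Mod(V)$ by Step 1, the evaluation map $N\to D(D(N))$ is injective for every $V$-module $N$. Concretely, for $0\neq a\in N$, faithful exactness of $D$ applied to the inclusion $Va\hookrightarrow N$ produces $\varphi$ with $\varphi(a)\neq0$. Taking $N=R_\fkm/I_\fkm$ proves the claim, including the degenerate case $I_\fkm=R_\fkm$.

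\textbf{Main obstacle.} The only subtle point is Step 1's passage from ``injective and $\Hom(k,-)\neq0$'' to faithfulness. I would justify it by the classical argument: every nonzero module over the local ring $V$ has a nonzero cyclic submodule surjecting onto $k$. Then injectivity of $Q/V$ extends the nonzero map $k\to Q/V$ back to the whole module.
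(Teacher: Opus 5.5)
Your proof is correct and follows the same three-step outline as the paper's: faithful injectivity of $Q/R_\fkm$ over the DVR, then injectivity of the product together with the cyclic test at each $R/\fkm$, then the cogenerator argument for $\theta$. Two of your sub-steps are in fact sounder than the paper's. First, you get $R$-injectivity of $Q/R_\fkm$ by flat base change, whereas the paper infers it from ``divisible over the domain $R$, hence injective,'' which is not a valid inference over a general (non-Dedekind) domain. Second, you verify $\GV$-torsion-freeness of $E^{(w)}$ and invoke Theorem~\ref{thm:w-ishi-31}(6) directly, which avoids the paper's appeal to Corollary~\ref{cor:support-test-w-UI}; that corollary carries a $w$-Noetherian hypothesis that is not assumed here.
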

\begin{proof}
Fix $\fkm\in w\text{\rm-}\Max(R)$ and consider $Q/R_\fkm$
as an $R_\fkm$-module.

Since $R_\fkm$ is a discrete valuation ring, it is a principal ideal domain.
Over a principal ideal domain, the injective hull of the residue field
is isomorphic to $Q/R_\fkm$, and $Q/R_\fkm$ is divisible.
Hence, by classical Matlis theory, $Q/R_\fkm$ is an injective cogenerator
over $R_\fkm$.

In particular, for every nonzero ideal $J\lhd R_\fkm$ one has
\[
\Hom_{R_\fkm}(R_\fkm/J,\,Q/R_\fkm)\ \neq\ 0,
\]
which shows that $Q/R_\fkm$ is faithfully injective as an
$R_\fkm$-module.

Now define
\[
E^{(w)}\ :=\ \prod_{\fkm\in w\text{\rm-}\Max(R)}(Q/R_\fkm).
\]

Each $Q/R_\fkm$ is divisible over the domain $R$,
hence injective as an $R$-module.
Therefore $E^{(w)}$, being a product of injective $R$-modules,
is injective over $R$.

For each $\fkm\in w\text{\rm-}\Max(R)$ we have
\[
\Hom_R(R/\fkm,\,E^{(w)})
\ \cong\
\Hom_{R_\fkm}(R_\fkm/\fkm R_\fkm,\,Q/R_\fkm)
\ \neq\ 0.
\]
Thus every $w$-maximal ideal lies in the support of $E^{(w)}$.
By Corollary~\ref{cor:support-test-w-UI},
$E^{(w)}$ is $w$-universal injective.

Finally, let $0\neq I\lhd R$ and fix $\fkm\in w\text{\rm-}\Max(R)$.
Working over the local ring $R_\fkm$, the faithful injectivity of
$Q/R_\fkm$ implies that for every nonzero element
$a\in R_\fkm/I_\fkm$ there exists
\[
\varphi\in \Hom_{R_\fkm}(R_\fkm/I_\fkm,\,Q/R_\fkm)
\quad\text{with}\quad \varphi(a)\neq 0.
\]
Hence $\theta(a)\neq 0$, and therefore the Matlis evaluation map
$\theta$ is injective.
\end{proof}

\paragraph{{\bf  Funding}}   H. Kim was supported by the Basic Science Research Program through the National Research Foundation of Korea (NRF), funded by the Ministry of Education (2021R1I1A3047469).

\end{document}